  \theoremstyle{plain}
    \newtheorem{thm}{Theorem}[section]
    \newtheorem{prop}[thm]{Proposition}
    \newtheorem{subsec}[thm]{}
\theoremstyle{definition}
    \newtheorem{defn}[thm]{Definition}
        \newtheorem{remark}[thm]{Remark}
    \newtheorem{exam}[thm]{Example}
\theoremstyle{remark}
\title{}
\author{}
\date{}
\begin{document}

\title[Compatible $\mathcal {O}$-operators and compatible dendriform algebras]{$L_\infty$-structures and cohomology theory of compatible $\mathcal {O}$-operators and compatible dendriform algebras}



\author{Apurba Das}
\address{Department of Mathematics,
Indian Institute of Technology, Kharagpur 721302, West Bengal, India.}
\email{apurbadas348@gmail.com}

\author{Shuangjian Guo}
\address{School of Mathematics and Statistics, Guizhou University of Finance and Economics, Guiyang  550025, China.} 
\email{shuangjianguo@126.com}

\author{Yufei Qin}
\address{School of Mathematical Sciences,  Shanghai Key Laboratory of PMMP,  East China Normal University, Shanghai 200241, China.}
\email{290673049@qq.com}

\maketitle







\begin{abstract}
The notion of $\mathcal{O}$-operator is a generalization of Rota-Baxter operator in the presence of a bimodule over an associative algebra. A compatible $\mathcal{O}$-operator is a pair consisting of two $\mathcal{O}$-operators satisfying a compatibility relation. A compatible $\mathcal{O}$-operator algebra is an algebra together with a bimodule and a compatible $\mathcal{O}$-operator. In this paper, we construct a graded Lie algebra and an $L_\infty$-algebra that respectively characterize compatible $\mathcal{O}$-operators and compatible $\mathcal{O}$-operator algebras as Maurer-Cartan elements. Using these characterizations, we define cohomology of these structures and as applications we study formal deformations of compatible $\mathcal{O}$-operators and compatible $\mathcal{O}$-operator algebras. 
Finally, we consider a brief cohomological study of compatible dendriform algebras and find their relation with the cohomology of compatible associative algebras and compatible $\mathcal{O}$-operators.
\end{abstract}

\medskip

\medskip

\noindent { 2020 MSC classification.}  16E40, 16S80, 17B38, 18N40.

\noindent { Keywords.} Compatible $\mathcal{O}$-operators, $L_\infty$-algebras,  cohomology, deformations, compatible dendriform algebras.

\medskip


\thispagestyle{empty}

\tableofcontents



\section{Introduction}

Rota-Baxter operators are algebraic abstraction of the integral operator. They first appeared in a work of Baxter in the fluctuation theory of probability \cite{B60}. Subsequently, such operators were studied by Rota \cite{R-rota}, Cartier \cite{cartier}, Atkinson \cite{atkinson} among others. Rota-Baxter operators appeared in many contexts of mathematics and mathematical physics. For instance, they appeared in the combinatorial study of shuffle algebras \cite{guo-free-rota}, splitting of algebras and operads \cite{BBGN13}, theory of bialgebras \cite{aguiar-pre-dend} and renormalizations in quantum field theory \cite{CK00}. In \cite{U08} Uchino introduced a notion of generalized Rota-Baxter operator, also known as $\mathcal{O}$-operator or relative Rota-Baxter operator (mostly known as $\mathcal{O}$-operator in the literature \cite{bgn,D20}). Let $A$ be an associative algebra and $M$ be an $A$-bimodule. We write $~\cdot~$ to denote the associative multiplication on $A$ as well as both the left and right $A$-actions on $M$. A linear map $T: M \rightarrow A$ is an $\mathcal{O}$-operator on $M$ over the algebra $A$ if the map $T$ satisfies
\begin{align*}
T(u) \cdot T(v) = T (T(u) \cdot v + u \cdot T(v)), ~ \text{for } u, v \in M.
\end{align*}
A Rota-Baxter operator on $A$ is an $\mathcal{O}$-operator on $A$ (consider as the adjoint bimodule space) over the algebra $A$ itself. Uchino observed that $\mathcal{O}$-operators can be seen as a noncommutative analogue of Poisson structures. More precisely, a skew-symmetric element $r \in A \otimes A$ is an associative Yang-Baxter solution (noncommutative analogue of Poisson tensor) if and only if the map $r^\sharp : A^* \rightarrow A$ is an $\mathcal{O}$-operator on $A^*$ (consider as the coadjoint bimodule space) over the algebra $A$. He also observed that an $\mathcal{O}$-operator induces a dendriform algebra structure. Rota-Baxter operators and $\mathcal{O}$-operators are also well studied in the context of Lie algebras \cite{S83,kuper}. Recently, cohomology and deformation theory of Rota-Baxter operators and $\mathcal{O}$-operators are settled in \cite{shengO,D20}. See \cite{G12} for more results on $\mathcal{O}$-operators.

\medskip

In the mathematical study of biHamiltonian mechanics, two Poisson structures on a manifold are said to be compatible if their sum also defines a Poisson structure \cite{yks-mag}. Compatible Poisson structures are closely related to compatible Lie algebras via dualization \cite{dual1,dual2}. Our aim in this paper is to study the noncommutative analogue of compatible Poisson structures. They are given by compatible $\mathcal{O}$-operators introduced in \cite{LSB19}. A compatible $\mathcal{O}$-operator is a pair $(T_1, T_2)$ in which $T_1, T_2$ are both $\mathcal{O}$-operators satisfying an additional compatibility which is equivalent to the fact that $T_1 + T_2$ is also an $\mathcal{O}$-operator. We call the triple $(A,M,(T_1,T_2))$ consisting of an algebra $A$, an $A$-bimodule $M$ and a compatible $\mathcal{O}$-operator $(T_1, T_2)$ as a `compatible   $\mathcal{O}$-operator algebra'. We introduce (skew-symmetric) compatible associative Yang-Baxter solutions and show that they induce compatible $\mathcal{O}$-operators. 

\medskip

Next, we focus on the cohomology of compatible $\mathcal{O}$-operators and compatible $\mathcal{O}$-operator algebras. Given an associative algebra $A$ and an $A$-bimodule $M$, we first construct a graded Lie algebra whose Maurer-Cartan elements are precisely compatible $\mathcal{O}$-operators. Using this characterization, we define the cohomology of a compatible $\mathcal{O}$-operator $(T_1, T_2)$. On the other hand, we observed that a compatible $\mathcal{O}$-operator induces a compatible associative algebra and a compatible bimodule over it. The cohomology of the induced compatible associative algebra is isomorphic to the cohomology of the compatible $\mathcal{O}$-operator $(T_1, T_2)$. We show that there is a morphism from the cohomology of a compatible $\mathcal{O}$-operator $(T_1,T_2)$ to the cohomology of the $\mathcal{O}$-operator $T_1 + T_2$. As an application of our cohomology, we study formal deformations of the compatible $\mathcal{O}$-operator $(T_1,T_2)$ by keeping the underlying algebra $A$ and the $A$-bimodule $M$ intact. 
We also consider finite order deformations of a compatible $\mathcal{O}$-operator. Given a finite order deformation, we associate a $2$-cocycle, whose cohomology class is called the `obstruction class'. The given deformation is extensible to a deformation of next order if and only if the corresponding obstruction class vanishes. Next, following the approach of \cite{DasMishra} in the context of compatible structures, we construct an $L_\infty$-algebra whose Maurer-Cartan elements are compatible $\mathcal{O}$-operator algebras. Hence we may define the cohomology of a compatible $\mathcal{O}$-operator algebra $(A,M, (T_1,T_2))$. Using such cohomology, we study formal deformations of the compatible $\mathcal{O}$-operator algebra $(A,M, (T_1,T_2))$, where we allow to deform the underlying algebra $A$, the $A$-bimodule $M$ and the compatible $\mathcal{O}$-operator $(T_1, T_2)$. 



\medskip

In the last part, we consider compatible dendriform algebras introduced in \cite{LSB19}. Like $\mathcal{O}$-operators are related with dendriform algebras, compatible $\mathcal{O}$-operators are related with compatible dendriform algebras.
By following the cohomology of dendriform algebras given in \cite{Das11802,lod-val}, here we define cohomology of compatible dendriform algebras. Our cohomology of compatible dendriform algebras can be seen as a splitting of the cohomology of compatible associative algebras recently introduced in \cite{cdm}. Finally, we find a relation among the cohomology of a compatible $\mathcal{O}$-operator to the cohomology of the induced compatible dendriform algebra.

\medskip

The paper is organized as follows. In Section \ref{sec-comp-o}, we consider compatible $\mathcal{O}$-operators and find their relation with compatible associative Yang-Baxter solutions. Maurer-Cartan characterization and cohomology theory of compatible $\mathcal{O}$-operators and compatible $\mathcal{O}$-operator algebras are defined in Section \ref{sec-coho}. In Section \ref{sec-def}, we study formal deformation theory of compatible $\mathcal{O}$-operators and compatible $\mathcal{O}$-operator algebras. Finally, cohomology theory of compatible dendriform algebras, and their relations with some known cohomologies are given in Section \ref{sec-comp-dend}.

\medskip

Throughout this paper, all (graded) vector spaces, linear and multilinear maps, tensor products are over a field {\bf k} of characteristic $0$.

\medskip

\medskip

\section{Compatible $\mathcal{O}$-operators and compatible associative Yang-\\Baxter solutions}\label{sec-comp-o}
In this section, we first consider compatible $\mathcal{O}$-operators introduced in \cite{LSB19} and then define compatible associative Yang-Baxter solutions. We show that (skew-symmetric) compatible associative Yang-Baxter solutions give rise to compatible $\mathcal{O}$-operators.

\begin{defn}
Let $A$ be an associative algebra and $M$ be an $A$-bimodule.
A compatible $\mathcal{O}$-operator on $M$ over the algebra $A$ consists of a pair $(T_1, T_2)$ of $\mathcal{O}$-operators satisfying the compatibility
\begin{align}
T_1(u) \cdot T_2(v) + T_2(u) \cdot T_1(v)= T_1(T_2(u) \cdot v + u \cdot T_2(v)) + T_2 (T_1(u) \cdot v + u \cdot T_1(v)), \text{ for } u, v \in M.
\end{align}
A triple $(A, M, (T_1, T_2))$ consisting of an algebra $A$, an $A$-bimodule $M$ and a compatible $\mathcal{O}$-operator $(T_1, T_2)$ on $M$ over the algebra $A$ is called a `compatible $\mathcal{O}$-operator algebra'. As $\mathcal{O}$-operator algebras are also called relative Rota-Baxter algebras, hence `compatible $\mathcal{O}$-operator algebras' may be called `compatible relative Rota-Baxter algebras'.
\end{defn}

It follows from the above compatibility that $\lambda T_1 + \eta T_2$ is an $\mathcal{O}$-operator, for any $\lambda, \eta \in {\bf k}$. In particular, the sum $T_1 + T_2$ is an $\mathcal{O}$-operator, called the `total $\mathcal{O}$-operator'.

\begin{exam}
Let $T:M\rightarrow A$  be an $\mathcal{O}$-operator.  Then $\widetilde{T} := -T$ is also an $\mathcal{O}$-operator and the pair $(T, \widetilde{T})$ is a compatible $\mathcal{O}$-operator.
\end{exam}

\begin{exam}
Let $A =C^1([0, 1])$ and $M=C^0([0, 1])$. Then $A$ is an associative algebra and $M$ be an $A$-bimodule. Let $T_1, T_2 : M \rightarrow A$ be defined by
\begin{eqnarray*}
 T_1(f)(x)=\int^x_0 k_1(t)f(t) dt ~~ \text{ and } ~~ T_2(f)(x)=\int^x_0 k_2(t)f(t) dt,
\end{eqnarray*}
for some functions $k_1$ and $k_2$ in $C^0([0,1])$.
Then the pair $(T_1, T_2)$ is a compatible $\mathcal{O}$-operator.
\end{exam}

\begin{exam}
Let $d_1, d_2 : A \rightarrow M$ be two derivations that are invertible and satisfying the compatibility
\begin{align*}
d_1^{-1} d_2 (a \cdot b) + d_2^{-1} d_1 (a' \cdot b') = a \cdot b' + a' \cdot b, \text{ for } a, b \in A.
\end{align*}
Then it is easy to check that $(T_1 = d_1^{-1}, T_2 = d_2^{-1})$ is a compatible $\mathcal{O}$-operator.
\end{exam}

\begin{defn}
Let $(T_1, T_2)$ and $(T_1', T_2')$ be two compatible $\mathcal{O}$-operators on $M$ over the algebra $A$. A morphism of compatible $\mathcal{O}$-operators from $(T_1, T_2)$ to $(T_1', T_2' )$  is given by a pair $(\phi, \psi)$ of an algebra morphism $\phi : A \rightarrow A$ and a linear map $\psi : M \rightarrow M$ satisfying
\begin{align*}
\phi \circ T_1 = T'_1 \circ \psi, ~~
\phi \circ T_2 = T'_2 \circ \psi, ~~
\psi(a \cdot u) =  \phi(a) \cdot \psi(u) ~~ \text{ and } ~~
\psi(u \cdot a) = \psi(u) \cdot \phi(a), \text{ for } a \in A, u \in M.
\end{align*}
\end{defn}

\medskip

It is well-known \cite{U08} that an $\mathcal{O}$-operator $T: M \rightarrow A$ induces an associative algebra structure on $M$  given by
\begin{align*}
u \star v = T(u) \cdot v + u \cdot T(v), \text{ for } u, v \in M.
\end{align*}
Moreover, there is an $M$-bimodule structure on $A$ with {left and right} $M$-actions
\begin{align*}
l_T : M \otimes A \rightarrow A, ~ l_T (u, a) = T(u) \cdot a - T (u \cdot a) ~~ \text{ and } ~~
r_T : A \otimes M \rightarrow A, ~ r_T (a, u) = a \cdot T(u) - T ( a \cdot u).
\end{align*}

\medskip

In the following, we will generalize these results in the context of compatible $\mathcal{O}$-operators. First, recall that a compatible associative algebra is a triple $(A, \cdot_1, \cdot_2)$ in which $(A, \cdot_1)$ and $(A, \cdot_2)$ are both associative algebras satisfying additionally
\begin{align}\label{ass-comp}
(a \cdot_1 b ) \cdot_2 c + (a \cdot_2 b ) \cdot_1 c = a \cdot_2 (b \cdot_1 c ) + a \cdot_1 (b \cdot_2 c ), \text{ for } a, b, c \in A.
\end{align}
Like an associative algebra gives rise to a Lie algebra via skew-symmetrization, a compatible associative algebra gives rise to a compatible Lie algebra via skew-symmetrization of both associative products. See \cite{cdm} for further relation between compatible associative and Lie algebras.

Let $(A, \cdot_1, \cdot_2)$ be a compatible associative algebra. A compatible bimodule over it consists of a quintuple $(M, l_1, r_1, l_2, r_2)$ of a vector space $M$ with four bilinear operations
\begin{align*}
l_1 : A \otimes M &\rightarrow M,  \quad \quad r_1 : M \otimes A \rightarrow A, \quad \quad l_2 : A \otimes M \rightarrow M, \quad \quad r_2 : M \otimes A \rightarrow A \\
(a,u) &\mapsto a \cdot_1 u  ~~~~ \quad \qquad \qquad ~ (u,a) \mapsto u \cdot_1 a ~~~ \quad \qquad (a,u) \mapsto a \cdot_2 u    \qquad \qquad  (u,a) \mapsto u \cdot_2 a
\end{align*}
so that $(M, l_1, r_1)$ is a bimodule over the associative algebra $(A, \cdot_1)$, and $(M, l_2, r_2)$ is a bimodule over the associative algebra $(A, \cdot_2)$ satisfying additionally for any $a, b \in A$ and $u \in M$,
\begin{align*}
( a \cdot_1 b ) \cdot_2 u  + ( a \cdot_2 b ) \cdot_1 u  =~& a \cdot_1 ( b \cdot_2 u) + a \cdot_2 ( b \cdot_1 u),\\
( a \cdot_1 u ) \cdot_2 b  + ( a \cdot_2 u ) \cdot_1 b  =~& a \cdot_1 ( u \cdot_2 b) + a \cdot_2 ( u \cdot_1 b),\\
( u \cdot_1 a ) \cdot_2 b  + ( u \cdot_2 a ) \cdot_1 b  =~& u \cdot_1 ( a \cdot_2 b) + u \cdot_2 ( a \cdot_1 b).
\end{align*}
These three compatibility conditions is equivalent to the fact that $(M, l_1 + l_2, r_1 + r_2)$ is a bimodule over the associative algebra $(A, \cdot_1 + \cdot_2)$.
Note that any compatible associative algebra $(A, \cdot_1, \cdot_2)$ is a bimodule over itself with $l_1 = r_1 = \cdot_1$ and $l_2 = r_2 = \cdot_2$. This is called adjoint compatible bimodule.


\begin{thm}\label{induced-comp-th}
Let $A$ be an associative algebra and $M$ be an $A$-bimodule. Let $(T_1, T_2)$ be a compatible $\mathcal{O}$-operator on $M$ over the algebra $A$. Then $(M, \star_1, \star_2)$ is a compatible associative algebra, where
\begin{align*}
u \star_1 v = T_1(u) \cdot v + u \cdot T_1 (v) ~~~ \text{ and } ~~~ u \star_2 v = T_2(u) \cdot v + u \cdot T_2 (v), \text{ for } u, v \in M.
\end{align*}
Moreover, $(A, l_{T_1}, r_{T_1}, l_{T_2}, r_{T_2})$ is a compatible bimodule over the compatible associative algebra $(M, \star_1, \star_2)$.
\end{thm}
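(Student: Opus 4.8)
The plan is to reduce everything to the single-operator construction recalled just before the theorem, applied to $T_1$, to $T_2$, and to the total $\mathcal{O}$-operator $T_1 + T_2$; this avoids any direct expansion of the $\mathcal{O}$-operator compatibility relation, which would be the tedious route.

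For the compatible associative algebra, note first that since $T_1$ and $T_2$ are each $\mathcal{O}$-operators, the recalled construction shows at once that $(M, \star_1)$ and $(M, \star_2)$ are associative. To obtain the compatibility \eqref{ass-comp}, I would observe that the associative product induced by the total $\mathcal{O}$-operator $T_1 + T_2$ is exactly $\star_1 + \star_2$, because $(T_1 + T_2)(u) \cdot v + u \cdot (T_1 + T_2)(v) = u \star_1 v + u \star_2 v$; hence $(M, \star_1 + \star_2)$ is associative as well. The compatibility is then purely formal: expanding the associator of $\star_1 + \star_2$ and subtracting the vanishing associators of $\star_1$ and of $\star_2$ leaves precisely $(u \star_1 v) \star_2 w + (u \star_2 v) \star_1 w - u \star_2 (v \star_1 w) - u \star_1 (v \star_2 w)$, which must therefore vanish. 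This is the standard polarization argument that two associative products whose sum is again associative are automatically compatible.

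For the compatible bimodule, the recalled construction applied to $T_1$ and to $T_2$ gives that $(A, l_{T_1}, r_{T_1})$ and $(A, l_{T_2}, r_{T_2})$ are bimodules over $(M, \star_1)$ and $(M, \star_2)$ respectively. The key point, immediate from linearity of $T_1$ and $T_2$, is that the actions attached to the total operator add: $l_{T_1 + T_2} = l_{T_1} + l_{T_2}$ and $r_{T_1 + T_2} = r_{T_1} + r_{T_2}$. Applying the recalled construction to $T_1 + T_2$ then shows $(A, l_{T_1} + l_{T_2}, r_{T_1} + r_{T_2})$ is a bimodule over $(M, \star_1 + \star_2)$. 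By the equivalence recorded before the theorem---that, given two compatible bimodule structures, the three compatibility conditions hold if and only if the summed actions form a bimodule over the summed algebra---this is exactly what is needed to conclude that $(A, l_{T_1}, r_{T_1}, l_{T_2}, r_{T_2})$ is a compatible bimodule over $(M, \star_1, \star_2)$.

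Once the total-operator trick is in place there is no real obstacle; the only points requiring care are to check that the induced product and the induced actions genuinely decompose additively across $T_1 + T_2$, and to invoke the two equivalences (associativity of the sum $\Leftrightarrow$ compatibility, and bimodule over the sum $\Leftrightarrow$ the three compatibility conditions) in the correct direction. I would flag the polarization step as the conceptual heart of the argument, since it is what converts the single associativity/module identity for the total structure into the compatibility axioms.
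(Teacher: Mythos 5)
Your proposal is correct and follows essentially the same route as the paper's proof: reduce everything to the total $\mathcal{O}$-operator $T_1 + T_2$, observe that the induced product and the induced actions are additive (i.e.\ $\star_1 + \star_2$, $l_{T_1} + l_{T_2}$ and $r_{T_1} + r_{T_2}$ are exactly the structures induced by $T_1 + T_2$), and invoke the equivalence between compatibility and the summed structure being an associative algebra, respectively a bimodule. The only difference is presentational: the paper leaves the polarization step implicit, while you spell it out.
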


\begin{proof}
For any $u, v \in M$, we observe that
\begin{align}\label{sum-star}
u \star_1 v + u \star_2 v = (T_1 + T_2)(u) \cdot v + u \cdot (T_1 + T_2) (v).
\end{align}
Since $(T_1, T_2)$ is a compatible $\mathcal{O}$-operator, it follows that $T_1 + T_2$ is an $\mathcal{O}$-operator. Hence from (\ref{sum-star}), we conclude that $(M, \star_1 + \star_2)$ is an associative algebra. In other words, $(M, \star_1, \star_2)$ is a compatible associative algebra.

Similarly, for any $u \in M$ and $a \in A$, we see that
\begin{align*}
(l_{T_1} + l_{T_2}) (u, a) = l_{T_1 + T_2} (u, a) ~~~~ \text{ and } ~~~~ (r_{T_1} + r_{T_2}) (a,u) = r_{T_1 + T_2} ( a, u).
\end{align*}
This shows that $(A, l_{T_1}, r_{T_1}, l_{T_2}, r_{T_2})$ is a bimodule over the compatible associative algebra $(M, \star_1, \star_2)$.
\end{proof}

\medskip

\noindent {\bf Compatible associative Yang-Baxter solutions.} Let $A$ be an associative algebra. For any element $r =\sum r^{[1]}\otimes r^{[2]} \in  A\otimes A$, Aguiar \cite{A99} considers the associative Yang-Baxter equation on $A$ as
\begin{align}\label{aybe-eqn}
r^{13}r^{12}-r^{12}r^{23} + r^{23}r^{13} = 0,
\end{align}
or, more explicitly,
\begin{align*}
\sum r^{[1]}\cdot \widetilde{r}^{[1]}\otimes \widetilde{r}^{[2]}\otimes r^{[2]}-\sum r^{[1]}\otimes r^{[2]}\cdot \widetilde{r}^{[1]}\otimes \widetilde{r}^{[2]}+\sum  \widetilde{r}^{[1]}\otimes r^{[1]}\otimes r^{[2]}\cdot \widetilde{r}^{[2]}=0.
\end{align*}
A solution of (\ref{aybe-eqn}) is called an associative Yang-Baxter solution on $A$. If $r = \sum r^{[1]}\otimes r^{[2]} \in  A\otimes A$ is an associative Yang-Baxter solution, then the map $T: A \rightarrow A$ defined by $T(a) = \sum r^{[1]} \cdot a \cdot r^{[2]}$ is a Rota-Baxter operator on $A$ (i.e. an $\mathcal{O}$-operator on the adjoint bimodule space $A$ over the algebra $A$) \cite{A99,aguiar-pre-dend}. Later, Uchino \cite{U08} observed that skew-symmetric associative Yang-Baxter solutions  give rise to $\mathcal{O}$-operators on the coadjoint bimodule space $A^*$ over the algebra $A$. We will generalize these results in the context of compatible $\mathcal{O}$-operators.

\begin{defn}
Let $A$ be an associative algebra. A compatible associative Yang-Baxter solution on $A$ is a pair $(r_1, r_2)$, where $r_1 = \sum r_1^{[1]}\otimes r_1^{[2]}$ and $r_2 = \sum r_2^{[1]}\otimes r_2^{[2]}$ are both associative Yang-Baxter solutions on $A$ satisfying
\begin{align}
r_1^{13}r_2^{12} + r_2^{13}r_1^{12} -r_1^{12}r_2^{23} - r_2^{12}r_1^{23} + r_1^{23}r_2^{13} + r_2^{23}r_1^{13} = 0,
\end{align}
equivalently,
\begin{align}\label{comp-aybe}
\sum r_1^{[1]} \cdot r_2^{[1]} \otimes r_2^{[2]} \otimes r_1^{[2]} +~& \sum r_2^{[1]} \cdot r_1^{[1]} \otimes r_1^{[2]} \otimes r_2^{[2]} - \sum r_1^{[1]} \otimes r_1^{[2]} \cdot r_2^{[1]} \otimes r^{[2]}_2 - \sum r_2^{[1]} \otimes r_2^{[2]} \cdot r_1^{[1]} \otimes r^{[2]}_1 \nonumber \\
+~& \sum  r_2^{[1]} \otimes r_1^{[1]} \otimes r_1^{[2]} \cdot r_2^{[2]} + \sum r_1^{[1]} \otimes r_2^{[1]} \otimes r_2^{[2]} \cdot r_1^{[2]} = 0.
\end{align}
\end{defn}
\begin{exam}
 If $r = \sum r^{[1]}\otimes r^{[2]} \in  A\otimes A$ is an associative Yang-Baxter solution,  then the pair $(r, -r)$ is a  compatible associative Yang-Baxter solution on $A$.
\end{exam}

\begin{prop}
Let $A$ be an associative algebra and $(r_1, r_2)$ be a compatible associative Yang-Baxter solution on $A$. Then the pair $(T_1,T_2)$ of linear maps $T_1, T_2 : A \rightarrow A$ forms a compatible Rota-Baxter operator on $A$ (i.e. compatible $\mathcal{O}$-operator on the adjoint bimodule space $A$ over the algebra $A$), where $ T_1 (a) = \sum r_1^{[1]} \cdot a \cdot r_1^{[2]}$ and $ T_2 (a) = \sum r_2^{[1]} \cdot a \cdot r_2^{[2]}$, for $a \in A$.
\end{prop}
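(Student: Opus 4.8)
The plan is to establish the two defining properties of a compatible $\mathcal{O}$-operator separately: that $T_1$ and $T_2$ are each Rota-Baxter operators, and that the pair $(T_1, T_2)$ satisfies the compatibility identity on the adjoint bimodule. The first property requires no new work. Since $r_1$ and $r_2$ are individually associative Yang-Baxter solutions, the classical result recalled above (the map $a \mapsto \sum r^{[1]} \cdot a \cdot r^{[2]}$ attached to an AYBE solution is a Rota-Baxter operator, due to Aguiar \cite{A99}) immediately shows that both $T_1$ and $T_2$ are Rota-Baxter operators on $A$. Hence the entire content of the proposition lies in deriving the compatibility identity
\[
T_1(a) \cdot T_2(b) + T_2(a) \cdot T_1(b) = T_1(T_2(a) \cdot b + a \cdot T_2(b)) + T_2(T_1(a) \cdot b + a \cdot T_1(b))
\]
from the compatible associative Yang-Baxter equation (\ref{comp-aybe}).

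The main idea is to introduce, for fixed $a, b \in A$, the linear evaluation map $\mu_{a,b} : A \otimes A \otimes A \rightarrow A$ defined on simple tensors by $\mu_{a,b}(x \otimes y \otimes z) = x \cdot a \cdot y \cdot b \cdot z$, which inserts $a$ between the first and second factors and $b$ between the second and third factors. The key observation is that each of the six terms of the compatibility identity is precisely the image under $\mu_{a,b}$ of one of the six tensor summands of (\ref{comp-aybe}). For instance, substituting the definitions gives $T_1(a) \cdot T_2(b) = \sum r_1^{[1]} \cdot a \cdot r_1^{[2]} \cdot r_2^{[1]} \cdot b \cdot r_2^{[2]}$, which is $\mu_{a,b}$ applied to $\sum r_1^{[1]} \otimes (r_1^{[2]} \cdot r_2^{[1]}) \otimes r_2^{[2]}$, the tensor appearing (with its minus sign) as the third summand of (\ref{comp-aybe}). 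Similarly, $T_1(T_2(a) \cdot b) = \sum r_1^{[1]} \cdot r_2^{[1]} \cdot a \cdot r_2^{[2]} \cdot b \cdot r_1^{[2]}$ is $\mu_{a,b}$ applied to the first summand $\sum r_1^{[1]} \cdot r_2^{[1]} \otimes r_2^{[2]} \otimes r_1^{[2]}$, and the remaining four terms match analogously.

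Carrying out this matching for all six terms, I would verify that applying $\mu_{a,b}$ to the left-hand side of (\ref{comp-aybe}) produces exactly
\[
-\, T_1(a) \cdot T_2(b) - T_2(a) \cdot T_1(b) + T_1(T_2(a) \cdot b + a \cdot T_2(b)) + T_2(T_1(a) \cdot b + a \cdot T_1(b)).
\]
Since the left-hand side of (\ref{comp-aybe}) vanishes and $\mu_{a,b}$ is linear, this expression is zero, which is precisely the desired compatibility condition. Together with the Rota-Baxter property of each $T_i$, this completes the verification that $(T_1, T_2)$ is a compatible $\mathcal{O}$-operator on the adjoint bimodule.

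The routine but delicate part, and the main obstacle, is the bookkeeping of the six terms: one must track carefully which tensor factor of each summand in (\ref{comp-aybe}) receives the insertion of $a$ and of $b$, and confirm that the signs in (\ref{comp-aybe}) align so that the two minus-sign summands reproduce the left-hand side $T_1(a) \cdot T_2(b) + T_2(a) \cdot T_1(b)$ while the four plus-sign summands reproduce the right-hand side. Once the correct evaluation map $\mu_{a,b}$ is identified, no further algebraic input is needed, since the proposition reduces to the single observation that $\mu_{a,b}$ carries the compatible AYBE tensor onto the compatibility defect of $(T_1, T_2)$.
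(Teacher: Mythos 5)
Your proposal is correct and follows essentially the same route as the paper: the paper's proof likewise invokes Aguiar's result for the individual Rota-Baxter property and then obtains the compatibility identity by ``replacing the first tensor product by $a$ and the second tensor product by $b$'' in (\ref{comp-aybe}), which is exactly your evaluation map $\mu_{a,b}$. Your term-by-term matching of the six summands agrees with the identity displayed in the paper's proof, so there is no gap.
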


\begin{proof}
Note that, by replacing the first tensor product by $a$ and the second tensor product by $b$ in the identity (\ref{comp-aybe}), we get that
\begin{align*}
T_1 (T_2 (a) \cdot b) + T_2 (T_1 (a) \cdot b) - T_1 (a) \cdot T_2 (b) -  T_2 (a) \cdot T_1 (b) + T_2 (a \cdot T_1 (b) ) + T_1 (a \cdot T_2 (b) ) = 0.
\end{align*}
This proves the compatibility condition among $T_1$ and $T_2$. Hence $(T_1,T_2)$ is a compatible Rota-Baxter operator on $A$.
\end{proof}

Recall that an element $r = \sum r^{[1]} \otimes r^{[2]} \in A \otimes A$ is said to be skew-symmetric if $ r^\tau = - r $, where $r^\tau = \sum r^{[2]} \otimes r^{[1]}$. Then we have the following.


\begin{prop}
Let $A$ be an associative algebra and $(r_1, r_2)$ be a pair of skew-symmetric elements in $A \otimes A$. Let $r_1 = \sum r_1^{[1]} \otimes r_1^{[2]}$ and $r_2 = \sum r_2^{[1]} \otimes r_2^{[2]}$. Then $(r_1, r_2)$ is a compatible associative Yang-Baxter solution if and only if the pair $(r_1^\sharp, r_2^\sharp)$ of maps $r_1^\sharp, r_2^\sharp : A^* \rightarrow A$ forms a compatible $\mathcal{O}$-operator on the coadjoint bimodule space $A^*$ over the algebra $A$, where
\begin{align*}
r_1^\sharp (f) = \sum f (r_1^{[2]}) r_1^{[1]} ~~~ \text{ and } ~~~ r_2^\sharp (f) = \sum f (r_2^{[2]}) r_2^{[1]} , \text{ for } f \in A^*.
\end{align*}
\end{prop}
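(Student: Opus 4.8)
The plan is to reduce the statement to the single-operator case plus a careful analysis of the mixed equation (\ref{comp-aybe}). Recall that the coadjoint $A$-bimodule structure on $A^*$ is $(a \cdot f)(b) = f(b \cdot a)$ and $(f \cdot a)(b) = f(a \cdot b)$ for $a,b \in A$, $f \in A^*$. Being a compatible associative Yang-Baxter solution splits into three demands: that $r_1$ and $r_2$ are each associative Yang-Baxter solutions, and that the mixed identity (\ref{comp-aybe}) holds; dually, $(r_1^\sharp, r_2^\sharp)$ is a compatible $\mathcal{O}$-operator exactly when $r_1^\sharp, r_2^\sharp$ are each $\mathcal{O}$-operators and they satisfy the compatibility relation. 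By Uchino's result \cite{U08} for skew-symmetric solutions (recalled above), $r_i$ is an associative Yang-Baxter solution if and only if $r_i^\sharp$ is an $\mathcal{O}$-operator on $A^*$, for $i=1,2$. Hence it remains only to prove that (\ref{comp-aybe}) is equivalent to the compatibility relation for the pair $(r_1^\sharp, r_2^\sharp)$.

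First I would translate (\ref{comp-aybe}), an element of $A \otimes A \otimes A$, into an identity in $A$ by contracting its second and third tensor legs against arbitrary $g, f \in A^*$, that is by applying $\mathrm{id} \otimes g \otimes f$. A direct computation with the explicit coadjoint actions shows that, summand by summand, the first four terms of (\ref{comp-aybe}) are carried respectively to $r_1^\sharp(f) \cdot r_2^\sharp(g)$, $r_2^\sharp(f) \cdot r_1^\sharp(g)$, $-r_1^\sharp(r_2^\sharp(f) \cdot g)$ and $-r_2^\sharp(r_1^\sharp(f) \cdot g)$. For instance, the third summand $-\sum r_1^{[1]} \otimes r_1^{[2]} \cdot r_2^{[1]} \otimes r_2^{[2]}$ contracts to $-\sum f(r_2^{[2]}) g(r_1^{[2]} \cdot r_2^{[1]}) r_1^{[1]}$, which upon unwinding $r_2^\sharp(f) \cdot g = \big(b \mapsto \sum f(r_2^{[2]}) g(b \cdot r_2^{[1]})\big)$ is precisely $-r_1^\sharp(r_2^\sharp(f) \cdot g)$.

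The one subtle point is that the fifth and sixth summands do not match their targets on the nose: contracting them yields $\sum g(r_1^{[1]}) f(r_1^{[2]} \cdot r_2^{[2]}) r_2^{[1]}$ and $\sum g(r_2^{[1]}) f(r_2^{[2]} \cdot r_1^{[2]}) r_1^{[1]}$. Here I would invoke skew-symmetry: interchanging $r_1^{[1]} \leftrightarrow r_1^{[2]}$ (resp. $r_2^{[1]} \leftrightarrow r_2^{[2]}$) costs a sign since $r_i^\tau = -r_i$, and this converts the two contractions into $-r_2^\sharp(f \cdot r_1^\sharp(g))$ and $-r_1^\sharp(f \cdot r_2^\sharp(g))$. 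Summing all six contributions, the contraction of (\ref{comp-aybe}) equals
\[
r_1^\sharp(f) \cdot r_2^\sharp(g) + r_2^\sharp(f) \cdot r_1^\sharp(g) - r_1^\sharp\big(r_2^\sharp(f) \cdot g + f \cdot r_2^\sharp(g)\big) - r_2^\sharp\big(r_1^\sharp(f) \cdot g + f \cdot r_1^\sharp(g)\big),
\]
which is exactly the compatibility defect of $(r_1^\sharp, r_2^\sharp)$; this gives the forward implication. For the converse, I note that $X \mapsto (\mathrm{id} \otimes g \otimes f)(X)$ over all $f,g \in A^*$ is injective on $A \otimes A \otimes A$: writing the left-hand side of (\ref{comp-aybe}) as $\sum_k x_k \otimes \xi_k$ with the $x_k$ linearly independent and choosing functionals dual to the $x_k$ and to bases of the supports of the $\xi_k$, vanishing of all contractions forces the element itself to vanish. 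Hence compatibility of $(r_1^\sharp, r_2^\sharp)$ for all $f,g$ implies (\ref{comp-aybe}).

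I expect the main obstacle to be purely organizational: keeping the two coadjoint actions straight, tracking which tensor leg pairs with $f$ and which with $g$, and applying skew-symmetry to precisely the two summands that require it (and to no others). There is no conceptual difficulty beyond this bookkeeping, and the separation argument underlying the converse is routine once the forward contraction has been correctly set up.
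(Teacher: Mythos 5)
Your proof is correct, but it follows a genuinely different route from the paper's. The paper disposes of the mixed identity without any computation: since the compatibility (\ref{comp-aybe}) is precisely the cross-term obtained when the associative Yang--Baxter equation is expanded for $r_1 + r_2$, and the $\mathcal{O}$-operator compatibility is precisely the cross-term of the $\mathcal{O}$-operator condition for $r_1^\sharp + r_2^\sharp$, the whole statement reduces to the assertion that $r_1$, $r_2$ and $r_1 + r_2$ are associative Yang--Baxter solutions if and only if $r_1^\sharp$, $r_2^\sharp$ and $(r_1+r_2)^\sharp = r_1^\sharp + r_2^\sharp$ are all $\mathcal{O}$-operators; as $r_1 + r_2$ is again skew-symmetric, this is just three applications of Uchino's result \cite[Example 2.6]{U08}. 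You instead invoke Uchino only for $r_1$ and $r_2$ separately and then prove the equivalence of the two mixed identities by hand: contracting (\ref{comp-aybe}) against $\mathrm{id} \otimes g \otimes f$, identifying the first four summands directly, using skew-symmetry on exactly the fifth and sixth summands, and closing the converse with a separation argument. Your computation checks out (all six contractions are correct with the stated coadjoint actions, and the separation argument is sound even when $A$ is infinite-dimensional, since functionals of the form $g \otimes f$ separate points of $A \otimes A$). What the paper's route buys is brevity and the structural observation that both compatibility notions are polarizations of quadratic conditions on the sum; what yours buys is an explicit dictionary between tensor legs and the coadjoint-module operations---in effect the mixed analogue of Uchino's own computation---which makes visible exactly where skew-symmetry is needed and would still function if the two compatibility conditions were not literally cross-terms of conditions on sums.
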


\begin{proof}
It has been shown in \cite[Example 2.6]{U08} that an skew-symmetric element $r = \sum r^{[1]} \otimes r^{[2]}$ is an associative Yang-Baxter solution if and only if the map $r^\sharp : A^* \rightarrow A$, $r^\sharp (f) = \sum f (r^{[2]}) r^{[1]}$ is an $\mathcal{O}$-operator on $A^*$ over the algebra $A$. Hence the skew-symmetric elements $r_1, r_2, r_1 + r_2$ are associative Yang-Baxter solutions if and only if $r_1^\sharp, r_2^\sharp, r_1^\sharp + r_2^\sharp : A^* \rightarrow A$ are all $\mathcal{O}$-operatots. In other words, $(r_1, r_2)$ is a compatible associative Yang-Baxter solution if and only if $(r_1^\sharp, r_2^\sharp)$ is a compatible $\mathcal{O}$-operator on $A^*$ over the algebra $A$.
\end{proof}

\medskip

\medskip

\section{Cohomology Theory}\label{sec-coho}
Our aim in this section is to introduce cohomology for compatible $\mathcal{O}$-operators and compatible $\mathcal{O}$-operator algebras. We first construct a graded Lie algebra whose Maurer-Cartan elements are compatible $\mathcal{O}$-operators. This characterization of a compatible $\mathcal{O}$-operator $(T_1,T_2)$ gives rise to a cohomology associated to $(T_1, T_2)$. In the second part of this section, we construct an $L_\infty$-algebra whose Maurer-Cartan elements are compatible $\mathcal{O}$-operator algebras. Hence we may also define the cohomology of a compatible $\mathcal{O}$-operator algebra from this Maurer-Cartan characterization.

\medskip

\medskip

\noindent {\bf 3.A. Cohomology of compatible $\mathcal{O}$-operators.} The cohomology of an $\mathcal{O}$-operator (in the associative context) was defined in \cite{D20}. In this section, we introduce the cohomology of a compatible $\mathcal{O}$-operator and finds a relation with the cohomology of the corresponding {total $\mathcal{O}$-operator}. An application of our cohomology will be given in the next section.

\medskip

Let $A$ be an associative algebra and $M$ be an $A$-bimodule. Using the derived bracket \cite{V05}, the author in \cite{U08} constructs a graded Lie bracket $[ ~, ~ ]$ on the graded space $ C^\bullet_\mathrm{O} (M,A) = \oplus_{n \geq 0} C^n_\mathrm{O} (M,A),$ where $C^n_\mathrm{O} (M,A) = \mathrm{Hom}(M^{\otimes n}, A)$. The explicit description of the bracket is given by (see \cite{D20})
\begin{align}\label{dgla-b}
&[ P, Q ] (u_1, \ldots, u_{m+n}) \\
&=  \sum_{i=1}^m (-1)^{(i-1)n}~ P ( u_1, \ldots, u_{i-1}, Q (u_i, \ldots, u_{i+n-1}) \cdot u_{i+n}, \ldots, u_{m+n}) \nonumber \\
&- \sum_{i=1}^m (-1)^{in} ~ P (u_1, \ldots, u_{i-1}, u_i \cdot Q (u_{i+1}, \ldots, u_{i+n}), u_{i+n+1}, \ldots, u_{m+n})  \nonumber \\
&- (-1)^{mn} \bigg\{   \sum_{i=1}^n (-1)^{(i-1)m}~ Q ( u_1, \ldots, u_{i-1}, P (u_i, \ldots, u_{i+m-1}) \cdot u_{i+m}, \ldots, u_{m+n})      \nonumber \\
&- \sum_{i=1}^n (-1)^{im} ~ Q (u_1, \ldots, u_{i-1}, u_i \cdot P (u_{i+1}, \ldots, u_{i+m}), u_{i+m+1}, \ldots, u_{m+n})     \bigg\}  \nonumber  \\
& + (-1)^{mn} \big\{   P(u_1, \ldots, u_m) \cdot Q (u_{m+1}, \ldots, u_{m+n}) - (-1)^{mn} ~ Q (u_1, \ldots, u_n) \cdot P (u_{n+1}, \ldots, u_{m+n}) \big\},  \nonumber 
\end{align}
\begin{align}
&[ P, a ] (u_1, \ldots, u_m) = \sum_{i=1}^m ~ P (u_1, \ldots, u_{i-1}, a \cdot u_i - u_i \cdot a, u_{i+1}, \ldots, u_m) \\
& \qquad \qquad \qquad \qquad \qquad + P (u_1, \ldots, u_m) \cdot a - a \cdot P (u_1, \ldots, u_m), \nonumber 
\end{align}
\begin{align}
[ a, b ] = a \cdot b - b \cdot a, 
\end{align}
for $P \in \mathrm{Hom}(M^{\otimes m},A), Q \in \mathrm{Hom}(M^{\otimes n},A)$ and $a, b \in A$.
The following result has been proved in \cite{D20}.
\begin{prop} A linear map $T : M \rightarrow A$ is an $\mathcal{O}$-operator on $M$ over the algebra $A$ if and only if $T \in \mathrm{Hom}(M,A)$ is a Maurer-Cartan element in the graded Lie algebra $\big( C^\bullet_\mathrm{O} (M,A) , [ ~, ~ ] \big).$
\end{prop}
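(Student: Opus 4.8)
The plan is to unwind the definition of a Maurer-Cartan element and reduce the claim to a single, explicit bracket computation. By definition, $T \in \mathrm{Hom}(M,A) = C^1_\mathrm{O}(M,A)$ is a Maurer-Cartan element of the graded Lie algebra $\big(C^\bullet_\mathrm{O}(M,A), [~,~]\big)$ precisely when $[T,T] = 0$. Since $[T,T]$ lives in $\mathrm{Hom}(M^{\otimes 2}, A)$, the entire statement amounts to evaluating $[T,T](u,v)$ for arbitrary $u, v \in M$ and comparing the result with the $\mathcal{O}$-operator defect $T(u)\cdot T(v) - T(T(u)\cdot v + u \cdot T(v))$. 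So first I would specialize the general bracket formula (\ref{dgla-b}) to the case $P = Q = T$ and $m = n = 1$.

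With $m = n = 1$ most of the sums in (\ref{dgla-b}) collapse to a single term, and the four groups of terms contribute as follows: the first line gives $T(T(u)\cdot v)$; the second line, after accounting for the sign $-(-1)^{in}$ with $i = n = 1$, gives $+T(u \cdot T(v))$; the symmetric block carrying the prefactor $-(-1)^{mn} = +1$ reproduces $T(T(u)\cdot v) + T(u \cdot T(v))$ (this is forced by the symmetry $P = Q$); and the final block, with prefactor $+(-1)^{mn} = -1$, contributes $-2\,T(u)\cdot T(v)$. Collecting everything yields
\begin{align*}
[T,T](u,v) = 2\,\big[\, T\big(T(u)\cdot v + u \cdot T(v)\big) - T(u)\cdot T(v) \,\big].
\end{align*}

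Since the base field $\mathbf{k}$ has characteristic $0$, the factor $2$ is invertible, so $[T,T] = 0$ holds if and only if $T(u)\cdot T(v) = T\big(T(u)\cdot v + u \cdot T(v)\big)$ for all $u, v \in M$, which is exactly the condition that $T$ be an $\mathcal{O}$-operator on $M$ over $A$. This proves both implications simultaneously.

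I do not expect any conceptual obstacle here: the content is a bounded, mechanical specialization of (\ref{dgla-b}). The only point requiring genuine care is the \emph{sign bookkeeping} in the four groups of terms, in particular correctly evaluating each prefactor $(-1)^{(i-1)n}$, $(-1)^{in}$, and $(-1)^{mn}$ at $m = n = 1$, and noticing that the ``cross'' term in the last block doubles rather than cancels (because $-(-1)^{mn} = +1$ at $m=n=1$). Keeping track of these signs is what produces the overall factor $2$, and it is precisely the characteristic-zero hypothesis that lets this factor be discarded in the final equivalence.
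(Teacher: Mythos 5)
Your proof is correct: specializing the bracket formula to $P=Q=T$, $m=n=1$ does give $[T,T](u,v) = 2\big[\,T\big(T(u)\cdot v + u\cdot T(v)\big) - T(u)\cdot T(v)\,\big]$ (all four sign prefactors are evaluated correctly, including the doubling in the last block), and the characteristic-zero hypothesis then yields the equivalence. The paper itself gives no proof of this proposition, citing \cite{D20} instead, and your direct verification is exactly the computation carried out there, so this is essentially the same approach.
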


Let $T : M \rightarrow A$ be an $\mathcal{O}$-operator on $M$ over the algebra $A$. It follows that $T$ induces a differential 
\begin{align*}
\delta_T := [ T, - ] : C^n_\mathrm{O} (M,A) \rightarrow C^{n+1}_\mathrm{O} (M,A), \text{ for } n \geq 0.
\end{align*}
The cohomology groups of the cochain complex $\{ C^\bullet_\mathrm{O}(M,A), \delta_T \}$ are called the cohomology of the $\mathcal{O}$-operator $T$.



Next, we aim to construct a graded Lie algebra whose Maurer-Cartan elements are precisely compatible $\mathcal{O}$-operators. To construct such a graded Lie algebra, we recall a basic construction \cite[Proposition 2.9]{cdm}. 

\begin{prop}\label{new-gla-prop}  Let $(\mathfrak{g} = \oplus_{n \geq 0} \mathfrak{g}^n, [~,~])$ be a graded Lie algebra. Consider the graded vector space $\mathfrak{g}_c = \oplus_{n \geq 0} (\mathfrak{g}_c)^n$, where
\begin{align*}
(\mathfrak{g}_c)^0 = \mathfrak{g}^0 ~~~~ \text{ and } ~~~~ (\mathfrak{g}_c)^n = \underbrace{\mathfrak{g}^n \oplus \cdots \oplus \mathfrak{g}^n}_{(n+1) \mathrm{ ~summand}}.
\end{align*}

(i) Then $(\mathfrak{g}_c, \llbracket ~, ~ \rrbracket)$ is a graded Lie algebra, where
\begin{align}\label{new-bracket}
\llbracket (x_1, \ldots, x_{m+1}), (y_1, \ldots, y_{n+1}) \rrbracket 
 = \big( [x_1, y_1], \ldots, \underbrace{[x_1, y_i]+[x_2, y_{i-1}] + \cdots + [x_i, y_1]}_{i\mathrm{-th~ place}} , [x_{m+1}, y_{m+1}] \big).
\end{align}

(ii) The map $\Theta : \mathfrak{g}_c \rightarrow \mathfrak{g}$ defined by $\Theta (x)  = x$ and $\Theta ((x_1, \ldots, x_{m+1})) = x_1 + \cdots + x_{m+1}$, for $x \in (\mathfrak{g}_c)^0$ and $(x_1, \ldots, x_{m+1}) \in (\mathfrak{g}_c)^{m \geq 1}$, is a morphism of graded Lie algebras.

(iii) Two elements $\theta_1 , \theta_2 \in \mathfrak{g}^1$ satisfies $[\theta_1, \theta_1] = [\theta_2, \theta_2] = [\theta_1, \theta_2] = 0 $ if and only if $(\theta_1, \theta_2)\in (\mathfrak{g}_c)^1$ is a Maurer-Cartan element in the graded Lie algebra $(\mathfrak{g}_c, \llbracket ~, ~ \rrbracket)$.
\end{prop}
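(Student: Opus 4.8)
The plan is to recognize the entire construction as living inside the polynomial graded Lie algebra $\mathfrak{g}[t] := \mathfrak{g} \otimes {\bf k}[t]$, equipped with the bracket $[x \otimes p,\, y \otimes q] = [x,y] \otimes pq$ and the grading inherited from $\mathfrak{g}$ alone, so that $\mathfrak{g}^n \otimes t^k$ sits in degree $n$ for every $k$. Since ${\bf k}[t]$ is commutative and concentrated in degree zero, no Koszul signs are introduced and $\mathfrak{g}[t]$ is a graded Lie algebra. I would identify $(x_1, \ldots, x_{m+1}) \in (\mathfrak{g}_c)^m$ with the polynomial $\sum_{i=1}^{m+1} x_i \otimes t^{i-1}$, that is, a polynomial of $t$-degree at most $m$ with coefficients in $\mathfrak{g}^m$. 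Reading off the $\mathfrak{g}[t]$-bracket of $\sum_a x_a \otimes t^{a-1}$ and $\sum_b y_b \otimes t^{b-1}$ in the basis $1, t, t^2, \ldots$, the coefficient of $t^{i-1}$ is exactly $\sum_{a+b = i+1} [x_a, y_b] = [x_1,y_i] + \cdots + [x_i, y_1]$, so under this identification the bracket $\llbracket -, - \rrbracket$ of (\ref{new-bracket}) is precisely the restriction of the bracket of $\mathfrak{g}[t]$.

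For part (i) it then suffices to verify that $\mathfrak{g}_c$ is closed under this bracket. Bracketing a degree-$m$ element (coefficients in $\mathfrak{g}^m$, $t$-degree $\le m$) with a degree-$n$ element (coefficients in $\mathfrak{g}^n$, $t$-degree $\le n$) produces coefficients in $\mathfrak{g}^{m+n}$ and $t$-degree at most $m+n$, which lands exactly in $(\mathfrak{g}_c)^{m+n} = \mathfrak{g}^{m+n} \otimes \langle 1, \ldots, t^{m+n}\rangle$. Hence $\mathfrak{g}_c$ is a graded Lie subalgebra of $\mathfrak{g}[t]$, and the graded antisymmetry and graded Jacobi identity for $\llbracket -,-\rrbracket$ are inherited for free. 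This is where the only genuine content lies: the potential obstacle is precisely the graded Jacobi identity for the somewhat intricate bracket (\ref{new-bracket}), and the polynomial reinterpretation is exactly what bypasses a laborious direct verification of it.

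For part (ii) I would observe that $\Theta$ is nothing but evaluation at $t = 1$: under the above identification $\Theta((x_1, \ldots, x_{m+1})) = \sum_i x_i$ is the restriction to $\mathfrak{g}_c$ of the map $\mathrm{id}_{\mathfrak{g}} \otimes \mathrm{ev}_1 : \mathfrak{g}[t] \to \mathfrak{g}$. Since $\mathrm{ev}_1 : {\bf k}[t] \to {\bf k}$ is a unital ring homomorphism, $\mathrm{id}_{\mathfrak{g}} \otimes \mathrm{ev}_1$ is a degree-preserving morphism of graded Lie algebras, hence so is its restriction $\Theta$. Equivalently, one may verify this directly by summing all components of (\ref{new-bracket}) to obtain $\sum_{a,b} [x_a, y_b] = [\sum_a x_a, \sum_b y_b]$, using that each pair $(a,b)$ occurs in exactly one place.

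Finally, for part (iii) the computation is short. Taking $m = n = 1$ and $X = Y = (\theta_1, \theta_2) \in (\mathfrak{g}_c)^1$ in (\ref{new-bracket}) gives
\[
\llbracket (\theta_1,\theta_2),(\theta_1,\theta_2)\rrbracket = \big([\theta_1,\theta_1],\ [\theta_1,\theta_2]+[\theta_2,\theta_1],\ [\theta_2,\theta_2]\big).
\]
Because $\theta_1, \theta_2$ have odd degree, graded antisymmetry yields $[\theta_1,\theta_2] = [\theta_2,\theta_1]$, so the middle component equals $2[\theta_1,\theta_2]$. As the ground field has characteristic $0$, the Maurer-Cartan equation $\llbracket(\theta_1,\theta_2),(\theta_1,\theta_2)\rrbracket = 0$ holds if and only if $[\theta_1,\theta_1] = [\theta_2,\theta_2] = [\theta_1,\theta_2] = 0$, which is exactly the claimed equivalence.
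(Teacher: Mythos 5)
Your proof is correct, and it takes a genuinely different route from the one the paper relies on. Note first that the paper does not actually prove this proposition: it recalls it from \cite[Proposition 2.9]{cdm}, where the argument is a direct componentwise verification of graded antisymmetry and the graded Jacobi identity for the bracket (\ref{new-bracket}). What you do instead is embed the whole construction into the current algebra $\mathfrak{g} \otimes {\bf k}[t]$, identifying $(x_1, \ldots, x_{m+1}) \in (\mathfrak{g}_c)^m$ with $\sum_i x_i \otimes t^{i-1}$; then (\ref{new-bracket}) is literally the restriction of the canonical bracket $[x \otimes p, y \otimes q] = [x,y] \otimes pq$, closure of $\mathfrak{g}_c$ follows from the degree count $(a-1)+(b-1) \leq m+n$, the Jacobi identity is inherited for free, and $\Theta$ becomes evaluation at $t=1$, which is a Lie morphism because $\mathrm{ev}_1$ is a ring homomorphism. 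This buys conceptual economy (no multi-index computation), makes part (ii) transparent, and explains \emph{why} the compatible-structure bracket exists at all; it would also adapt cleanly to the $L_\infty$-analogue in Theorem \ref{vp-ac}, which the paper likewise leaves unverified. The direct verification of \cite{cdm} buys only self-containedness at the cost of bookkeeping. Two small remarks: your derivation shows that the last component of (\ref{new-bracket}) should read $[x_{m+1}, y_{n+1}]$ rather than $[x_{m+1}, y_{m+1}]$ (a typo in the statement, which your identification silently corrects), and your part (iii) correctly isolates the two places where hypotheses are genuinely used, namely that elements of $\mathfrak{g}^1$ are odd (so $[\theta_1,\theta_2]=[\theta_2,\theta_1]$) and that ${\bf k}$ has characteristic $0$ (so $2[\theta_1,\theta_2]=0$ forces $[\theta_1,\theta_2]=0$).
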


In the following, we apply this construction to the graded Lie algebra $\big( C^\bullet_\mathrm{O} (M,A) , [ ~, ~ ]  \big).$ Let $A$ be an associative algebra and $M$ be an $A$-bimodule. Define a graded vector space $C^\bullet_{\mathrm{cO}} (M,A)$ as
\begin{align}\label{new-gla}
C^0_\mathrm{cO}(M,A) = A ~~~~ \text{ and } ~~~~ C^{n \geq 1}_\mathrm{cO}(M,A) = \underbrace{   \mathrm{Hom}(M^{\otimes n}, A) \oplus \cdots \oplus  \mathrm{Hom}(M^{\otimes n}, A)}_{(n+1) \text{ summand}}.
\end{align}
Then $(C^\bullet_\mathrm{cO}(M,A), \llbracket ~, ~ \rrbracket)$ is a graded Lie algebra, where the bracket $\llbracket ~,~ \rrbracket$ is defined by (\ref{new-bracket}).

It follows that a pair $(T_1, T_2)$ of linear maps $T_1, T_2 : M \rightarrow A$ forms a compatible $\mathcal{O}$-operator if and only if $(T_1, T_2) \in C^1_\mathrm{cO} (M,A)$ is a Maurer-Cartan element in the graded Lie algebra $(C^\bullet_\mathrm{cO}(M,A), \llbracket ~, ~ \rrbracket)$. Therefore, a compatible $\mathcal{O}$-operator $(T_1, T_2)$ induces a differential
\begin{align*}
\delta_{(T_1, T_2)} := \llbracket (T_1, T_2), - \rrbracket : C^n_\mathrm{cO}(M,A) \rightarrow C^{n+1}_\mathrm{cO}(M,A), \text{ for } n \geq 0.
\end{align*}
Explicitly, for $a \in C^0_\mathrm{cO}(M,A)$ and $(f_1, \ldots, f_{n+1}) \in C^{n \geq 1}_\mathrm{cO}(M,A),$ we have
\begin{align*}
\delta_{(T_1, T_2)} (a) =~& ([T_1, a], [T_2, a]), \\
\delta_{(T_1, T_2)} ((f_1, \ldots, f_{n+1})) =~& \big( [T_1, f_1], \ldots, \underbrace{[T_1, f_i]+ [T_2, f_{i-1}]}_{i\text{-th place}}, \ldots, [T_2, f_{n+1}]  \big).
\end{align*}

\begin{defn}
The cohomology groups of the cochain complex $\{ C^\bullet_\mathrm{cO}(M,A), \delta_{(T_1, T_2)} \}$ are called the cohomology of the compatible $\mathcal{O}$-operator $(T_1, T_2)$, and they are denoted by $H^\bullet_{(T_1,T_2)} (M,A)$.
\end{defn}

Next, we show that the cohomology of the compatible $\mathcal{O}$-operator $(T_1, T_2)$ can be seen as the cohomology of the induced compatible associative algebra given in Theorem \ref{induced-comp-th}. To show this, we first recall the general cohomology theory of compatible associative algebras. Let $(A, \cdot_1, \cdot_2)$ be a compatible associative algebra and $(M, l_1, r_1, l_2, r_2)$ be a compatible bimodule over it. For each $n \geq 0$, let $C^n_\mathrm{cAss}(A, M)$ be the abelian group defined by
\begin{align*}
C^0_\mathrm{cAss}(A, M) =~& \{ m \in M ~|~ a \cdot_1 m - m \cdot_1 a = a \cdot_2 m - m \cdot_2 a, \text{ for all } a \in A \},\\
C^n_\mathrm{cAss}(A, M) =~& \underbrace{\mathrm{Hom}(A^{\otimes n}, M) \oplus \cdots \oplus \mathrm{Hom}(A^{\otimes n}, M)}_{n\text{ times}}, \text{ for } n \geq 1.
\end{align*}
There is a map $\delta_\mathrm{cAss} : C^n_\mathrm{cAss}(A, M) \rightarrow C^{n+1}_\mathrm{cAss}(A, M)$, for $n \geq 0$, given by
\begin{align*}
\delta_\mathrm{cAss} (m) :=~& \delta^1_\mathrm{Ass} (m) = \delta^2_\mathrm{Ass} (m),\\
\delta_\mathrm{cAss} (f_1, \ldots, f_n) :=~& \big( \delta^1_\mathrm{Ass} (f_1) , \ldots, \underbrace{\delta^1_\mathrm{Ass} (f_i) + \delta^2_\mathrm{Ass}(f_{i-1})}_{i\text{-th place}}, \ldots, \delta^2_\mathrm{Ass} (f_n)   \big),
\end{align*}
for $m \in C^0_\mathrm{cAss}(A, M)$ and $(f_1, \ldots, f_n) \in C^{n \geq 1}_\mathrm{cAss}(A, M)$. Here $\delta^1_\mathrm{Ass}$ is the Hochschild coboundary operator of the associative algebra $(A, \cdot_1)$ with coefficients in the bimodule $(M, l_1, r_1)$, and $\delta^2_\mathrm{Ass}$ is the Hochschild coboundary operator of the associative algebra $(A, \cdot_2)$ with coefficients in the bimodule $(M, l_2, r_2)$. It has been observed in \cite{cdm} that $\{ C^\bullet_\mathrm{cAss}(A, M), \delta_\mathrm{cAss} \}$ is a cochain complex. The corresponding cohomology groups are called the cohomology of the compatible associative algebra $(A, \cdot_1, \cdot_2)$ with coefficients in the compatible bimodule $(M, l_1, r_1, l_2, r_2)$, and they are denoted by $H^\bullet_\mathrm{cAss}(A, M)$.

Next, let $A$ be an associative algebra, $M$ be an $A$-bimodule and $(T_1, T_2)$ be a compatible $\mathcal{O}$-operator on $M$ over the algebra $A$. Consider the induced compatible associative algebra $(M. \star_1, \star_2)$ and its compatible bimodule $(A, l_{T_1}, r_{T_1}, l_{T_2}, r_{T_2})$. Then it can be easily checked that (similar to \cite[Proposition 3.3]{D20})
\begin{align*}
\delta^1_\mathrm{Ass} (f) =  (-1)^n ~[T_1, f]   ~~~ \text{ and } ~~~ \delta^2_\mathrm{Ass} (f) =  (-1)^n ~[T_2, f], \text{ for } f \in \mathrm{Hom}(M^{\otimes n}, A).
\end{align*} 
Here $\delta^1_\mathrm{Ass}$ ~(resp. $\delta^1_\mathrm{Ass}$) is the Hochschild coboundary operator of the associative algebra $(M, \star_1)$ ~(resp. $(M, \star_2)$) with coefficients in the bimodule $(A, l_{T_1}, r_{T_1})$ ~(resp. $(A, l_{T_2}, r_{T_2})$). As a consequence, we get the following.

\begin{thm}
Let $(T_1, T_2)$ be a compatible $\mathcal{O}$-operator on $M$ over the algebra $A$. Then the cohomology of $(T_1, T_2)$ is isomorphic to the cohomology of the induced compatible associative algebra $(M, \star_1, \star_2)$ with coefficients in the compatible bimodule $(A, l_{T_1}, r_{T_1}, l_{T_2}, r_{T_2})$. 
\end{thm}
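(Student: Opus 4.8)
The plan is to promote the pointwise identities $\delta^1_\mathrm{Ass}(f) = (-1)^n [T_1,f]$ and $\delta^2_\mathrm{Ass}(f) = (-1)^n [T_2,f]$ (valid for $f \in \mathrm{Hom}(M^{\otimes n},A)$, established just before the statement) into an explicit isomorphism of cochain complexes between $\{ C^\bullet_\mathrm{cO}(M,A), \delta_{(T_1,T_2)} \}$ and the complex $\{ C^\bullet_\mathrm{cAss}(M,A), \delta_\mathrm{cAss} \}$ computing the cohomology of the induced compatible associative algebra $(M,\star_1,\star_2)$ with coefficients in $(A, l_{T_1}, r_{T_1}, l_{T_2}, r_{T_2})$. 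Since an isomorphism of cochain complexes induces isomorphisms on all cohomology groups, this suffices. This mirrors the single-operator case of \cite{D20}, where the analogous identity already identifies the $\mathcal{O}$-operator complex with the Hochschild complex of the induced algebra.

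First I would write both coboundary operators in the same shape. By the bracket of Proposition~\ref{new-gla-prop}, the $i$-th slot of $\delta_{(T_1,T_2)}(f_1,\dots)$ is $[T_1,f_i] + [T_2,f_{i-1}]$, while the $i$-th slot of $\delta_\mathrm{cAss}(f_1,\dots)$ is $\delta^1_\mathrm{Ass}(f_i) + \delta^2_\mathrm{Ass}(f_{i-1})$. The crucial observation is that inside a fixed slot both contributions, $\delta^1_\mathrm{Ass}(f_i)$ and $\delta^2_\mathrm{Ass}(f_{i-1})$, are applied to cochains $f_i, f_{i-1}$ of the \emph{same} tensor degree $n$, so both acquire the identical prefactor $(-1)^n$ under the identities above. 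Consequently $\delta_\mathrm{cAss}$ acts on the level-$n$ group exactly as $(-1)^n \delta_{(T_1,T_2)}$, slot by slot and summand by summand.

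I would then take the comparison map $\Phi = \{\Phi_n\}$ to be the degree-wise rescaling $\Phi_n = c_n \cdot \mathrm{Id}$, acting diagonally on all summands at level $n$, where the scalars are fixed by $c_0 = 1$ and the recursion $c_{n+1} = (-1)^n c_n$ so that the signs telescope. The chain-map identity is then immediate on each level: $\delta_\mathrm{cAss}\, \Phi_n = c_n\, \delta_\mathrm{cAss} = (-1)^n c_n\, \delta_{(T_1,T_2)} = c_{n+1}\, \delta_{(T_1,T_2)} = \Phi_{n+1}\, \delta_{(T_1,T_2)}$. Since each $c_n \neq 0$, $\Phi$ is an isomorphism of complexes; it remains only to check that it restricts correctly to the degree-zero terms, matching $a \in A = C^0_\mathrm{cO}(M,A)$ against the distinguished subspace $C^0_\mathrm{cAss}(M,A)$ cut out by the condition $[T_1,a] = [T_2,a]$.

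The step I expect to require the most care is the exact level-by-level matching of the two cochain groups, i.e.\ confirming that the $\mathfrak{g}_c$-type bookkeeping (number of summands, indexing of the interleaved slots, and the degree-zero term) coming from the Maurer--Cartan construction for compatible $\mathcal{O}$-operators agrees with the indexing of the compatible associative cohomology of \cite{cdm} under the substitution algebra $= (M,\star_1,\star_2)$, coefficients $= A$. Once the complexes are confirmed to have the same shape in each degree, the sign argument above finishes the proof; notably, the a~priori worry that $\delta^1_\mathrm{Ass}$ and $\delta^2_\mathrm{Ass}$ might demand incompatible sign corrections does not arise, precisely because within any single slot both elementary operators act on cochains of the same tensor degree and hence share the common prefactor $(-1)^n$.
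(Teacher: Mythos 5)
Your proposal follows exactly the paper's own route: establish $\delta^1_\mathrm{Ass}(f)=(-1)^n[T_1,f]$ and $\delta^2_\mathrm{Ass}(f)=(-1)^n[T_2,f]$, observe that both coboundaries interleave their two elementary operators slot by slot, and conclude that the differentials agree up to a nonzero scalar in each degree (your sign recursion $c_{n+1}=(-1)^n c_n$ is consistent). However, the step you explicitly deferred --- ``confirming that the complexes have the same shape in each degree'' --- is not a bookkeeping formality: with the definitions as they stand in this section it \emph{fails}, and this is a genuine gap (one which the paper's own ``as a consequence'' also elides). By (\ref{new-gla}), $C^n_\mathrm{cO}(M,A)$ is a direct sum of $n+1$ copies of $\mathrm{Hom}(M^{\otimes n},A)$, whereas the complex computing the cohomology of the induced compatible associative algebra $(M,\star_1,\star_2)$ with coefficients in $(A,l_{T_1},r_{T_1},l_{T_2},r_{T_2})$ has, by the definition recalled earlier in the section, only $n$ copies of $\mathrm{Hom}(M^{\otimes n},A)$ in degree $n$; likewise $C^0_\mathrm{cO}(M,A)=A$ while $C^0_\mathrm{cAss}(M,A)$ is the subspace $\{a\in A : [T_1,a]=[T_2,a]\}$. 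So your map $\Phi_n=c_n\cdot\mathrm{Id}$ does not exist: its source and target are different spaces. The mismatch is structural: a map in $\mathrm{Hom}(M^{\otimes n},A)$ has degree $n-1$ in the Gerstenhaber graded Lie algebra underlying compatible associative cohomology, but degree $n$ in Uchino's graded Lie algebra underlying $C^\bullet_\mathrm{cO}(M,A)$, and the construction of Proposition \ref{new-gla-prop} always places (degree plus one) copies in each degree.

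Moreover, no rescaling or re-indexing can close this gap, because the two cohomologies genuinely differ. Take $T_1=T$ an arbitrary $\mathcal{O}$-operator and $T_2=0$, so that $(T,0)$ is a compatible $\mathcal{O}$-operator. Writing $Z^1_T=\{f\in\mathrm{Hom}(M,A) : [T,f]=0\}$, one has $\delta_{(T,0)}(a)=([T,a],0)$ and $\delta_{(T,0)}(f_1,f_2)=([T,f_1],[T,f_2],0)$, hence $H^1_{(T,0)}(M,A)\cong H^1_T(M,A)\oplus Z^1_T$; on the compatible associative side, $C^0_\mathrm{cAss}=\{a : [T,a]=0\}$ forces all $1$-coboundaries to vanish, so the degree-one cohomology there is $Z^1_T$. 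For finite-dimensional $A,M$ with $H^1_T(M,A)\neq 0$ (e.g.\ $T=0$, where the two sides become $\mathrm{Hom}(M,A)^{\oplus 2}$ and $\mathrm{Hom}(M,A)$) the dimensions disagree. What your argument actually proves is the weaker, correct statement that $\delta_{(T_1,T_2)}$ is, slot by slot and up to the sign $(-1)^n$, the interleaving of the two Hochschild coboundaries $\delta^1_\mathrm{Ass},\delta^2_\mathrm{Ass}$ of the induced structure; i.e.\ $C^\bullet_\mathrm{cO}(M,A)$ is isomorphic to a compatible-Hochschild-type complex with $n+1$ slots in degree $n$, which is \emph{not} the complex $C^\bullet_\mathrm{cAss}(M,A)$ defined earlier. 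A correct theorem requires either re-indexing one of the two complexes so that the number of summands per degree agrees, or weakening the statement to this slotwise identification.
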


\medskip

\medskip

Let $(T_1, T_2)$ be a compatible $\mathcal{O}$-operator on $M$ over the algebra $A$. Consider the {total $\mathcal{O}$-operator} $T_1 + T_2$. We define a map
\begin{align*}
\Theta : C^\bullet_\mathrm{cO} (M,A) \rightarrow C^\bullet_\mathrm{O} (M,A), ~~~ \Theta ((f_1, \ldots, f_{n+1} )) = f_1 + \cdots + f_{n+1}, \text{ for } n \geq 0.
\end{align*}
Then it follows from Proposition \ref{new-gla-prop} that the map $\Theta$ is a graded Lie algebra morphism from $( C^\bullet_\mathrm{cO} (M,A), \llbracket ~, ~ \rrbracket )$ to  $( C^\bullet_\mathrm{O} (M,A), [~,~] )$. Moreover, we have $\Theta ((T_1 , T_2)) = T_1 + T_2$. As a consequence, we get the following.

\begin{thm}
Let $(T_1, T_2)$ be a compatible $\mathcal{O}$-operator on $M$ over the algebra $A$. Then $\Theta$ induces a morphism $H^\bullet_{(T_1,T_2)} (M,A) \rightarrow H^\bullet_{T_1+ T_2} (M,A)$ from the cohomology of the compatible $\mathcal{O}$-operator $(T_1, T_2)$ to the cohomology of the total $\mathcal{O}$-operator $T_1+ T_2$.
\end{thm}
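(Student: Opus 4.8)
The plan is to show that $\Theta$ is a morphism of cochain complexes from $\{ C^\bullet_\mathrm{cO}(M,A), \delta_{(T_1,T_2)} \}$ to $\{ C^\bullet_\mathrm{O}(M,A), \delta_{T_1+T_2} \}$; once this is established, the fact that a cochain map sends cocycles to cocycles and coboundaries to coboundaries immediately yields the desired induced morphism on cohomology. So the whole argument reduces to verifying one intertwining identity.

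First I would recall the two ingredients already available. By Proposition \ref{new-gla-prop}(ii) the summation map $\Theta$ is a morphism of graded Lie algebras, so that $\Theta ( \llbracket x, y \rrbracket ) = [ \Theta(x), \Theta(y) ]$ for all homogeneous $x, y \in C^\bullet_\mathrm{cO}(M,A)$. Second, by the explicit definition of $\Theta$ we have $\Theta ((T_1, T_2)) = T_1 + T_2$, the total $\mathcal{O}$-operator. The key step is then to compute, for any $x \in C^\bullet_\mathrm{cO}(M,A)$,
\[
\Theta ( \delta_{(T_1,T_2)}(x) ) = \Theta ( \llbracket (T_1,T_2), x \rrbracket ) = [ \Theta((T_1,T_2)), \Theta(x) ] = [ T_1 + T_2, \Theta(x) ] = \delta_{T_1+T_2}( \Theta(x) ),
\]
where the first equality is the definition of $\delta_{(T_1,T_2)} = \llbracket (T_1,T_2), - \rrbracket$, the second is the graded Lie algebra morphism property of $\Theta$, the third substitutes $\Theta((T_1,T_2)) = T_1 + T_2$, and the last is the definition of $\delta_{T_1+T_2} = [ T_1 + T_2, - ]$. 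This gives $\Theta \circ \delta_{(T_1,T_2)} = \delta_{T_1+T_2} \circ \Theta$, i.e. $\Theta$ is a cochain map, and therefore it descends to a well-defined morphism $H^\bullet_{(T_1,T_2)}(M,A) \to H^\bullet_{T_1+T_2}(M,A)$.

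I do not anticipate a genuine obstacle: the entire content is packaged into the morphism property of $\Theta$ established in Proposition \ref{new-gla-prop} together with the fact that $\Theta$ carries the Maurer-Cartan element $(T_1,T_2)$ to the Maurer-Cartan element $T_1 + T_2$. The only point requiring slight care is that both differentials are defined precisely as brackets against these Maurer-Cartan elements, with matching degree and sign conventions, so that the chain of equalities above goes through verbatim; no input beyond the already-stated results is needed.
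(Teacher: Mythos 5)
Your proposal is correct and follows exactly the paper's argument: the paper likewise invokes Proposition \ref{new-gla-prop} to see that $\Theta$ is a graded Lie algebra morphism, notes $\Theta((T_1,T_2)) = T_1 + T_2$, and concludes that $\Theta$ is a cochain map inducing the morphism on cohomology. The only difference is that you spell out the intertwining computation $\Theta \circ \delta_{(T_1,T_2)} = \delta_{T_1+T_2} \circ \Theta$ explicitly, which the paper leaves implicit in the phrase ``as a consequence.''
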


\medskip

\medskip

\noindent {\bf 3.B. Cohomology of compatible $\mathcal{O}$-operator algebras.} Here we construct an $L_\infty[1]$-algebra whose Maurer-Cartan elements are precisely compatible $\mathcal{O}$-operator algebras. Using this characterization, we define cohomology of compatible $\mathcal{O}$-operator algebras. We start with some basics of $L_\infty$-algebras.

\begin{defn}
An $L_\infty$-algebra is a pair $(L = \oplus_{i \in \mathbb{Z}} L_i, \{ l_k \}_{k=1}^\infty)$ consisting of a graded vector space $L = \oplus_{i \in \mathbb{Z}} L_i$ together with a collection $\{ l_k \}_{k=1}^\infty$ of degree $1$ multilinear maps $l_k : L^{\otimes k} \rightarrow L$ (for $k \geq 1$) that satisfy

- graded symmetry: ~~$l_k(x_{\sigma(1)},\ldots,x_{\sigma(k)})=\epsilon(\sigma)l_k(x_1,\ldots,x_k),$ for any $\sigma\in S_k$ and $k \geq 1$,

- higher Jacobi identities:
\begin{align*}
\sum_{i+j=n+1}\sum_{\sigma\in S_{(i,n-i)}}\epsilon(\sigma)l_j\left(l_i\left(x_{\sigma(1)},\ldots,x_{\sigma(i)} \right) ,x_{\sigma(i+1)},\ldots,x_{\sigma(n) }\right)=0,
\end{align*}
for $x_1, \ldots, x_n \in L$ and $n \geq 1$. Here $S_{(i,n-i)}$ is the set of all $(i, n-i)$-shuffles and $\epsilon (\sigma) = \epsilon (\sigma; x_1, \ldots, x_n)$ is the standard Koszul sign in the graded context.
\end{defn}

Throughout the paper, all $L_\infty$-algebras are assumed to be weakly filtered \cite{DasMishra}. In other words, certain infinite summations are always convergent. See \cite{getzler} for more details.

\begin{defn}
Let $(L = \oplus_{i \in \mathbb{Z}} L_i, \{ l_k \}_{k=1}^\infty)$  be an $L_\infty$-algebra. An element $\alpha \in L_0$ is said to be a Maurer-Cartan element if it satisfies
\begin{align*}
\sum_{k=1}^\infty \frac{1}{k !} ~\! l_k (\alpha, \ldots, \alpha) = 0.
\end{align*}
\end{defn}

A Maurer-Cartan element $\alpha \in L_0$ induces a degree $1$ differential $l_1^\alpha : L \rightarrow L$ given by
\begin{align*}
l_1^\alpha (x) = \sum_{i=0}^\infty \frac{1}{i !} ~ \! l_{i+1} (\underbrace{\alpha, \ldots, \alpha}_{i \text{ times}}, x).
\end{align*}
In other words, $(L, l_1^\alpha)$ is a cochain complex. The corresponding cohomology groups are called the cohomology induced by the Maurer-Cartan element $\alpha$.

\medskip

An important class of $L_\infty$-algebras arise from $V$-datas \cite{V05}. Recall that a $V$-data is a quadrupe $(V, \mathfrak{a}, P, \triangle)$ in which $V$ is a graded Lie algebra (with the graded Lie bracket $[~,~]$), $\mathfrak{a} \subset V$ is an abelian graded Lie subalgebra, $P : V \rightarrow \mathfrak{a}$ is a projection map with the property that $\mathrm{ker}(P) \subset V$ is a graded Lie subalgebra, and $\triangle \in \mathrm{ker}(P)_1$ that satisfies $[\triangle, \triangle] = 0$. Given a $V$-data, there are few constructions of $L_\infty$-algebras. In the following, we only mention the one which is relevant in our context \cite{V05}.

\begin{thm}\label{v-th}
Let $(V, \mathfrak{a}, P, \triangle)$ be a $V$-data. Suppose $V' \subset V$ is a graded Lie subalgebra that satisfies $[\triangle, V'] \subset V'$. Then the graded vector space $V'[1] \oplus \mathfrak{a}$ carries an $L_\infty$-algebra structure with the multilinear operations
\begin{align*}
l_1 (x[1], a) =~& \big(  - [\triangle, x][1], ~P (x + [\triangle, a]) \big),\\
l_2 (x[1], y[1]) =~& (-1)^{|x|} ~ [x, y][1],\\
l_k (x[1], a_1, \ldots, a_{k-1}) =~& P [ \cdots [[ x, a_1], a_2], \ldots, a_{k-1} ],~ \text{ for } k \geq 2,\\
l_k (a_1, \ldots, a_k) =~& P [\cdots [[ \triangle, a_1], a_2], \ldots, a_k ], ~ \text{ for } k \geq 2.
\end{align*}
Here $x, y$ are homogeneous elements in $V'$ and $a_1, \ldots, a_k$ are homogeneous elements in $\mathfrak{a}$. Up to the permutations of the above entries, all other multilinear operations vanish.
\end{thm}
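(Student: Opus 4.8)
The statement is Voronov's higher--derived--bracket theorem \cite{V05}, so the plan is to realize all four families of operations as derived brackets of the single square-zero element $\triangle$ and to verify the $L_\infty$ axioms by reducing everything to the graded Jacobi identity in $V$. First I would record the structural facts used throughout. The operator $D := [\triangle, -]$ is a degree-$1$ derivation of $[~,~]$ and satisfies $D^2 = \tfrac{1}{2}[[\triangle, \triangle], -] = 0$; since $\triangle \in \ker(P)$ and $\ker(P)$ is a graded Lie subalgebra, $D$ preserves $\ker(P)$; and since $\mathfrak{a}$ is abelian, $[a, b] = 0$ for all $a, b \in \mathfrak{a}$. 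The hypothesis $[\triangle, V'] \subset V'$ guarantees that the $V'[1]$-component $-[\triangle, x][1]$ of $l_1$ genuinely lands in $V'[1]$, so every operation listed is well defined; the remaining component of $l_1$ is the projection $P$ together with the derived differential $P[\triangle, -]$ on $\mathfrak{a}$.

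Next I would check graded symmetry. For $l_2$ on $V'[1]$ the graded antisymmetry of $[~,~]$ becomes graded symmetry after the degree shift, which is precisely the role of the sign $(-1)^{|x|}$. For the mixed operations $l_k(x[1], a_1, \dots, a_{k-1}) = P[\cdots[[x, a_1], a_2], \dots, a_{k-1}]$ and the pure operations $l_k(a_1, \dots, a_k) = P[\cdots[[\triangle, a_1], a_2], \dots, a_k]$, symmetry in the $\mathfrak{a}$-slots follows from the graded Jacobi identity together with $[a_i, a_{i+1}] = 0$: one obtains $[[z, a_i], a_{i+1}] = (-1)^{|a_i||a_{i+1}|} [[z, a_{i+1}], a_i]$, which is exactly the Koszul symmetry required for adjacent entries.

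The main work, and the principal obstacle, is the higher Jacobi identities, which I would organize according to the number $p$ of inputs lying in $V'[1]$. Since the only operations with output in $V'$ are the $D$-part of $l_1$ (one $V'$-input) and $l_2$ (two $V'$-inputs), a two-level composite can thread at most $p = 3$ elements of $V'$ back to a $V'$-output, so every identity with $p \geq 4$ is vacuous and only $p \in \{0, 1, 2, 3\}$ survive. For $p = 0$ the identity is Voronov's classical derived-bracket relation on $\mathfrak{a}$; its proof is a telescoping expansion of the nested brackets in which graded Jacobi is applied repeatedly, any term carrying an interior bracket of two already-projected arguments lands in $\ker(P)$ (because $\ker P$ is a subalgebra stable under $D$) and is killed by the outer $P$, and the surviving boundary terms cancel in pairs using $D^2 = 0$. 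The cases $p = 1$ and $p = 2$ couple the cross-term $P$ and the differential inside $l_1$ with the mixed brackets; here $l_1^2 = 0$ already requires both $D^2 = 0$ and the vanishing $P D (1 - P) = 0$ coming from $D$-stability of $\ker P$, and the higher relations reduce again to graded Jacobi plus the derivation property of $D$. Finally $p = 3$ occurs only at arity $n = 3$ with no $\mathfrak{a}$-inputs and reduces, via the $l_2 \circ l_2$ terms, to the graded Jacobi identity of $[~,~]$ on $V'$. Throughout, the genuine difficulty is not conceptual but bookkeeping: tracking the Koszul signs generated by the shuffles and by the shift $[1]$, and arranging the telescoping cancellations. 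The algebraic engine in every case is the same quartet: graded Jacobi in $V$, $D^2 = 0$, abelianness of $\mathfrak{a}$, and stability of $\ker P$ under both brackets and $D$.
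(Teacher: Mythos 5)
The paper itself does not prove this theorem: it is quoted from Voronov's work on higher derived brackets (reference [V05]), so there is no internal proof to compare against, and your attempt has to be measured against Voronov's original argument. Against that benchmark, your skeleton is the right one and is essentially his: the quartet you isolate (graded Jacobi in $V$; $[\triangle,\triangle]=0$, hence $D^2=0$ for $D=[\triangle,-]$; abelianness of $\mathfrak{a}$; stability of $\ker P$ under the bracket and under $D$, which yields $P[\triangle,(1-P)u]=0$) is exactly what drives the proof. The symmetry of the derived brackets via $[[z,a],b]=(-1)^{|a||b|}[[z,b],a]$ when $[a,b]=0$ is Voronov's lemma; the organization of the higher Jacobi identities by the number $p$ of $V'[1]$-inputs is correct, as is the count showing that only $l_1$ and $l_2$ have output in $V'[1]$, so identities with $p\geq 4$ are vacuous, $p=3$ is nontrivial only at arity $3$ where it is the shifted Jacobi identity of $[~,~]$ on $V'$, and $p=0$ is precisely the statement that the derived brackets make $\mathfrak{a}$ an $L_\infty$-algebra.

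Two caveats. First, the cases $p\in\{0,1,2\}$ are the entire substance of the theorem, and you only gesture at them (``telescoping'', ``reduce again to graded Jacobi plus the derivation property''); in Voronov's paper these verifications, with their Koszul signs, occupy essentially all of the proof, so as written this is an accurate plan rather than a completed proof. Second, your stated mechanism for the $p=0$ cancellation is slightly off: a bracket of two already-projected arguments vanishes outright because $\mathfrak{a}$ is abelian, not because it lies in $\ker P$; what stability of $\ker P$ under $D$ actually buys is the identity $P[\triangle,Pu]=P[\triangle,u]$ (equivalently $PD(1-P)=0$), and that is the correct engine of the telescoping. You do invoke this fact correctly in your discussion of $l_1^2=0$, so the toolkit is right, but in the $p=0$ paragraph the tool is matched to the wrong step; executed carefully, the strategy goes through.
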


Using the structure maps of the $L_\infty$-algebra constructed in Theorem \ref{v-th}, we will now construct a new one which will be useful to study compatible $\mathcal{O}$-operator algebras.

\begin{thm}\label{vp-ac}
Let $(V, \mathfrak{a}, P, \triangle)$ be a $V$-data. Suppose $V' \subset V$ is a graded Lie subalgebra that satisfies $[\triangle, V'] \subset V'$. Then the graded vector space $V'[1] \oplus \mathfrak{a}_c$, where
\begin{align*}
(V'[1] \oplus \mathfrak{a}_c)_i = V'_{i+1} \oplus \underbrace{\mathfrak{a}_i \oplus \cdots \oplus \mathfrak{a}_i}_{i+2 ~\mathrm{ times}}, \text{ for } i \geq -1
\end{align*}
carries an $L_\infty$-algebra structure with multilinear operations 
\begin{align*}
&\widetilde{l}_k  \big( \big( a_{1,0}, (a_{1,1}, \ldots, a_{1 ,i_1 + 2}) \big),  \ldots, \big( a_{k,0}, (a_{k,1}, \ldots, a_{k ,i_k + 2})   \big)     \big) \\
&= \bigg(   l_k (a_{1, 0}, \ldots, a_{k, 0}), \sum_{j_1 + \cdots + j_k = 2k-2} l_k (a_{1, j_1}, \ldots, a_{k, j_k}) , \ldots, \sum_{j_1 + \cdots + j_k = i_1 + \cdots + i_k + 2k}  l_k (a_{1, j_1}, \ldots, a_{k, j_k}) \bigg).
\end{align*}
Moreover, the collection of maps 
\begin{align*}
\{ \Theta_i : (V'[1] \oplus \mathfrak{a}_c)_i \rightarrow (V'[1] \oplus \mathfrak{a})_i \}_{i=-1}^\infty \text{ defined by } \Theta_i (x, (a_1, \ldots, a_{i+2})) = (x, a_1 + \cdots + a_{i+2})
\end{align*}
 is a morphism of $L_\infty$-algebras.
\end{thm}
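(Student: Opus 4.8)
The plan is to verify the two assertions separately: first that the operations $\{\widetilde{l}_k\}$ satisfy the graded symmetry and higher Jacobi identities of an $L_\infty$-algebra, and second that the degree-preserving linear maps $\{\Theta_i\}$ constitute a (strict) morphism onto the $L_\infty$-algebra of Theorem \ref{v-th}. The guiding structural observation is that each $\widetilde{l}_k$ is assembled from $l_k$ by a convolution over the position indices, in exact analogy with the convolution bracket $\llbracket~,~\rrbracket$ of Proposition \ref{new-gla-prop}. Crucially, every position-component of an element of $(V'[1]\oplus\mathfrak{a}_c)_i$ lives in $(V'[1]\oplus\mathfrak{a})_i$ — the same internal degree — so all Koszul signs $\epsilon(\sigma)$ appearing for $\widetilde{l}$ coincide verbatim with those for $l$. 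Thus no new sign computation is needed, and the entire content is a combinatorial bookkeeping of positions.

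Graded symmetry of each $\widetilde{l}_k$ is then immediate: permuting the $k$ arguments permutes the families $(a_{s,0},a_{s,1},\dots)$, the defining constraint $j_1+\cdots+j_k=\mathrm{const}$ on a summand is symmetric in the $j_s$, and the signs are those of the graded-symmetric $l_k$. For the higher Jacobi identities I would expand each nested term $\widetilde{l}_j\big(\widetilde{l}_i(x_{\sigma(1)},\dots,x_{\sigma(i)}),x_{\sigma(i+1)},\dots,x_{\sigma(n)}\big)$ component by component and read off its contribution to a fixed output position $p$. If the inner operation $\widetilde{l}_i$ outputs at position $q$, then its inputs have positions summing to $q+2i-3$, while the outer operation $\widetilde{l}_j$ feeding on that slot together with the remaining arguments requires the outer input positions to sum to $p+2j-3$. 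Adding these two constraints and using $i+j=n+1$ eliminates $q$ and yields
\[
j_1+\cdots+j_n=p+2n-4,
\]
a relation independent of the split $i+j=n+1$ and of the shuffle $\sigma$.

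This independence is the heart of the argument. Fix an output position $p$ and a position-assignment $(j_1,\dots,j_n)$ with $j_1+\cdots+j_n=p+2n-4$; then each choice of split and shuffle contributes precisely one term $\epsilon(\sigma)\,l_j\big(l_i(x^{j}_{\sigma(1)},\dots),\dots\big)$ built from the corresponding position-components $x_s^{j_s}\in(V'[1]\oplus\mathfrak{a})_{i_s}$, with the internal position forced by $q=j_{\sigma(1)}+\cdots+j_{\sigma(i)}-2i+3$. Summing over all splits and shuffles, with the position-assignment held fixed, reproduces verbatim the higher Jacobi identity of $\{l_k\}$ evaluated on $(x_1^{j_1},\dots,x_n^{j_n})$, which vanishes by Theorem \ref{v-th}. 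Summing next over all admissible assignments $(j_1,\dots,j_n)$ gives the vanishing of the position-$p$ component of the $\widetilde{l}$-expression, and letting $p$ range over all positions (together with the analogous, simpler check on the $V'[1]$-slot) establishes the identity.

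For the morphism statement, note that $\Theta_i$ is linear and acts as the identity on the $V'[1]$-summand while summing the $\mathfrak{a}_c$-components. Hence applying $\Theta$ after $\widetilde{l}_k$ sums the convolution over every position-assignment at once, and by multilinearity of $l_k$ this equals $l_k$ evaluated on the $\Theta$-images of the arguments, i.e. $\Theta\circ\widetilde{l}_k=l_k\circ\Theta^{\otimes k}$ for every $k$; since $\Theta$ carries no higher components, this is exactly the condition to be a strict $L_\infty$-morphism. The step I expect to be the main obstacle is the casework in the higher Jacobi bookkeeping caused by the operations $l_1$ and $l_2$, whose outputs have a $V'[1]$-component: one must fix the convention that the $V'[1]$-slot is ``position $0$'' so that the relation $q=\sum_{\mathrm{inner}}j-2i+3$ and the master identity $\sum_s j_s=p+2n-4$ hold uniformly whether the inner result lands in $V'[1]$ or in $\mathfrak{a}$. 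Once this indexing convention is in place, every step above is a mechanical regrouping, with the signs, as noted, requiring no separate treatment.
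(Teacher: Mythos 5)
Your overall strategy is the one the paper itself intends (its ``proof'' is a one-line deferral to the analogous verification in \cite[Proposition 2.9]{cdm}), and two of your ingredients are correct: since every position-component of a degree-$i$ element lies in $(V'[1]\oplus\mathfrak{a})_i$, Koszul signs for $\widetilde{l}_k$ and $l_k$ do coincide; and exhibiting $\Theta\circ\widetilde{l}_k=l_k\circ\Theta^{\otimes k}$ is the right way to get a strict morphism. The genuine gap is precisely the step you flag and then wave off. Declaring the $V'[1]$-slot to be ``position $0$'' does \emph{not} make the relation ``inputs of $\widetilde{l}_i$ have positions summing to $q+2i-3$'' hold uniformly: by the displayed formula, the $V'[1]$-component of $\widetilde{l}_i$ is $l_i(a_{1,0},\ldots,a_{i,0})$, whose inputs have position sum $0$, never $2i-3$. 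Hence for any Jacobiator term in which the inner operation contributes through its $V'[1]$-slot --- and such terms are unavoidable, because $l_2(x[1],y[1])$ and the $V'[1]$-part of $l_1$ are $V'[1]$-valued and feed into outer operations --- the total position sum is $p+2j-3$, which depends on the splitting $i+j=n+1$. Your master identity $\sum_s j_s=p+2n-4$ is therefore not split-independent, and grouping by it does not assemble complete Jacobi identities of $\{l_k\}$. Concretely, take $n=3$ with $x_1=x[1]$, $x_2=y[1]$ concentrated in $V'[1]$ and $x_3$ having a single nonzero $\mathfrak{a}$-component $a$ in position $m$: under the stated constraints $l_2\bigl(l_2(x[1],y[1]),a\bigr)=\pm P[[x,y],a]$ is the \emph{only} contribution to the output slot $p=m-1$, while all of its cancellation partners, such as $l_2\bigl(l_2(x[1],a),y[1]\bigr)$, $l_1\bigl(l_3(x[1],y[1],a)\bigr)$ and $l_3\bigl(l_1(x[1]),y[1],a\bigr)$, land in slots $m-2$ or $m-3$. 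So no slot receives a full Jacobi identity of $\{l_k\}$, and the slotwise expression does not vanish for a general $V$-data.

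In fairness, much of this is forced on you by the statement: the printed index constraints are inconsistent with the paper's own Maurer--Cartan computation a page later, where $[[\pi,T_1]_\mathsf{G},T_1]_\mathsf{G}$ --- built from components in positions $(0,1,1)$, index sum $2$ --- must occupy the \emph{first} $\mathfrak{a}$-slot of $\widetilde{l}_3$, although the printed constraint for that slot is $j_1+j_2+j_3=2k-2=4$. The bookkeeping that actually closes up is different: $V'[1]$-inputs must be \emph{transparent} (contribute no shift), and $\mathfrak{a}$-positions convolve additively after the shift $j\mapsto j-1$, so that any composite built from $\mathfrak{a}$-components in positions $j_s$ lands in slot $1+\sum_s(j_s-1)$, independently of the splitting, the shuffle, and whether intermediate outputs are $V'[1]$- or $\mathfrak{a}$-valued; with that convention your regrouping, and also your morphism identity (which likewise fails under the printed constraints, since assignments with $0<j_1+\cdots+j_k<2k-2$ are dropped by $\widetilde{l}_k$ but not by $l_k\circ\Theta^{\otimes k}$), do go through. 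But even then a second point remains that neither you nor the paper addresses: for a general $V$-data some composites overshoot the finitely many available slots (already $P[[[\triangle,a_1],a_2],a_3]$ with all inputs in top position lands one slot too high), so $\widetilde{l}_k$ is not a well-defined operation on $V'[1]\oplus\mathfrak{a}_c$ unless those terms vanish. They do vanish in the paper's application (there $\triangle=0$, $P|_{V'}=0$, and bidegree reasons kill the rest), but not for an arbitrary $V$-data as hypothesized. As it stands, then, the proposal has a real gap: the uniform-bookkeeping claim on which everything rests is false for the operations as printed, and repairing the argument requires both correcting the position conventions and adding (or invoking) hypotheses that eliminate the out-of-range terms.
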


This result generalizes \cite[Proposition 2.9]{cdm} from the context of graded Lie algebras to the context of $L_\infty$-algebras. The $L_\infty$ identities of $V'[1] \oplus \mathfrak{a}_c$ follows from the $L_\infty$ identities of $V'[1] \oplus \mathfrak{a}$. The verification is similar to the above mentioned reference. Hence we will repeat here. By restricting the above structure on the graded vector space $\mathfrak{a}_c$, we obtain the following.

\begin{thm}\label{v-ac}
Let $(V, \mathfrak{a}, P, \triangle)$ be a $V$-data. Suppose $V' \subset V$ is a graded Lie subalgebra that satisfies $[\triangle, V'] \subset V'$. Then the graded vector space $\mathfrak{a}_c = \oplus_{i \geq -1} (\mathfrak{a}_c)_i$, where $(\mathfrak{a}_c)_i = \underbrace{\mathfrak{a}_i \oplus \cdots \oplus \mathfrak{a}_i}_{i+2 ~\mathrm{ times}}$, inherits an $L_\infty$-algebra structure with  multilinear operations
\begin{align*}
&\widetilde{l}_k  \big( (a_{1,1}, \ldots, a_{1 ,i_1 + 2}),  (a_{2,1}, \ldots, a_{2 ,i_2 + 2}) , \ldots,  (a_{k,1}, \ldots, a_{k ,i_k + 2})        \big) \\
&= \bigg( \sum_{ \substack{j_1 + \cdots + j_k \\= 2k-2}}  l_k (a_{1, j_1}, \ldots, a_{k, j_k}), \sum_{ \substack{j_1 + \cdots + j_k \\= 2k-1}}  l_k (a_{1, j_1}, \ldots, a_{k, j_k}) , \ldots, \sum_{ \substack{j_1 + \cdots + j_k \\= i_1 + \cdots + i_k + 2k}}  l_k (a_{1, j_1}, \ldots, a_{k, j_k}) \bigg).
\end{align*}
\end{thm}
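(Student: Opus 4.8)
The plan is to derive Theorem \ref{v-ac} from Theorem \ref{vp-ac} by exhibiting $\mathfrak{a}_c$ as an $L_\infty$-subalgebra of the $L_\infty$-algebra $(V'[1] \oplus \mathfrak{a}_c, \{\widetilde{l}_k\})$ constructed there. Concretely, I would identify $\mathfrak{a}_c = \oplus_{i \geq -1}(\mathfrak{a}_c)_i$ with the graded subspace of $V'[1] \oplus \mathfrak{a}_c$ consisting of those elements whose $V'[1]$-component vanishes, that is, the elements of the form $(0, (a_1, \ldots, a_{i+2}))$ in degree $i$; this is compatible with the stated grading $(\mathfrak{a}_c)_i = \mathfrak{a}_i^{\oplus (i+2)}$. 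Once I verify that this subspace is preserved by every structure map $\widetilde{l}_k$, the $L_\infty$-relations for the restricted maps are inherited for free from those on the ambient space $V'[1] \oplus \mathfrak{a}_c$, since a graded subspace closed under all brackets of an $L_\infty$-algebra is itself an $L_\infty$-algebra. A direct comparison of formulas then shows that these restrictions are exactly the operations displayed in the statement, obtained by deleting the first ($V'[1]$-valued) slot.

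The key step is therefore the closure check. Reading off the formula in Theorem \ref{vp-ac}, the $V'[1]$-valued (zeroth) component of $\widetilde{l}_k\big((a_{1,0}, \ldots), \ldots, (a_{k,0}, \ldots)\big)$ is $l_k(a_{1,0}, \ldots, a_{k,0})$, which depends only on the $V'[1]$-components $a_{1,0}, \ldots, a_{k,0}$ of the inputs. For inputs lying in $\mathfrak{a}_c$ all these components vanish, so multilinearity gives $l_k(0, \ldots, 0) = 0$ for $k \geq 2$, while $l_1(0) = 0$ as well; hence the output again has vanishing $V'[1]$-component and lies in $\mathfrak{a}_c$. This is the $c$-analogue of the observation, implicit in Theorem \ref{v-th}, that $\mathfrak{a}$ is already a sub-$L_\infty$-algebra of $V'[1] \oplus \mathfrak{a}$.

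The one point needing genuine care — which I expect to be the main, if modest, obstacle — is to confirm that no $\mathfrak{a}$-valued input can ever feed back into the $V'[1]$-slot, i.e. that the $V'[1]$-component of $\widetilde{l}_k$ really is independent of the $\mathfrak{a}_c$-components. Here I would invoke the explicit shape of the operations of Theorem \ref{v-th}: the only operations whose output has a nonzero $V'[1]$-part are $l_2(x[1], y[1]) = (-1)^{|x|}[x,y][1]$ and the first slot $-[\,\triangle, x\,][1]$ of $l_1(x[1], a)$, both of which vanish as soon as their $V'[1]$-argument $x$ is zero; every operation of the form $l_k(x[1], a_1, \ldots, a_{k-1})$ or $l_k(a_1, \ldots, a_k)$ takes values in $\mathfrak{a}$ through the projection $P$. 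Granting this, the $V'[1]$-slot of each $\widetilde{l}_k$ is governed solely by the $V'[1]$-inputs, the closure claim follows, and $\mathfrak{a}_c$ inherits the asserted $L_\infty$-structure; this parallels, one categorical level up, the passage from the graded Lie algebra $\mathfrak{g}$ to $\mathfrak{g}_c$ in Proposition \ref{new-gla-prop}.
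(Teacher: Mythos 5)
Your proposal is correct and takes essentially the same route as the paper: Theorem \ref{v-ac} is stated there with the one-line justification ``by restricting the above structure on the graded vector space $\mathfrak{a}_c$,'' i.e.\ precisely the restriction of the $L_\infty$-structure of Theorem \ref{vp-ac} to the subspace with vanishing $V'[1]$-component. Your closure check (the $V'[1]$-slot of $\widetilde{l}_k$ is $l_k(a_{1,0},\ldots,a_{k,0})$, which vanishes on such inputs, while the terms with some $j_m=0$ in the $\mathfrak{a}$-valued sums die by multilinearity) is exactly the verification the paper leaves implicit.
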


\medskip

We will use these results in appropriate setting to study compatible structures. Let $A$ and $M$ be two vectoe spaces. Let $\mathcal{A}^{k, l}$ be the direct sum of all possible $(k+l)$ tensor powers of $A$ and $M$ in which $A$ appears $k$ times (hence $M$ appears $l$ times). For example, 
\begin{align*}
\mathcal{A}^{2,0} = A \otimes A, ~~~ \mathcal{A}^{1,1} = (A \otimes M) \oplus (M \otimes A) ~\text{ and }~ \mathcal{A}^{2,1} = (A \otimes A \otimes M) \oplus (A \otimes M \otimes A) \oplus (M \otimes A \otimes A).
\end{align*}
Note that there is a vector space isomorphism $(A \oplus M)^{\otimes n+1} ~\cong~ \oplus_{k+l =n+1} \mathcal{A}^{k, l}$. Therefore, there is an isomorphism
\begin{align*}
\mathrm{Hom} \big( (A \oplus M)^{\otimes n+1}, A \oplus M  \big) ~ \cong ~ \sum_{k+l = n+1} \mathrm{Hom} (\mathcal{A}^{k,l}, A) \oplus \sum_{k+l =n+1} \mathrm{Hom}(\mathcal{A}^{k,l}, M).
\end{align*}
A multilinear map $f \in \mathrm{Hom} \big( (A \oplus M)^{\otimes n+1}, A \oplus M  \big)$ is said to have bidegree $k | l$ with $k+l =n$, if
\begin{align*}
f (\mathcal{A}^{k+1, l}) \subset A, ~~~ f (\mathcal{A}^{k, l+1}) \subset M ~~ \text{ and } ~~ f \text{ is zero otherwise.}
\end{align*}
A multilinear map is called homogeneous if it has a bidegree. We denote the set of all homogeneous multilinear maps of bidegree $k|l$ by $\mathrm{Hom}^{k|l} (A \oplus M, A \oplus M)$. It is easy to see that
\begin{align*}
\mathrm{Hom}^{k|0} (A \oplus M, A \oplus M) ~ \cong ~& \mathrm{Hom}(A^{\otimes k+1}, A) \oplus \mathrm{Hom}(\mathcal{A}^{k, 1}, M),\\
\mathrm{Hom}^{-1| l} (A \oplus M, A \oplus M) ~ \cong ~& \mathrm{Hom}(M^{\otimes l}, A).
\end{align*}

\medskip

Next, we consider the graded vector space $V := \oplus_{n \geq 0} \mathrm{Hom} \big( (A \oplus M)^{\otimes n+1}, A \oplus M  \big)$ with the Gerstenhaber graded Lie bracket 
\begin{align*}
[f,g]_\mathsf{G} := \sum_{i=1}^{m+1} (-1)^{(i-1) n} f \circ_i g ~-~(-1)^{mn} \sum_{i=1}^{n+1} (-1)^{(i-1) m} g \circ_i f, ~\text{ where }
\end{align*}
\begin{align*}
(f \circ_i g) (v_1, \ldots, v_{m+n+1}) =f \big( v_1, \ldots, v_{i-1}, g(v_i, \ldots, v_{i+n}), \ldots, v_{m+n+1}   \big),
\end{align*}
for $f \in \mathrm{Hom} \big( (A \oplus M)^{\otimes m+1}, A \oplus M  \big)$, $g \in \mathrm{Hom} \big( (A \oplus M)^{\otimes n+1}, A \oplus M  \big)$ and $v_1, \ldots, v_{m+n+1} \in A \oplus M$. Then it has been observed by Uchino \cite{U10} that
\begin{align*}
&V' = \oplus_{k \geq 0} \mathrm{Hom}^{k|0} (A \oplus M, A \oplus M) = \oplus_{k \geq 0} \big(   \mathrm{Hom}(A^{\otimes k+1}, A) \oplus \mathrm{Hom}(\mathcal{A}^{k, 1}, M) \big) \subset V \\
& \text{ is a graded Lie subalgebra,}  \\
&\mathfrak{a} = \oplus_{l \geq 0} \mathrm{Hom}^{-1|l+1} (A \oplus M, A \oplus M) = \oplus_{l \geq 0} \mathrm{Hom}(M^{\otimes l+1}, A) \subset V \text{ is an abelian subalgebra.}
\end{align*}
Let $P: V \rightarrow \mathfrak{a}$ be the projection onto the subspace $\mathfrak{a}$. Then $\mathrm{ker} (P) \subset V$ is a graded Lie subalgebra. With all these notations, the quadruple $(V, \mathfrak{a}, P, \triangle = 0)$ is a $V$-data. Moreover, $V'$ satisfy the assumptions of Theorem \ref{v-th}. Hence one obtains an $L_\infty$-algebra structure on the graded vector space $V'[1] \oplus \mathfrak{a} = \oplus_{i \geq 0} (V'[1] \oplus \mathfrak{a})_i$ with the structure maps
\begin{align*}
l_1 (f[1], \theta) =~& P(f),\\
l_2 (f[1], g[1]) =~& (-1)^{|f|}~ [f, g]_\mathsf{G} [1],\\
l_k (f[1], \theta_1, \ldots, \theta_{k-1}) =~& P [\cdots [[ f , \theta_1]_\mathsf{G}, \theta_2 ]_\mathsf{G}, \ldots, \theta_{k-1}]_\mathsf{G}, ~\text{ for } k \geq 2
\end{align*}
and up to the permutations of the above entries, other multilinear operations vanish.

Next, suppose the vector spaces $A$ and $M$ are equipped with the following maps
\begin{align*}
\mu \in \mathrm{Hom}(A^{\otimes 2}, A), ~~~~ l \in \mathrm{Hom} (A \otimes M , M), ~~~~ r \in \mathrm{Hom}(M \otimes A, M) ~~\text{ and } ~~ T \in \mathrm{Hom}(M,A).
\end{align*}
Consider the element $\pi = \mu + l  + r \in \mathrm{Hom}^{1|0} (A \oplus M, A \oplus M) = V_1'$. Note that $\pi$ can be considered as an element $\pi [1] \in (V'[1])_0.$ Then the following result has been proved in \cite{DasMishra}.

\begin{thm}
With all the above notations, the triple $\big(  A= (A, \mu), M = (M, l, r), T  \big)$ is an $\mathcal{O}$-operator algebra if and only if the element $ (\pi [1], T) \in (V'[1] \oplus \mathfrak{a})_0$ is a Maurer-Cartan element in the $L_\infty$-algebra $(V'[1] \oplus \mathfrak{a}, \{ l_k \}^\infty_{k=1})$.
\end{thm}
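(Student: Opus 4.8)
The plan is to expand the Maurer--Cartan equation $\sum_{k=1}^\infty \tfrac{1}{k!}\, l_k(\alpha,\dots,\alpha)=0$ for $\alpha=(\pi[1],T)\in (V'[1]\oplus\mathfrak{a})_0$ and to match its two homogeneous components, one valued in $V'[1]$ and one valued in $\mathfrak{a}$, against the defining conditions of an $\mathcal{O}$-operator algebra. First I would expand each $l_k(\alpha,\dots,\alpha)$ by multilinearity, writing $\alpha$ as the sum of its two components and grouping the resulting terms by how many of the $k$ arguments carry $\pi[1]$ and how many carry $T$. Since every entry sits in degree $0$ the Koszul signs are trivial, so graded symmetry replaces each group by one representative times a binomial coefficient. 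Using the structure maps of $V'[1]\oplus\mathfrak{a}$ displayed above, a term survives only when exactly one argument is $\pi[1]$ (apart from the pure term $l_2(\pi[1],\pi[1])$): for $k\geq 2$ the operation $l_k$ needs precisely one $V'[1]$-entry, and the purely $\mathfrak{a}$-valued operation vanishes because $\triangle=0$.

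The $V'[1]$-component then receives a contribution only from $l_2(\pi[1],\pi[1])$, so it equals a nonzero multiple of $[\pi,\pi]_\mathsf{G}$ and the equation reads $[\pi,\pi]_\mathsf{G}=0$; note also $l_1(\alpha)=P(\pi)=0$ since $\pi$ has bidegree $1|0$ and hence lies outside $\mathfrak{a}$. Here I would invoke the classical fact that, for $\pi=\mu+l+r$ of bidegree $1|0$, the equation $[\pi,\pi]_\mathsf{G}=0$ is precisely the condition that $\pi$ defines a square-zero extension of $A$ by $M$; unpacking it gives exactly associativity of $\mu$ together with the left, right, and compatibility axioms making $(M,l,r)$ an $A$-bimodule.

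For the $\mathfrak{a}$-component, collecting for each $k\geq 2$ the $k$ terms with a single $\pi[1]$ and using $\tfrac{1}{k!}\cdot k=\tfrac{1}{(k-1)!}$ yields $\sum_{m\geq 1}\tfrac{1}{m!}\,P[\cdots[[\pi,T]_\mathsf{G},T]_\mathsf{G},\dots,T]_\mathsf{G}$ with $m$ copies of $T$. The crucial observation is that the Gerstenhaber bracket is additive on bidegrees: $[\pi,T]_\mathsf{G}$ has bidegree $0|1$, $[[\pi,T]_\mathsf{G},T]_\mathsf{G}$ has bidegree $-1|2$, and a third bracket with $T$ (of bidegree $-1|1$) would land in the impossible bidegree $-2|3$ and so vanish. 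Since $\mathfrak{a}$ is exactly the part of bidegree $-1|\bullet$, the $m=1$ term is killed by $P$, the $m=2$ term already lies in $\mathfrak{a}$ (so $P$ is the identity on it), and all $m\geq 3$ terms vanish; thus the $\mathfrak{a}$-component reduces to the single equation $[[\pi,T]_\mathsf{G},T]_\mathsf{G}=0$.

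It then remains to evaluate this bracket on $u,v\in M$. I would first compute $\Phi:=[\pi,T]_\mathsf{G}$ from the circle products, using that $T$ vanishes on $A$ and that $\pi$ vanishes on $M\otimes M$; this gives $\Phi(a,v)=a\cdot T(v)-T(a\cdot v)$, $\Phi(u,b)=T(u)\cdot b-T(u\cdot b)$ for $a,b\in A$, and $\Phi(u,v)=T(u)\cdot v+u\cdot T(v)$. Substituting into $[\Phi,T]_\mathsf{G}(u,v)=\Phi(T(u),v)+\Phi(u,T(v))-T(\Phi(u,v))$ produces $2\big(T(u)\cdot T(v)-T(T(u)\cdot v+u\cdot T(v))\big)$, so $[[\pi,T]_\mathsf{G},T]_\mathsf{G}=0$ is exactly the $\mathcal{O}$-operator identity for $T$. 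Combining the two components gives the equivalence. I expect the main difficulty to be organizational rather than conceptual: keeping the combinatorial coefficients straight (so that the factor $k$ counting placements of $\pi[1]$ cancels $1/k!$) and justifying bidegree-additivity of the Gerstenhaber bracket, which is what forces the otherwise infinite $\mathfrak{a}$-series to stop after the single term $[[\pi,T]_\mathsf{G},T]_\mathsf{G}$; the final bracket evaluation is then a short, sign-sensitive calculation.
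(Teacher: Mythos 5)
Your proposal is correct and follows essentially the same route as the paper's treatment: the paper delegates this statement to \cite{DasMishra}, but its own proof of the compatible analogue (the very next theorem) proceeds exactly as you do --- expand the Maurer--Cartan sum, observe that only $l_2(\pi[1],\pi[1])$ and the terms with a single $\pi[1]$ survive (the rest dying for bidegree reasons), and identify $[\pi,\pi]_\mathsf{G}=0$ with the algebra/bimodule axioms and $[[\pi,T]_\mathsf{G},T]_\mathsf{G}=0$ with the $\mathcal{O}$-operator identity. The only difference is that where the paper cites \cite{D20} for these last two equivalences, you verify them by direct computation of the circle products (and your formulas, including the factor $2$ in $[[\pi,T]_\mathsf{G},T]_\mathsf{G}(u,v)$ and the cancellation $\tfrac{1}{k!}\cdot k=\tfrac{1}{(k-1)!}$, are correct), which makes the argument self-contained.
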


Let $\big(  A= (A, \mu), M = (M, l, r), T  \big)$ be a given $\mathcal{O}$-operator algebra. Consider the Maurer-Cartan element $(\pi [1], T) \in (V'[1] \oplus \mathfrak{a})_0$ in the $L_\infty$-algebra $(V'[1] \oplus \mathfrak{a}, \{ l_k \}^\infty_{k=1})$. Thus, we may consider the cochain complex $\{ C^\bullet_\mathrm{OA} (A, M, T), \delta_\mathrm{OA} \}$ induced by the Maurer-Cartan element, where $C^0_\mathrm{OA} (A, M, T) = 0$ and
\begin{align*}
 C^n_\mathrm{OA} (A,M,T) = \begin{cases}  (V'[1])_{-1} = \mathrm{Hom}(A, A) \oplus \mathrm{Hom}(M,M)  & \text{ if } n =1 \\\\
(V'[1] \oplus \mathfrak{a})_{n-2} & \text{ if } n \geq 2 \\
= \mathrm{Hom}(A^{\otimes n}, A) \oplus \mathrm{Hom}(\mathcal{A}^{n-1,1}, M) \oplus \mathrm{Hom}(M^{\otimes n-1}, A) \end{cases}
\end{align*} 
and the coboundary map $\delta_\mathrm{OA} :  C^n_\mathrm{OA} (A,M,T) \rightarrow  C^{n+1}_\mathrm{OA} (A,M,T)$ given by
\begin{align*}
\delta_\mathrm{OA} (f,P) = (-1)^{n-2}~ l_1^{(\pi [1],T)} (f[1], P), \text{ for } f \in V'_{n-1},~ P \in \mathfrak{a}_{n-2}.
\end{align*}
The corresponding cohomology groups are denoted by $H^\bullet_\mathrm{OA} (A, M, T)$ and they are called the cohomology of the $\mathcal{O}$-operator algebra $(A, M, T)$. See \cite{DasMishra} for more details.

\medskip

In the following, we use Theorem \ref{vp-ac} to construct an $L_\infty$-algebra whose Maurer-Cartan elements are compatible $\mathcal{O}$-operator algebras.

\begin{thm}
Let $A$ and $M$ be two vector spaces. Suppose there are maps $\mu \in \mathrm{Hom}(A^{\otimes 2}, A)$, $l \in \mathrm{Hom}(A \otimes M, M)$, $r \in \mathrm{Hom}(M \otimes A, M)$ and $T_1, T_2 \in \mathrm{Hom}(M,A)$. Then $(A = (A, \mu), M = (M, l, r), (T_1, T_2))$ is a compatible $\mathcal{O}$-operator algebra if and only if $(\pi [1], (T_1, T_2)) \in (V'[1] \oplus \mathfrak{a}_c)_0$ is a Maurer-Cartan element in the $L_\infty$-algebra $(V'[1] \oplus \mathfrak{a}_c, \{ \widetilde{l}_k \}_{k=1}^\infty)$.
\end{thm}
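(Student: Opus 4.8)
The plan is to expand the Maurer--Cartan equation
$\sum_{k\ge 1}\frac{1}{k!}\,\widetilde{l}_k\big((\pi[1],(T_1,T_2))^{\otimes k}\big)=0$ componentwise, using the explicit operations $\widetilde{l}_k$ from Theorem \ref{vp-ac}, and to match each resulting component against the defining conditions of a compatible $\mathcal{O}$-operator algebra. Since $(\pi[1],(T_1,T_2))$ lies in degree $0$, the Maurer--Cartan expression lies in $(V'[1]\oplus\mathfrak{a}_c)_1 = V'_2\oplus(\mathfrak{a}_1\oplus\mathfrak{a}_1\oplus\mathfrak{a}_1)$; that is, it has one $V'[1]$-component and three $\mathfrak{a}$-components, exactly the number of scalar conditions expected (associativity and the bimodule axioms, two $\mathcal{O}$-operator conditions, and one compatibility).

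First I would treat the $V'[1]$-component. By Theorem \ref{vp-ac} this component of $\widetilde{l}_k$ is $l_k(\pi[1],\dots,\pi[1])$, and since $\triangle=0$ the only nonvanishing all-$V'[1]$ operation is $l_2(\pi[1],\pi[1])=(-1)^{|\pi|}[\pi,\pi]_\mathsf{G}[1]$, all $l_k$ with $k\ge 3$ vanishing on purely $V'[1]$-arguments. Hence the $V'[1]$-component reduces to $\tfrac12(-1)^{|\pi|}[\pi,\pi]_\mathsf{G}[1]$, whose vanishing (independently of the sign) says precisely that $\mu$ is associative and $(M,l,r)$ is an $A$-bimodule. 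This is identical to the $V'[1]$-component appearing in the single $\mathcal{O}$-operator algebra characterization recalled just before Theorem \ref{vp-ac}.

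Next I would analyse the three $\mathfrak{a}$-components, where the relevant operations are $l_k(\pi[1],\theta_1,\dots,\theta_{k-1})=P[\cdots[[\pi,\theta_1]_\mathsf{G},\theta_2]_\mathsf{G},\dots,\theta_{k-1}]_\mathsf{G}$ with each $\theta$ equal to $T_1$ or $T_2$. Two bookkeeping facts drive the computation. (i) Since $[\pi,T_a]_\mathsf{G}$ has bidegree $0|1$ while $P$ projects onto bidegree $-1|\ast$, one has $P[\pi,T_a]_\mathsf{G}=0$, so there is no $k=2$ contribution; and $[[\pi,T_a]_\mathsf{G},T_b]_\mathsf{G}$ lies in the abelian subalgebra $\mathfrak{a}$, so a further bracket with any $T_c\in\mathfrak{a}$ vanishes. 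Thus $l_k$ vanishes on our arguments for all $k\ge 4$, and the series truncates: only $k=3$ survives on the $\mathfrak{a}$-side. (ii) The weighted summation in $\widetilde{l}_3$ routes the quadratic terms $l_3(\pi[1],T_a,T_b)=P[[\pi,T_a]_\mathsf{G},T_b]_\mathsf{G}$ into the three slots according to the total weight $a+b\in\{2,3,4\}$ (with $T_1$ of weight $1$ and $T_2$ of weight $2$). Collecting coefficients, the three $\mathfrak{a}$-components are, up to nonzero scalars,
\[
P[[\pi,T_1]_\mathsf{G},T_1]_\mathsf{G},\qquad
P\big([[\pi,T_1]_\mathsf{G},T_2]_\mathsf{G}+[[\pi,T_2]_\mathsf{G},T_1]_\mathsf{G}\big),\qquad
P[[\pi,T_2]_\mathsf{G},T_2]_\mathsf{G}.
\]

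Finally I would identify these expressions. Once $[\pi,\pi]_\mathsf{G}=0$ is known, the single $\mathcal{O}$-operator characterization shows that $P[[\pi,T_i]_\mathsf{G},T_i]_\mathsf{G}=0$ is exactly the assertion that $T_i$ is an $\mathcal{O}$-operator, for $i=1,2$; the middle, mixed condition is then the bilinear polarization of that quadratic relation, equivalent to $T_1+T_2$ being an $\mathcal{O}$-operator, i.e. to the compatibility relation. Together the four vanishing conditions constitute precisely the data of a compatible $\mathcal{O}$-operator algebra $(A,M,(T_1,T_2))$, and the argument is reversible. (As a consistency check, applying the $L_\infty$-morphism $\Theta$ of Theorem \ref{vp-ac} sends this Maurer--Cartan element to $(\pi[1],T_1+T_2)$, recovering the total $\mathcal{O}$-operator.) I expect the main obstacle to be step (ii): verifying that the weight-graded summation defining $\widetilde{l}_k$ genuinely separates the three quadratic brackets into the three slots with the correct nonzero coefficients and Koszul signs, and confirming that no cross-terms or higher operations survive. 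All remaining identifications follow directly from Theorem \ref{vp-ac} and the single $\mathcal{O}$-operator case.
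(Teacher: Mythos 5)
Your proposal is correct and follows essentially the same route as the paper's proof: expand the Maurer--Cartan equation, observe that only $\widetilde{l}_2$ (giving $[\pi,\pi]_\mathsf{G}=0$) and $\widetilde{l}_3$ (giving the three components $[[\pi,T_1]_\mathsf{G},T_1]_\mathsf{G}$, $[[\pi,T_1]_\mathsf{G},T_2]_\mathsf{G}+[[\pi,T_2]_\mathsf{G},T_1]_\mathsf{G}$, $[[\pi,T_2]_\mathsf{G},T_2]_\mathsf{G}$) survive, and identify these with associativity, the bimodule axioms, the two $\mathcal{O}$-operator conditions, and the compatibility. Your justifications for the truncation (the bidegree argument killing $P[\pi,T_a]_\mathsf{G}$ and the abelianness of $\mathfrak{a}$ killing $k\geq 4$) are in fact more explicit than the paper's appeal to ``degree reasons,'' and your polarization argument for the mixed slot is equivalent to the paper's identification of that slot with the derived-bracket condition $[T_1,T_2]=0$.
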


\begin{proof}
For the element $(\pi [1], (T_1, T_2)) \in (V'[1] \oplus \mathfrak{a}_c)_0$, we have
\begin{align*}
\widetilde{l}_2 \big( (\pi [1], (T_1, T_2)), (\pi [1], (T_1, T_2))  \big) = - [\pi, \pi ]_\mathsf{G} [1],
\end{align*}
\begin{align*}
\widetilde{l}_3 &\big( (\pi [1], (T_1, T_2)), (\pi [1], (T_1, T_2)), (\pi [1], (T_1, T_2))   \big) \\&= \big( 0, [[\pi, T_1]_\mathsf{G} , T_1 ]_\mathsf{G}, [[\pi, T_1]_\mathsf{G} , T_2 ]_\mathsf{G} + [[\pi, T_2]_\mathsf{G} , T_1 ]_\mathsf{G}, [[\pi, T_2]_\mathsf{G} , T_2 ]_\mathsf{G} \big).
\end{align*}
Note that, for the degree reason, the higher multilinear maps $\widetilde{l}_k$ (for $k \geq 4$) are trivial when applying $k$ times the element $(\pi [1], (T_1, T_2)) \in (V'[1] \oplus \mathfrak{a}_c)_0$ into $\widetilde{l}_k$. Thus,
\begin{align*}
&\sum_{k=1}^\infty \frac{1}{k !} \widetilde{l}_k \big( (\pi [1], (T_1, T_2)), \ldots, (\pi [1], (T_1, T_2))   \big) \\
&= \frac{1}{2} ~ \widetilde{l}_2 \big( (\pi [1], (T_1, T_2)), (\pi [1], (T_1, T_2))  \big) + \frac{1}{6}~ \widetilde{l}_3 \big( (\pi [1], (T_1, T_2)), (\pi [1], (T_1, T_2)), (\pi [1], (T_1, T_2))   \big)  \\
&= \big( - \frac{1}{2} [\pi, \pi ]_\mathsf{G} [1], ~\frac{1}{6} [[\pi, T_1]_\mathsf{G} , T_1 ]_\mathsf{G}, ~\frac{1}{3} [[\pi, T_1]_\mathsf{G} , T_2 ]_\mathsf{G} , ~\frac{1}{6} [[\pi, T_2]_\mathsf{G} , T_2 ]_\mathsf{G}   \big).
\end{align*}
This is zero if and only if all the components are zero. Hence $(\pi [1], (T_1, T_2))$ is a Maurer-Cartan element if and only if $[\pi, \pi ]_\mathsf{G} = 0$, $[[\pi, T_1]_\mathsf{G} , T_1 ]_\mathsf{G} = 0$, $[[\pi, T_1]_\mathsf{G} , T_2 ]_\mathsf{G} = 0$ and $[[\pi, T_2]_\mathsf{G} , T_2 ]_\mathsf{G} = 0$.

On the other hand, the triple $(A = (A, \mu), M= (M, l, r), (T_1, T_2))$ is a compatible $\mathcal{O}$-operator algebra if and only if $A= (A, \mu)$ is an associative algebra, $M = (M, l, r)$ is an $A$-bimodule and $(T_1, T_2)$ is a compatible $\mathcal{O}$-operator on $M$ over the algebra $A$. Note that $(A, \mu)$ is an associative algebra and $(M, l, r)$ is an $A$-bimodule if and only if the element $\pi = \mu + l + r$ satisfies $[\pi, \pi]_G = 0$ (see \cite{D20}). Finally, the pair $(T_1, T_2)$ is a compatible $\mathcal{O}$-operator on $M$ over the algebra $A$ if and only if $\llbracket (T_1, T_2), (T_1, T_2) \rrbracket = 0$, or equivalently, $[T_1, T_1] = 0$, $[T_1, T_2] = 0$ and $[T_2, T_2] = 0$. In view of the Gerstenhaber bracket (see \cite{D20}), these conditions are respectively equivalent to $[[\pi, T_1]_\mathsf{G} , T_1 ]_\mathsf{G} = 0$, $[[\pi, T_1]_\mathsf{G} , T_2 ]_\mathsf{G} = 0$ and $[[\pi, T_2]_\mathsf{G} , T_2 ]_\mathsf{G} = 0$. Hence the result follows.
\end{proof}

Let $(A = (A, \mu), M = (M, l, r), (T_1, T_2))$ be a compatible $\mathcal{O}$-operator algebra. It follows from the above theorem that the element $ (\pi [1], (T_1, T_2)) \in (V'[1] \oplus \mathfrak{a}_c)_0$ is a Maurer-Cartan element in the $L_\infty$-algebra $(V'[1] \oplus \mathfrak{a}_c, \{ l_k \}_{k=1}^\infty)$. Consider the corresponding cochain complex induced by the Maurer-Cartan element $ (\pi [1], (T_1, T_2))$. More precisely, we consider the complex $\{ C^\bullet_{\mathrm{cOA}} (A, M, (T_1, T_2)) , \delta_\mathrm{cOA} \}$, where $C^0_{\mathrm{cOA}} (A, M, (T_1, T_2)) = 0$ and
\begin{align*}
C^n_{\mathrm{cOA}} (A, M, (T_1, T_2)) = \begin{cases}  (V'[1])_{-1} = \mathrm{Hom}(A, A) \oplus \mathrm{Hom}(M,M)  \qquad \qquad \text{ if } n =1 \\\\
(V'[1] \oplus \mathfrak{a}_c)_{n-2}  \qquad \qquad \qquad \text{ if } n \geq 2 \\
= \mathrm{Hom}(A^{\otimes n}, A) \oplus \mathrm{Hom}(\mathcal{A}^{n-1,1}, M) \oplus \underbrace{\mathrm{Hom}(M^{\otimes n-1}, A) \oplus \cdots \oplus \mathrm{Hom}(M^{\otimes n-1}, A)}_{n \text{ times}} \end{cases}
\end{align*}
and the coboundary map $\delta_\mathrm{cOA} : C^n_{\mathrm{cOA}} (A, M, (T_1, T_2)) \rightarrow C^{n+1}_{\mathrm{cOA}} (A, M, (T_1, T_2))$ given by
\begin{align*}
\delta_\mathrm{cOA} \big( f, (P_1, \ldots, P_n) \big) := (-1)^{n-2} ~ l_1^{(\pi[1], (T_1, T_2))} \big( f[1], (P_1, \ldots, P_n) \big), \text{ for } f \in V'_{n-1},~ (P_1, \ldots, P_n) \in (\mathfrak{a}_c)_{n-2}.
\end{align*}

\medskip

Let $Z^n_{\mathrm{cOA}} (A, M, (T_1, T_2))$ be the space of all $n$-cocycles and $B^n_{\mathrm{cOA}} (A, M, (T_1, T_2))$ be the space of all $n$-coboundaries. Then we have $B^n_{\mathrm{cOA}} (A, M, (T_1, T_2)) \subset Z^n_{\mathrm{cOA}} (A, M, (T_1, T_2))$, for all $n \geq 0$. The corresponding quotient groups
\begin{align*}
H^n_{\mathrm{cOA}} (A, M, (T_1, T_2)) := \frac{Z^n_{\mathrm{cOA}} (A, M, (T_1, T_2))}{B^n_{\mathrm{cOA}} (A, M, (T_1, T_2))}, ~\text{ for } n \geq 0
\end{align*}
are called the cohomology of the compatible $\mathcal{O}$-operator algebra $(A,M, (T_1, T_2)).$

\medskip

Let $(A,M, (T_1, T_2))$ be a compatible $\mathcal{O}$-operator algebra. Consider the Maurer-Cartan element $\alpha = (\pi [1], (T_1, T_2)) \in (V'[1] \oplus \mathfrak{a}_c)_0$ in the $L_\infty$-algebra $V'[1] \oplus \mathfrak{a}_c$. On the other hand, the triple $(A, M, T_1 + T_2)$ is an $\mathcal{O}$-operator algebra. Hence there is a Maurer-Cartan element $\alpha^\mathrm{Tot} = (\pi [1], T_1 + T_2) \in (V'[1] \oplus \mathfrak{a})_0$ in the $L_\infty$-algebra $V'[1] \oplus \mathfrak{a}$. Therefore, if we consider the $L_\infty$-algebra morphism 
\begin{align*}
\{ \Theta_i : (V'[1] \oplus \mathfrak{a}_c)_i \rightarrow (V'[1] \oplus \mathfrak{a})_i \}_{i=0}^\infty
\end{align*}
given in Theorem \ref{vp-ac}, then we have $\Theta_0 (\alpha) = \alpha^\mathrm{Tot}$. As a consequence, we obtain a morphism of cochain complexes from $\{ C^\bullet_\mathrm{cOA} (A, M, (T_1, T_2)) , \delta_\mathrm{cOA} \}$ to $\{ C^\bullet_\mathrm{OA} (A, M, T_1 + T_2) , \delta_\mathrm{OA} \}$. Hence we get the following result.

\begin{thm}
Let $(A,M, (T_1, T_2))$ be a compatibel $\mathcal{O}$-operator algebra. Then there is a morphism of groups $H^\bullet_\mathrm{cOA} (A, M, (T_1, T_2)) \rightarrow H^\bullet_\mathrm{OA} (A, M, T_1 + T_2)$ from the cohomology of the compatible $\mathcal{O}$-operator algebra $(A,M, (T_1, T_2))$ to the cohomology of the $\mathcal{O}$-operator $(A, M, T_1 + T_2)$.
\end{thm}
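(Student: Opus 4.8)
The plan is to realize the desired map on cohomology as the one induced by the $L_\infty$-morphism $\Theta$ of Theorem~\ref{vp-ac}. Recall that both cohomologies in the statement are, by construction, cohomologies of cochain complexes twisted by Maurer--Cartan elements: $H^\bullet_{\mathrm{cOA}}(A,M,(T_1,T_2))$ arises from the twisted differential $l_1^{\alpha}$ attached to $\alpha=(\pi[1],(T_1,T_2))$ in the $L_\infty$-algebra $V'[1]\oplus\mathfrak{a}_c$, while $H^\bullet_{\mathrm{OA}}(A,M,T_1+T_2)$ arises from $l_1^{\alpha^{\mathrm{Tot}}}$ attached to $\alpha^{\mathrm{Tot}}=(\pi[1],T_1+T_2)$ in $V'[1]\oplus\mathfrak{a}$. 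Since $\Theta_0(\alpha)=\alpha^{\mathrm{Tot}}$, the heart of the argument is to show that $\Theta$ intertwines these two twisted differentials, so that it is a morphism of cochain complexes and therefore descends to cohomology.

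First I would record the relevant general fact. The maps $\Theta_i$ of Theorem~\ref{vp-ac} assemble into a single degree-preserving linear map; that is, $\Theta$ is a \emph{strict} $L_\infty$-morphism (all its higher Taylor components vanish), the $L_\infty$ analogue of the strict graded Lie algebra morphism of Proposition~\ref{new-gla-prop}(ii). Strictness gives $\Theta\big(\widetilde{l}_k(\xi_1,\ldots,\xi_k)\big)=l_k(\Theta\xi_1,\ldots,\Theta\xi_k)$ for all $k$. Applying $\Theta$ to $l_1^{\alpha}(\xi)=\sum_{i\geq 0}\frac{1}{i!}\,\widetilde{l}_{i+1}(\alpha,\ldots,\alpha,\xi)$ and pushing it through each summand using strictness together with $\Theta(\alpha)=\alpha^{\mathrm{Tot}}$ yields $\Theta\big(l_1^{\alpha}(\xi)\big)=l_1^{\alpha^{\mathrm{Tot}}}\big(\Theta(\xi)\big)$. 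Thus $\Theta$ commutes with the twisted differentials.

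It then remains to match this with the explicit coboundary operators $\delta_\mathrm{cOA}$ and $\delta_\mathrm{OA}$. Each of these is, by definition, the twisted differential on the degree $(n-2)$ piece multiplied by the sign $(-1)^{n-2}$. Because $\Theta$ preserves the internal degree, the component $\Theta_{n-2}\colon C^n_{\mathrm{cOA}}\to C^n_{\mathrm{OA}}$ carries the same sign prefactor on both sides, so the intertwining of $l_1^{\alpha}$ and $l_1^{\alpha^{\mathrm{Tot}}}$ upgrades directly to $\Theta\circ\delta_\mathrm{cOA}=\delta_\mathrm{OA}\circ\Theta$. In the boundary degree $n=1$, both complexes coincide with $(V'[1])_{-1}=\mathrm{Hom}(A,A)\oplus\mathrm{Hom}(M,M)$ and $\Theta$ restricts to the identity there, so compatibility is immediate. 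Consequently $\Theta$ is a morphism of cochain complexes and induces the required map $H^\bullet_{\mathrm{cOA}}(A,M,(T_1,T_2))\to H^\bullet_{\mathrm{OA}}(A,M,T_1+T_2)$.

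The main obstacle is bookkeeping rather than conceptual: one must confirm that $\Theta$ is genuinely strict (so that the clean intertwining formula holds rather than a more intricate $L_\infty$-morphism relation), and that the degree shifts and signs in the two definitions of the coboundary operators line up, including the slightly exceptional $n=1$ term where the $\mathfrak{a}_c$ and $\mathfrak{a}$ summands are absent. Once strictness and degree-preservation of $\Theta$ are in hand, the chain-map property is forced and no further computation with the explicit brackets is needed.
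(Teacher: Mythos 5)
Your proposal is correct and follows essentially the same route as the paper: both realize the map on cohomology via the $L_\infty$-morphism $\Theta$ of Theorem~\ref{vp-ac}, using $\Theta_0(\alpha)=\alpha^{\mathrm{Tot}}$ to conclude that $\Theta$ is a chain map between the twisted complexes. Your write-up merely makes explicit the details the paper leaves implicit, namely that $\Theta$ is strict (so it intertwines $l_1^{\alpha}$ and $l_1^{\alpha^{\mathrm{Tot}}}$) and that the sign conventions and the exceptional degree $n=1$ cause no trouble.
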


\medskip

\noindent {\bf 3.C. Compatible homotopy $\mathcal{O}$-operators.} The notion of homotopy $\mathcal{O}$-operators was introduced in \cite{DasMishra} as the homotopy analogue of $\mathcal{O}$-operators. In this section, we define compatible homotopy $\mathcal{O}$-operators as the homotopy analogue of compatible $\mathcal{O}$-operators. A compatible homotopy $\mathcal{O}$-operator is a pair consisting of two homotopy $\mathcal{O}$-operators satisfying suitable compatibilities. 

We first recall some definitions and basic results about $A_\infty$-algebras and homotopy bimodule over them.

\begin{defn}
An $A_\infty$-algebra is a pair $(A = \oplus_{i \in \mathbb{Z}} A_i, \{ \mu_k \}_{k=1}^\infty)$ consisting of a graded vector space $A = \oplus_{i \in \mathbb{Z}} A_i$ with a collection $\{ \mu_k \}_{k=1}^\infty$ of degree $1$ multilinear maps $\mu_k : A^{\otimes k} \rightarrow A$ (for $k \geq 1$) satisfying the following set of identities
\begin{align}\label{a-inf-iden}
\sum_{i+j = n+1} \sum_{\lambda =1}^j (-1)^{|a_1| + \cdots + |a_{\lambda -1}|}~ \mu_j \big(    a_1, \ldots, a_{\lambda -1} , \mu_i (a_\lambda, \ldots, a_{\lambda+i-1}), a_{\lambda +i}, \ldots, a_n \big) =0,
\end{align}
for $a_1, \ldots, a_n \in A$ and $n \geq 1$.
\end{defn}

Let $(A = \oplus_{i \in \mathbb{Z}} A_i, \{ \mu_k \}_{k=1}^\infty)$ be an $A_\infty$-algebra. A homotopy bimodule over this $A_\infty$-algebra consists of a pair $(M = \oplus_{i \in \mathbb{Z}} M_i, \{ \eta_k \}_{k=1}^\infty)$ of a graded vector space $M = \oplus_{i \in \mathbb{Z}} M_i$ and a collection $\{ \eta_k \}_{k=1}^\infty$ of degree $1$ multilinear maps $\eta_k : \mathcal{A}^{k-1,1} \rightarrow M$ (for $k \geq 1$) satisfying the set of identities  (\ref{a-inf-iden}) with exactly one of the variables $a_1, \ldots, a_n$ comes from $M$ and the  corresponding multilinear map $\mu_i$ or $\mu_j$ replaced by $\eta_i$ or $\eta_j$. Like ungraded case, here $\mathcal{A}^{k-1, 1}$ denotes the direct sum of all possible $k$ tensor powers of $A$ and $M$, in which $A$ appears $k-1$ times (hence $M$ appears exactly once).

Given an $A_\infty$-algebra and a homotopy bimodule over it, we construct an $L_\infty$-algebra following Theorem \ref{v-ac}. This new $L_\infty$-algebra is suitable to study compatible homotopy $\mathcal{O}$-operators. Let $(A, \{ \mu_k \}_{k=1}^\infty)$ be an $A_\infty$-algebra and $(M, \{ \eta_k \}_{k=1}^\infty)$ be a homotopy bimodule over it. We consider the graded Lie algebra 
\begin{align*}
V = \big(  \oplus_{n \in \mathbb{Z}} \mathrm{Hom}^n (\overline{T}(A \oplus M), A \oplus M), [~,~]_\mathsf{G}  \big),
\end{align*}
where an element $f \in \mathrm{Hom}^n (\overline{T}(A \oplus M), A \oplus M)$ is given by a formal summation $f = \sum_{k \geq 1} f_k$ with $f_k \in \mathrm{Hom}^n ((A \oplus M)^{\otimes k} , A \oplus M )$ is a $k$-ary multilinear map of degree $n$. The bracket $[~,~]_\mathsf{G}$ is given by
\begin{align*}
[f_k , g_l ]_\mathsf{G} (v_1, \ldots, v_{k+l-1}) = \sum_{i=1}^k (-1)^{n (|v_1| + \cdots + |v_{i-1}|)} ~ f_k \big( v_1, \ldots, v_{i-1}, g_l (v_i, \ldots, v_{i+l-1}), v_{i+l}, \ldots, v_{k+l-1}    \big)\\
- (-1)^{mn} \sum_{i=1}^l (-1)^{m (|v_1| + \cdots + |v_{i-1}|)} ~ g_l \big( v_1, \ldots, v_{i-1}, f_k (v_i, \ldots, v_{i+k-1}), v_{i+k}, \ldots, v_{k+l-1}    \big),
\end{align*}
for $f_k \in \mathrm{Hom}^m ((A \oplus M)^{\otimes k}, A \oplus M)$, $g_l \in \mathrm{Hom}^n ((A \oplus M)^{\otimes l}, A \oplus M)$ and $v_1, \ldots, v_{k+l-1} \in A \oplus M$. Then we can easily observe that $\mathfrak{a} = \oplus_{n \in \mathbb{Z}} \mathrm{Hom}^n (\overline{T}(M), A)$ is an abelian subalgebra of $V$. Let $P : V \rightarrow \mathfrak{a}$ be the projection onto the subspace $\mathfrak{a}$. We also consider the element $\triangle = \sum_{k \geq 1} (\mu_k + \eta_k) \in \mathrm{ker} (P)_1$. Note that the $A_\infty$-algebra identities of $(A, \{ \mu_k \}_{k=1}^\infty)$ and the homotopy bimodule conditions of $(M, \{ \eta_k \}_{k=1}^\infty)$ is simply equivalent to the condition $[\triangle, \triangle]_\mathsf{G} = 0$. As a summary, we obtain a $V$-data $(V, \mathfrak{a}, P, \triangle)$. Hence by Theorem \ref{v-ac}, we get an $L_\infty$-algebra structure on $\mathfrak{a}_c$. In the ungraded case, this $L_\infty$-algebra turns out to be the graded Lie algebra given in (\ref{new-gla}). This generality allows one to define compatible homotopy $\mathcal{O}$-operator as a Maurer-Cartan element in the $L_\infty$-algebra $\mathfrak{a}_c$. We will study some properties of compatible homotopy $\mathcal{O}$-operators and their cohomological properties in a future project.

\medskip

\medskip

\section{Deformation Theory}\label{sec-def}
Formal deformation theory began with the seminal work of Gerstenhaber for associative algebras \cite{G64}. In this section, we first consider formal deformations of a compatible $\mathcal{O}$-operator $(T_1, T_2)$. In such deformations, we only deform the $\mathcal{O}$-operators $T_1, T_2$ that are compatible and keep the underlying algebra and bimodule intact. Such deformations are deformations are governed by the cohomology of the compatible $\mathcal{O}$-operator. Next, we also consider formal deformations of a compatible $\mathcal{O}$-operator algebra $(A, M, (T_1, T_2)).$ Here we allow deformations of all the structures. Such deformations are governed by the cohomology of the compatible $\mathcal{O}$-operator algebra $(A, M, (T_1, T_2)).$

\medskip

\medskip

\noindent {\bf 4.A. Deformations of compatible $\mathcal{O}$-operators.} Let $A$ be an associative algebra and $M$ be an $A$-bimodule. Consider the space $A[[t]]$ of formal power series in $t$ with coefficients from $A$. Then $A[[t]]$ is an associative algebra over the ring ${\bf k}[[t]]$. Moreover, the $A$-bimodule structure on $M$ can be extended to an $A[[t]]$-bimodule structure on $M[[t]]$.

\begin{defn}
Let $(T_1, T_2)$ be a compatible $\mathcal{O}$-operator on $M$ over the algebra $A$. A formal one-parameter deformation of $(T_1, T_2)$ consists of a pair $(T_{1,t}, T_{2,t})$ of formal sums
\begin{align*}
T_{1,t} = \sum_{i=0}^\infty t^i T_{1, i} ~ \text{ and } ~ T_{2,t} = \sum_{i=0}^\infty t^i T_{2, i}, ~ \text{ where } ~ T_{1,i}, T_{2,i} \in \mathrm{Hom}(M,A) \text{ with } T_{1,0} = T_1, T_{2,0} = T_2
\end{align*}
such that $(T_{1,t}, T_{2,t})$ is a compatible $\mathcal{O}$-operator on $M[[t]]$ over the algebra $A[[t]]$.
\end{defn}

It follows that $(T_{1,t}, T_{2,t})$ is a formal one-parameter deformation if and only if
\begin{align}
\sum_{i+j = n} T_{1,i} (u) \cdot T_{1,j} (v) =~& \sum_{i+j = n} T_{1,i} \big(   T_{1,j} (u) \cdot v + u \cdot T_{1,j} (v) \big), \label{formal1} \\
\sum_{i+j = n} T_{2,i} (u) \cdot T_{2,j} (v) =~& \sum_{i+j = n} T_{2,i} \big(   T_{2,j} (u) \cdot v + u \cdot T_{2,j} (v) \big), \\
\sum_{i+j = n} \big( T_{1,i} (u) \cdot T_{2,j} (v)  + T_{2,i} (u) \cdot T_{1,j} (v)\big) =~& \sum_{i+j = n} T_{1,i} \big(   T_{2,j} (u) \cdot v + u \cdot T_{2,j} (v) \big) + T_{2,i} \big(   T_{1,j} (u) \cdot v + u \cdot T_{1,j} (v) \big) \label{formal3}
\end{align}
holds, for $n = 0, 1, \ldots$. These system of equations are called deformation equations. Note that these equations are hold for $n=0$ as the pair $(T_1, T_2)$ is a compatible $\mathcal{O}$-operator. For $n =1$, it amounts that
\begin{align}
&T_{1}(u) \cdot T_{1,1} (v) + T_{1,1} (u) \cdot  T_{1}(v) = T_{1}\big( u \cdot T_{1,1} (v) + T_{1,1} (u) \cdot v \big)+ T_{1,1} \big(u \cdot T_{1}(v) + T_{1}(u) \cdot v \big), \label{com-def1}\\
&T_{2}(u) \cdot T_{2,1} (v) + T_{2,1} (u) \cdot  T_{2}(v) = T_{2}\big( u \cdot T_{2,1} (v) + T_{2,1} (u) \cdot v \big)+ T_{2,1} \big(u \cdot T_{2}(v) + T_{2}(u) \cdot v \big), \\
&T_{1}(u) \cdot T_{2,1} (v) + T_{2}(u) \cdot T_{1,1} (v) + T_{1,1} (u) \cdot  T_{2}(v)  + T_{2,1} (u) \cdot  T_{1}(v) = T_{1}\big( u \cdot T_{2,1} (v) + T_{2,1} (u) \cdot v \big) \label{com-def3}\\
& \qquad \qquad + T_{2}\big( u \cdot T_{1,1} (v) + T_{1,1} (u) \cdot v \big) + T_{1,1} \big(u \cdot T_{2}(v) + T_{2}(u) \cdot v \big) + T_{2,1} \big(u \cdot T_{1}(v) + T_{1}(u) \cdot v \big), \nonumber
\end{align}
for $u, v \in M$. The first identity is equivalent to $[T_1, T_{1,1}] = 0$, while the second identity is equivalent to $[T_2, T_{2,1}] = 0$. Finally, the third identity is equivalent to $[T_1, T_{2,1}]+ [T_2, T_{1,1}] = 0$. Therefore, from (\ref{com-def1})-(\ref{com-def3}), we get that
\begin{align*}
\delta_{(T_1, T_2)} \big( (T_{1,1},T_{2,1}) \big)= \big( [T_1, T_{1,1}], [T_1, T_{2,1}]+ [T_2, T_{1,1}], [T_2, T_{2,1}]  \big) = 0.
\end{align*}
In other words, $(T_{1,1}, T_{2,1})$ is a $1$-cocycle in the cochain complex $\{ C^\bullet_\mathrm{cO}(M,A), \delta_{(T_1,T_2)} \}$. This is called the `infinitesimal' of the deformation $(T_{1,t}, T_{2,t}).$

\begin{defn}
Let $(T_{1,t}, T_{2,t})$ and $(T'_{1,t}, T'_{2,t})$ be two formal one-parameter deformations of the compatible $\mathcal{O}$-operator $(T_1, T_2)$. They are said to be equivalent if there exists an element $a_0 \in A$, and maps $\phi_i \in \mathrm{Hom}(A,A)$, $\psi_i \in \mathrm{Hom}(M,M)$ for $i \geq 2$, such that the pair
\begin{align*}
\big( \phi_t := \mathrm{id}_A + t (\mathrm{ad}^l_{a_0} - \mathrm{ad}^r_{a_0}) + \sum_{i \geq 2} t^i \phi_i , ~ \psi_t := \mathrm{id}_M + t (l_{a_0} - r_{a_0}) + \sum_{i \geq 2} t^i \psi_i  \big)
\end{align*}
is a morphism of compatible $\mathcal{O}$-operators from $(T_{1,t}, T_{2,t})$ to $(T'_{1,t}, T'_{2,t})$.
\end{defn}

Therefore, the following identities must hold for equivalence of deformations
\begin{align}
&\phi_t (a \cdot b) = \phi_t (a) \cdot \phi_t (b),\\
&(\phi_t \circ T_{1,t})(u) = (T'_{1,t} \circ \psi_t)(u), ~~~~ (\phi_t \circ T_{2,t})(v) = (T'_{2,t} \circ \psi_t)(v), \label{def-eq2}\\
&\psi_t (a \cdot u) = \phi_t (a) \cdot \psi_t (u), ~~~~ \psi_t (u \cdot a) = \psi_t (u) \cdot \phi_t (a), ~ \text{ for } a, b \in A, u , v\in M.
\end{align}
By equating coefficients of $t$ from both sides of the two identities in (\ref{def-eq2}), we get
\begin{align*}
&\big( (T_{1,1}, T_{2,1}) - (T'_{1,1}, T'_{2,1})\big)(u,v)  \\
&= \big(  T_1 (u) \cdot a_0 - T_1 (u \cdot a_0) - a_0 \cdot T_1 (u) + T_1 (a \cdot u), ~  T_2 (v) \cdot a_0 - T_2 (v \cdot a_0) - a_0 \cdot T_2 (v) + T_2 (a \cdot v) \big) \\
&= \big( \delta_{(T_1,T_2)} (a_0) \big) (u, v).
\end{align*}
As a summary of the above discussions, we get the following.

\begin{thm}
Let $(T_1, T_2)$ be a compatible $\mathcal{O}$-operator on $M$ over the algebra $A$. Then the infinitesimal in any formal deformation of $(T_1,T_2)$ is a $1$-cocycle in the cohomology complex of $(T_1,T_2)$. Moreover, the corresponding cohomology class depends only on the equivalence class of the deformation.
\end{thm}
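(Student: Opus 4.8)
The plan is to extract the degree-one information from the deformation equations and from the equivalence relation, and then to recognize the resulting expressions as a cocycle and a coboundary, respectively, by comparing with the explicit description of the differential $\delta_{(T_1,T_2)}$.

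First I would prove that the infinitesimal is a $1$-cocycle. Collecting the coefficient of $t$ in the deformation equations (\ref{formal1})--(\ref{formal3}) yields exactly the three identities (\ref{com-def1})--(\ref{com-def3}). Using the formula for the bracket $[~,~]$ on $C^\bullet_\mathrm{O}(M,A)$, these three identities are equivalent to $[T_1, T_{1,1}] = 0$, $[T_2, T_{2,1}] = 0$ and $[T_1, T_{2,1}] + [T_2, T_{1,1}] = 0$. On the other hand, the explicit description of $\delta_{(T_1,T_2)}$ on $C^1_\mathrm{cO}(M,A)$ reads $\delta_{(T_1,T_2)}((f_1, f_2)) = ([T_1, f_1],\, [T_1, f_2] + [T_2, f_1],\, [T_2, f_2])$. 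Setting $(f_1, f_2) = (T_{1,1}, T_{2,1})$, the three conditions above are precisely the vanishing of the three components of $\delta_{(T_1,T_2)}((T_{1,1}, T_{2,1}))$, so $(T_{1,1}, T_{2,1})$ is a $1$-cocycle.

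Next I would treat the dependence on the equivalence class. Let $(T_{1,t}, T_{2,t})$ and $(T'_{1,t}, T'_{2,t})$ be equivalent via a morphism $(\phi_t, \psi_t)$ of the prescribed form, with linear terms $\mathrm{ad}^l_{a_0} - \mathrm{ad}^r_{a_0}$ on $A$ and $l_{a_0} - r_{a_0}$ on $M$. Expanding the intertwining relations $\phi_t \circ T_{i,t} = T'_{i,t} \circ \psi_t$ for $i = 1, 2$ and reading off the coefficient of $t$ (using $T_{i,0} = T'_{i,0} = T_i$) gives $T_{i,1} - T'_{i,1} = [T_i, a_0]$, where $a_0 \in A = C^0_\mathrm{cO}(M,A)$ and the bracket is computed by the formula for $[P,a]$ with $P = T_i$. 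Since $([T_1, a_0], [T_2, a_0]) = \delta_{(T_1,T_2)}(a_0)$, we conclude $(T_{1,1}, T_{2,1}) - (T'_{1,1}, T'_{2,1}) = \delta_{(T_1,T_2)}(a_0)$ is a coboundary, so the two infinitesimals represent the same class in $H^1_{(T_1,T_2)}(M,A)$.

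The computation itself is routine; the only care needed is bookkeeping. One must correctly match the three slots of $\delta_{(T_1,T_2)}$ in $C^2_\mathrm{cO}(M,A)$ against the mixed deformation equation (\ref{com-def3}), whose left-right structure couples $T_1$ and $T_2$. The one genuine point worth verifying is that the specific linear terms chosen in $\phi_t$ and $\psi_t$ reproduce exactly the coboundary $\delta_{(T_1,T_2)}(a_0) = ([T_1, a_0], [T_2, a_0])$; this follows from the formula for $[P, a]$ evaluated at $P = T_i$ and $a = a_0$, but should be checked explicitly.
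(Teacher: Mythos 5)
Your proposal is correct and follows essentially the same route as the paper: extracting the coefficient of $t$ from the deformation equations to obtain $[T_1,T_{1,1}]=0$, $[T_2,T_{2,1}]=0$, $[T_1,T_{2,1}]+[T_2,T_{1,1}]=0$, matching these with the explicit formula for $\delta_{(T_1,T_2)}$ on $C^1_\mathrm{cO}(M,A)$, and then expanding $\phi_t\circ T_{i,t}=T'_{i,t}\circ\psi_t$ at first order to identify the difference of infinitesimals with the coboundary $\delta_{(T_1,T_2)}(a_0)=([T_1,a_0],[T_2,a_0])$. The bookkeeping points you flag (the placement of the mixed term in the second slot and the verification that the chosen linear terms of $\phi_t,\psi_t$ yield exactly $[T_i,a_0]$) are precisely the checks the paper carries out.
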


\medskip

\noindent {\bf Rigidity of compatible $\mathcal{O}$-operators.} In the next, we consider rigidity of compatible $\mathcal{O}$-operators following the classical concept of rigidity of associative structures \cite{G64}.

\begin{defn}
A compatible $\mathcal{O}$-operator $(T_1, T_2)$ is said to be rigid if any formal one-parameter deformation $(T_{1,t}, T_{2,t})$ of the compatible $\mathcal{O}$-operator $(T_1,T_2)$ is equivalent to the undeformed one.
\end{defn}

Recall that Nijenhuis elements associated with an $\mathcal{O}$-operator was defined in \cite{D20}. The author also finds a sufficient condition for the rigidity of an $\mathcal{O}$-operator in terms of Nijenhuis elements. Our aim here is to generalize these in the compatible set-up.

\begin{defn}
Let $(T_1, T_2)$ be a compatible $\mathcal{O}$-operator on $M$ over the algebra $A$. An element $a_0 \in A$ is said to be a Nijenhuis element associated with $(T_1, T_2)$ if the element $a_0$ is a Nijenhuis element for both the $\mathcal{O}$-operators $T_1, T_2$, i.e., the followings are hold
\begin{align*}
(a_0 \cdot a - a \cdot a_0) ~\cdot~ &(a_0 \cdot b - b \cdot a_0) = 0,\\
a_0 \cdot \big( l_{T_1} (u , a_0) - r_{T_1} (a_0, u)  \big) ~-~ & \big( l_{T_1} (u , a_0) - r_{T_1} (a_0, u)  \big) \cdot a_0 = 0,\\
a_0 \cdot \big( l_{T_2} (u , a_0) - r_{T_2} (a_0, u)  \big) ~-~ & \big( l_{T_2} (u , a_0) - r_{T_2} (a_0, u)  \big) \cdot a_0 = 0,\\
(a_0 \cdot a - a \cdot a_0) ~\cdot~ & (a \cdot u - u \cdot a) = 0,  \\
(a \cdot u - u \cdot a) ~\cdot~ &(a_0 \cdot a - a \cdot a_0) = 0,
\end{align*}
for $a, b \in A$ and $u \in M$. We denote the set of all Nijenhuis elements by $\mathrm{Nij}(T_1, T_2)$.
\end{defn}

\begin{remark}
It is easy to observe that, if $(T_1, T_2)$ is a compatible $\mathcal{O}$-operator and $a_0 \in \mathrm{Nij}(T_1, T_2)$ then $a_0 \in \mathrm{Nij}(T_1 + T_2)$, where $T_1 + T_2$ is the total $\mathcal{O}$-operator.
\end{remark}

The following result is a generalization of \cite[Proposition 4.16]{D20} in the compatible set-up. Hence we omit the proof.

\begin{prop}
Let $(T_1, T_2)$ be a compatible $\mathcal{O}$-operator on $M$ over the algebra $A$. If any $1$-cocycle in the cohomology complex $\{ C^\bullet_\mathrm{cO} (M,A), \delta_{(T_1,T_2)} \}$ is given by $\delta_{(T_1, T_2)} (a_0)$, for some $a_0 \in \mathrm{Nij}(T_1, T_2)$, then the compatible $\mathcal{O}$-operator $(T_1, T_2)$ is rigid.
\end{prop}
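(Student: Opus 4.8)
The plan is to adapt the classical Gerstenhaber--Nijenhuis argument for rigidity to the compatible setting, trivializing an arbitrary deformation one order at a time by gauge transformations built directly out of Nijenhuis elements. Let $(T_{1,t}, T_{2,t})$ be any formal one-parameter deformation of $(T_1, T_2)$. As shown above, its infinitesimal $(T_{1,1}, T_{2,1})$ is a $1$-cocycle in $\{ C^\bullet_\mathrm{cO}(M,A), \delta_{(T_1,T_2)} \}$, so the hypothesis furnishes an element $a_0 \in \mathrm{Nij}(T_1, T_2)$ with $(T_{1,1}, T_{2,1}) = \delta_{(T_1,T_2)}(a_0) = ([T_1, a_0], [T_2, a_0])$.

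From $a_0$ I would form the candidate equivalence
\[
\phi_t = \mathrm{id}_A + t(\mathrm{ad}^l_{a_0} - \mathrm{ad}^r_{a_0}), \qquad \psi_t = \mathrm{id}_M + t(l_{a_0} - r_{a_0}),
\]
and first check that it is a strict self-morphism of the pair $(A[[t]], M[[t]])$. The defining identities of $\mathrm{Nij}(T_1, T_2)$ are calibrated exactly for this: the condition $(a_0 \cdot a - a \cdot a_0)\cdot(a_0 \cdot b - b \cdot a_0) = 0$ forces the quadratic error in $\phi_t(a \cdot b) - \phi_t(a)\cdot\phi_t(b)$ to vanish, so $\phi_t$ is an algebra homomorphism of $A[[t]]$ and, its leading term being $\mathrm{id}_A$, an automorphism; the remaining module-type identities guarantee the compatibilities $\psi_t(a \cdot u) = \phi_t(a) \cdot \psi_t(u)$ and $\psi_t(u \cdot a) = \psi_t(u) \cdot \phi_t(a)$, while the two identities carrying $l_{T_i}, r_{T_i}$ force the image of the infinitesimal to commute with $a_0$, which is what allows the successive gauge maps to compose without generating new lower-order obstructions.

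Next I would transport the deformation along this equivalence, passing to the equivalent family $\overline{T}_{i,t} = \phi_t \circ T_{i,t} \circ \psi_t^{-1}$. An order-by-order expansion gives $\overline{T}_{i,0} = T_i$ and, at first order,
\[
\overline{T}_{i,1}(u) = T_{i,1}(u) + [a_0, T_i(u)] - T_i(a_0 \cdot u - u \cdot a_0) = T_{i,1}(u) - [T_i, a_0](u) = 0,
\]
where I have used $[T_i, a_0](u) = l_{T_i}(u, a_0) - r_{T_i}(a_0, u)$ together with $T_{i,1} = [T_i, a_0]$. Thus $(\overline{T}_{1,t}, \overline{T}_{2,t})$ is a deformation whose lowest nontrivial term lies in degree $\geq 2$, and it is equivalent to $(T_{1,t}, T_{2,t})$. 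Now I would iterate: the lowest surviving coefficient of the new deformation is again a $1$-cocycle (the order-$k$ analogue of the infinitesimal computation recalled before the theorem), the hypothesis produces a Nijenhuis element $a_0^{(k)}$, and the rescaled gauge maps $\mathrm{id}_A + t^k(\mathrm{ad}^l_{a_0^{(k)}} - \mathrm{ad}^r_{a_0^{(k)}})$ and $\mathrm{id}_M + t^k(l_{a_0^{(k)}} - r_{a_0^{(k)}})$ push the lowest order strictly upward while leaving the already-cancelled lower terms untouched. Composing these equivalences yields an equivalence to the undeformed operator, so $(T_1, T_2)$ is rigid.

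The step I expect to be the main obstacle is the strictness claim of the second paragraph: one must check that the five defining identities of $\mathrm{Nij}(T_1, T_2)$ cancel \emph{all} the quadratic-in-$t$ obstructions to $(\phi_t, \psi_t)$ being a morphism---the algebra case is the clean one, but the module-compatibility identities require matching the cross terms between the $A$- and $M$-corrections against the Nijenhuis relations, and the identities involving $l_{T_i}, r_{T_i}$ must be shown to control the interaction of $a_0$ with the transported operator at each inductive stage. Once this is in place, the remaining work is formal: confirming that the degree-$k$ gauge transformation does not disturb the lower orders, and that the resulting infinite composite of equivalences converges in the $t$-adic (weakly filtered) sense to a genuine equivalence with the trivial deformation.
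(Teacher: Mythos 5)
Your proposal is correct and follows exactly the route the paper intends: the paper omits the proof, stating only that it generalizes Proposition~4.16 of \cite{D20}, and that proof is precisely your Gerstenhaber-style induction --- gauge away the lowest-order term by the pair $(\phi_t,\psi_t)$ built from a Nijenhuis element (the Nijenhuis identities killing the quadratic obstructions to $(\phi_t,\psi_t)$ being a morphism), then iterate with maps of the form $\mathrm{id}+t^k(\cdot)$ and take the $t$-adic limit of the composite equivalences. Your first-order computation $\overline{T}_{i,1}=T_{i,1}-[T_i,a_0]=0$ matches the paper's conventions, so nothing further is needed.
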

\medskip

\noindent {\bf Finite order deformations.} Let $A$ be an associative algebra and $M$ be an $A$-bimodule. Let $(T_1, T_2)$ be a compatible $\mathcal{O}$-operator on $M$ over the algebra $A$.

\begin{defn}
An order $N$ deformation ($N \in \mathbb{N}$) of the compatible $\mathcal{O}$-operator $(T_1, T_2)$ consists of a pair $(T_{1,t}^N, T_{2,t}^N)$ of two degree $N$ polynomials of the form
\begin{align*}
T_{1,t}^N = \sum_{i=0}^N t^i T_{1, i} ~~~ \text{ and } ~~~ T_{2,t}^N = \sum_{i=0}^N t^i T_{2, i}, ~ \text{ where } ~ T_{1,i}, T_{2,i} \in \mathrm{Hom}(M,A) \text{ with } T_{1,0} = T_1,~ T_{2,0} = T_2
\end{align*}
that satisfies the identities (\ref{formal1})-(\ref{formal3}), for $n =0, 1, \ldots, N$.
\end{defn}

The system of identities can be equivalently expressed as
\begin{align*}
[T_1, T_{1, n}] = - &\frac{1}{2} \sum_{\substack{i+j = n \\ i, j \geq 1}} [T_{1,i}, T_{1,j}], \qquad
[T_2, T_{2, n}] = - \frac{1}{2} \sum_{\substack{i+j = n \\ i, j \geq 1}} [T_{2,i}, T_{2,j}], \\
[T_1, T_{2,n}]&+ [T_2,T_{1,n}] = - \frac{1}{2} \sum_{ \substack{ i+j = n \\ i, j \geq 1}} \big( [T_{1,i}, T_{2,j}] + [T_{2,i}, T_{1,j} ]   \big),
\end{align*}
for $n = 0, 1, \ldots , n$. We may combine these identities and write as
\begin{align}\label{half}
\delta_{(T_1, T_2)} (T_{1,n}, T_{2,n}) = - \frac{1}{2} \sum_{\substack{i+j = n \\ i, j \geq 1}} \llbracket (T_{1, i}, T_{2,i}), (T_{1, j}, T_{2,j}) \rrbracket, ~ \text{ for } n = 0, 1, \ldots, N.
\end{align}
Motivated by the right-hand side expression of (\ref{half}), we define an element
\begin{align*}
\mathrm{Ob}_{ (T_{1,t}^N, T_{2,t}^N)} := - \frac{1}{2} \sum_{\substack{i+j = N + 1 \\ i, j \geq 1}} \llbracket (T_{1, i}, T_{2,i}), (T_{1, j}, T_{2,j}) \rrbracket.
\end{align*}
Note that this is a $2$-cochain in the cochain complex $\{ C^\bullet_\mathrm{cO} (M,A), \delta_{(T_1, T_2)} \}$ defining the cohomology of the compatible $\mathcal{O}$-operator $(T_1, T_2)$. Using the graded Jacobi identity of the bracket $\llbracket ~, ~ \rrbracket$ and the identities (\ref{half}), it can be shown that $\delta_{(T_1, T_2)} \big( \mathrm{Ob}_{ (T_{1,t}^N, T_{2,t}^N)}  \big) = 0.$ See \cite{D20} for a similar calculation. In other words, $\mathrm{Ob}_{ (T_{1,t}^N, T_{2,t}^N)}$ is a $2$-cocycle. The corresponding cohomology class $[\mathrm{Ob}_{ (T_{1,t}^N, T_{2,t}^N)}] \in {H^2_{(T_1, T_2)} (M,A)}$ is called the `obstruction class'.

\begin{defn}
An order $N$ deformation $(T_{1,t}^N, T_{2,t}^N)$ of the compatible $\mathcal{O}$-operator $(T_1, T_2)$ is said to be extensible if there are maps $T_{1, N+1}, T_{2, N+1} : M \rightarrow A$ which makes the pair
\begin{align}\label{ext-def}
\big(  T_{1, t}^{N+1} = T_{1, t}^N + t^{N+1} T_{1, N+1}, ~ T_{2, t}^{N+1} = T_{2, t}^N + t^{N+1} T_{2, N+1} \big)
\end{align}
into an order $N+1$ deformation.
\end{defn}

\begin{thm}
An order $N$ deformation $(T_{1,t}^N, T_{2,t}^N)$ of the compatible $\mathcal{O}$-operator $(T_1, T_2)$ is extensible if and only if the corresponding obstruction class $[\mathrm{Ob}_{ (T_{1,t}^N, T_{2,t}^N)}]$ vanishes.
\end{thm}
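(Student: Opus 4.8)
The plan is to unwind the definition of extensibility directly into a cohomological condition on the obstruction cochain. By definition, the extended pair (\ref{ext-def}) is an order $N+1$ deformation precisely when the deformation equations (\ref{formal1})--(\ref{formal3}) hold for all $n = 0, 1, \ldots, N+1$. First I would observe that adding the top-degree terms $t^{N+1} T_{1,N+1}$ and $t^{N+1} T_{2,N+1}$ does not alter any equation of order $n \le N$: in each such equation the summation index runs over $i + j = n \le N$, so the new coefficients $T_{1,N+1}, T_{2,N+1}$ never appear. Consequently, since $(T_{1,t}^N, T_{2,t}^N)$ is already an order $N$ deformation, the only genuinely new constraint is the order $N+1$ equation.

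Next I would rewrite this order $N+1$ equation in the combined form (\ref{half}). Separating the terms in which one index is zero (these assemble into $\delta_{(T_1,T_2)}(T_{1,N+1}, T_{2,N+1})$ via $T_{1,0}=T_1$, $T_{2,0}=T_2$ and the graded symmetry of $\llbracket~,~\rrbracket$) from those with $i, j \ge 1$, the order $N+1$ equation reads exactly
\begin{align*}
\delta_{(T_1,T_2)}(T_{1,N+1}, T_{2,N+1}) = -\frac{1}{2} \sum_{\substack{i+j=N+1\\ i,j \ge 1}} \llbracket (T_{1,i}, T_{2,i}), (T_{1,j}, T_{2,j}) \rrbracket = \mathrm{Ob}_{ (T_{1,t}^N, T_{2,t}^N)}.
\end{align*}
Note that the right-hand side involves only $T_{1,1}, \ldots, T_{1,N}$ and $T_{2,1}, \ldots, T_{2,N}$, so it depends solely on the given order $N$ deformation, consistent with the definition of the obstruction.

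Then I would conclude by a direct cohomological reading. The order $N$ deformation is extensible if and only if there exists a $1$-cochain $(T_{1,N+1}, T_{2,N+1}) \in C^1_\mathrm{cO}(M,A)$ solving the displayed equation, i.e. if and only if $\mathrm{Ob}_{ (T_{1,t}^N, T_{2,t}^N)}$ lies in the image of $\delta_{(T_1,T_2)} : C^1_\mathrm{cO}(M,A) \to C^2_\mathrm{cO}(M,A)$, that is, is a $2$-coboundary. Since $\mathrm{Ob}_{ (T_{1,t}^N, T_{2,t}^N)}$ has already been shown to be a $2$-cocycle, being a coboundary is equivalent to the vanishing of its cohomology class $[\mathrm{Ob}_{ (T_{1,t}^N, T_{2,t}^N)}] \in H^2_{(T_1,T_2)}(M,A)$. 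This proves both implications at once.

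I do not expect any single step to be a serious obstacle: once the Maurer-Cartan packaging of the deformation equations is in place, the argument is essentially formal. The only point requiring care is the bookkeeping in the second step---checking that the zero-index terms of the order $N+1$ equation reassemble precisely into $\delta_{(T_1,T_2)}(T_{1,N+1}, T_{2,N+1})$ with the correct signs and with the factor $\tfrac12$ matching the symmetric sum---which is exactly the computation already carried out in deriving (\ref{half}).
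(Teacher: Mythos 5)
Your proposal is correct and follows essentially the same route as the paper: both reduce extensibility to the solvability of the single order $N+1$ equation $\delta_{(T_1,T_2)}\big((T_{1,N+1},T_{2,N+1})\big)=\mathrm{Ob}_{(T_{1,t}^N,T_{2,t}^N)}$, and then use the previously established fact that the obstruction is a $2$-cocycle to identify solvability with the vanishing of its class in $H^2_{(T_1,T_2)}(M,A)$. Your write-up merely spells out the bookkeeping (lower-order equations unaffected, zero-index terms assembling into the coboundary) that the paper leaves implicit.
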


\begin{proof}
Suppose $(T_{1,t}^N, T_{2,t}^N)$ is extensible. Then from the definition of $\mathrm{Ob}_{ (T_{1,t}^N, T_{2,t}^N)}$, we have $\mathrm{Ob}_{ (T_{1,t}^N, T_{2,t}^N)} = \delta_{(T_1,T_2)} ((T_{1, N+1}, T_{2, N+1}))$ a coboundary. Hence the corresponding cohomology class vanishes.

Conversely, let $(T_{1,t}^N, T_{2,t}^N)$ be a deformation of order $N$ for which the obstruction class $[\mathrm{Ob}_{ (T_{1,t}^N, T_{2,t}^N)}]$ vanishes. Then we have $ \mathrm{Ob}_{ (T_{1,t}^N, T_{2,t}^N)} = \delta_{(T_1,T_2)} ((T_{1, N+1}, T_{2, N+1}))$, for some $(T_{1, N+1}, T_{2, N+1}) \in C^1_\mathrm{cO} (M,A)$. This implies that the pair $(T^{N+1}_{1,t}, T^{N+1}_{2,t})$ given by (\ref{ext-def}) is a deformation of order $N+1$. In other words, $(T_{1,t}^N, T_{2,t}^N)$ is extensible.
\end{proof}

\medskip

\medskip

\noindent {\bf 4.B. Deformations of compatible $\mathcal{O}$-operator algebras.}
Here we will consider formal deformations of compatible $\mathcal{O}$-operator algebras in which we allow to deform all the underlying structures. 

\begin{defn}
Let $(A = (A, \mu), M = (M, l, r), (T_1, T_2))$ be a compatible $\mathcal{O}$-operator algebra. A 
formal deformation of this compatible $\mathcal{O}$-operator algebra consist of formal sums
\begin{align*}
     \mu_t= \sum_{i=0}^\infty t^i\mu_i ,~~~~   l_t=\sum_{i=0}^\infty t^i l_i ,~~~~ r_t=\sum_{i=0}^\infty t^i r_i,~~~~T_{1,t} = \sum_{i=0}^\infty t^i T_{1, i} ~~ \text{ and } ~~ T_{2,t} = \sum_{i=0}^\infty t^i T_{2, i},
\end{align*}
where $\mu_i\in \mathrm{Hom}(A^{\otimes 2},A)$, $l_i\in \mathrm{Hom}(A\otimes M,A)$, $r_i\in \mathrm{Hom}(M\otimes A,M)$, $T_{1,i}, T_{2,i} \in \mathrm{Hom}(M,A)$ with  $\mu_0=\mu, l_0=l, r_0=r, T_{1,0} = T_1$ and $T_{2,0} = T_2$, such that the triple 
\begin{align*}
\big( A[[t]]= (A[[t]],\mu_t), M[[t]] = (M[[t]],l_t,r_t),(T_{1,t},T_{2,t}) \big)
\end{align*}
is a compatible $\mathcal{O}$-operator algebra. We denote a formal deformation as above by the tuple $(\mu_t, l_t, r_t, T_{1, t}, T_{2,t}).$
\end{defn}

It follows that $( \mu_t, l_t, r_t, T_{1, t}, T_{2,t})$ is a formal deformation of the compatible $\mathcal{O}$-operator algebra if the following set of equations are hold:
\begin{align}
\sum_{i+j= n} \mu_i(\mu_j(a,b),c)=~&\sum_{i+j= n}\mu_i(a,\mu_j(b,c)),\label{formal1} \\
	\sum_{i+j= n} l_i(\mu_j(a,b),u)=~&\sum_{i+j= n}l_i(a,l_j(b,u)),\label{formal2} \\
	\sum_{i+j= n} r_i(l_j(a,u),b)=~&\sum_{i+j= n}l_i(a,r_j(u,b)),\label{formal2p} \\
	\sum_{i+j= n} r_i(u,\mu_j(a,b))=~&\sum_{i+j= n}r_i(r_j(u,a),b),\label{formal3} \\
\sum_{i+j+k = n} \mu_i(T_{1,j} (u)  ,  T_{1,k} (v) ) =~& \sum_{i+j+k  = n} T_{1,i} \big(   l_j(T_{1,k} (u) , v) + r_j(u, T_{1,k} (v) ) \big), \label{formal4} \\
\sum_{i+j+k = n} \mu_i(T_{2,j} (u)  ,  T_{2,k} (v) ) =~& \sum_{i+j+k  = n} T_{2,i} \big(   l_j(T_{2,k} (u) , v) + r_j(u, T_{2,k} (v) ) \big), \label{formal5} \\
\sum_{i+j+k = n} \mu_i(T_{1,j} (u)  ,  T_{2,k} (v) ) + \sum_{i+j+k = n} \mu_i(T_{2,j} (u)  ,  T_{1,k} (v) ) =~& \sum_{i+j+k  = n} T_{1,i} \big(   l_j(T_{2,k} (u) , v) + r_j(u, T_{2,k} (v) ) \big) \label{formal6}  \\
&+ \sum_{i+j+k  = n} T_{2,i} \big(   l_j(T_{1,k} (u) , v) + r_j(u, T_{1,k} (v) ) \big), \nonumber
\end{align}
for $a, b, c \in A$, $u , v \in M$ and $n = 0, 1, \ldots$ . These equations are obviously hold for $n=0$ as the triple $(A = (A,\mu),M=(M,l,r),(T_1,T_2))$ is a compatible $\mathcal{O}$-operator algebra. However, for $n =1$, we get
\begin{small}
\begin{align}
\mu_1(a,b) \cdot c+\mu_1(a \cdot b, c)=~& a \cdot \mu_1(b,c)+\mu_1(a,b \cdot c),\label{com-def1}\\
	 \mu_1(a, b) \cdot u +l_1(a \cdot b, u)=~& a \cdot l_1(b, u)+l_1(a,b \cdot u),\label{com-def2}\\
	 r_1(a \cdot u, b) + l_1(a , u) \cdot b =~& a \cdot r_1( u, b)+l_1(a,u \cdot b),\label{com-def2p}\\
	 u \cdot \mu_1(a,b) + r_1(u, a \cdot b)=~& r_1(u,a) \cdot b +r_1(u \cdot a,b),\label{com-def3}
\end{align}	
\begin{align}
\mu_1(T_{1} (u), T_{1} (v))+T_{1} (u) \cdot T_{1,1}(v) + T_{1,1}(u) \cdot T_{1}(v) =~& T_{1} \big(  T_{1,1}(u) \cdot v + u \cdot T_{1,1}(v) + l_1(T_{1}(u), v) +r_1(u, T_{1}(v))  \big) \label{com-def4}\\
& \qquad \qquad + T_{1,1} \big( T_{1}(u) \cdot v  + u \cdot T_{1} (v) \big), \nonumber \\
\mu_1(T_{2} (u), T_{2} (v))+T_{2} (u) \cdot T_{2,1}(v) + T_{2,1}(u) \cdot T_{2}(v) =~& T_{2} \big(  T_{2,1}(u) \cdot v + u \cdot T_{2,1}(v) + l_1(T_{2}(u), v) +r_1(u, T_{2}(v))  \big) \label{com-def5} \\
& \qquad \qquad + T_{2,1} \big( T_{2}(u) \cdot v  + u \cdot T_{2} (v) \big), \nonumber
\end{align}
\begin{align}
\mu_1 &(T_{1} (u), T_{2} (v))+T_{1} (u) \cdot T_{2,1}(v) + T_{1,1}(u) \cdot T_{2}(v)   +  \mu_1(T_{2} (u), T_{1} (v))+T_{2} (u) \cdot T_{1,1}(v) + T_{2,1}(u) \cdot T_{1}(v) \label{com-def6}  \\
&= T_{1} \big(  T_{2,1}(u) \cdot v + u \cdot T_{2,1}(v) + l_1(T_{2}(u), v) +r_1(u, T_{2}(v))  \big)
+ T_{1,1} \big( T_{2}(u) \cdot v  + u \cdot T_{2} (v) \big) \nonumber \\
& \quad + T_{2} \big(  T_{1,1}(u) \cdot v + u \cdot T_{1,1}(v) + l_1(T_{1}(u), v) +r_1(u, T_{1}(v))  \big) 
+ T_{2,1} \big( T_{1}(u) \cdot v  + u \cdot T_{1} (v) \big). \nonumber 
\end{align}
\end{small}
Combining all these relations, we get that
\begin{align*}
\delta_\mathrm{cOA} (\mu_1 + l_1 + r_1 , (T_{1,1}, T_{2,1})) = 0.
\end{align*}
In other words, $( (\mu_1+l_1+r_1),(T_{1,1},T_{2,1}) )$ is a $2$-cocycle in the cohomology complex of the compatible $\mathcal{O}$-operator algebra $(A, M, (T_1, T_2))$. This is called the infinitesimal of the deformation $(\mu_t,l_t,r_t,T_{1,t},T_{2,t})$.

\begin{defn}
Let $(\mu_t, l_t,r_t, T_{1,t},T_{2,t})$ and $(\mu'_t ,l'_t,r'_t,T'_{1,t},T'_{2,t})$ be two deformations of the compatible $\mathcal{O}$-operator algebra $(A= (A,\mu), M=(M,l,r),(T_1,T_2))$. These two deformations are said to be equivalent if there exist formal maps $\phi_t = \sum_{i=0}^\infty t^i \phi_i$ and $\psi_t = \sum_{i=0}^\infty t^i \psi_i$, where $\phi_i \in \mathrm{Hom}(A,A)$, $\psi_i \in \mathrm{Hom}(M,M)$ with $\phi_0 = \mathrm{id}_A$ and $\psi_0 = \mathrm{id}_M$, such that the pair $(\phi_t, \psi_t)$ is a morphism of compatible $\mathcal{O}$-operator algebras from $\big( (A[[t]],\mu_t),  (M[[t]],l_t,r_t),(T_{1,t},T_{2,t}) \big)$ to $\big( (A[[t]],\mu'_t), (M[[t]],l'_t,r'_t),(T'_{1,t},T'_{2,t}) \big)$.
\end{defn}

Thus, the equivalence of $(\mu_t, l_t,r_t, T_{1,t},T_{2,t})$ and $(\mu'_t ,l'_t,r'_t,T'_{1,t},T'_{2,t})$ amounts the following system of equations
\begin{align*}
\sum_{i+j = n} \phi_i (\mu_j (a, b) ) =~& \sum_{i+j+k = n} \mu_i' \big(  \phi_j (a) , \phi_k (b) \big), \\
\sum_{i+j = n} \psi_i (l_j (a, u) ) =~& \sum_{i+j+k = n} l_i' \big(  \phi_j (a) , \psi_k (u) \big), \\
\sum_{i+j = n} \psi_i (r_j (u,a) ) =~& \sum_{i+j+k = n} r_i' \big(  \psi_j (u) , \phi_k (a) \big),\\
\sum_{i+j = n} \phi_i \circ T_{1, j} =~& \sum_{i+j=n} T'_{1, i} \circ \psi_j \\
\sum_{i+j = n} \phi_i \circ T_{2, j} =~& \sum_{i+j=n} T'_{2, i} \circ \psi_j,
\end{align*}
for $a, b \in A$, $u \in M$ and $n =0, 1, \ldots$ . These are obviously hold for $n=0$. However, for $n=1$, we get
\begin{align*}
\mu_1 (a, b) - \mu_1' (a, b) =~& a \cdot \phi_1 (b) - \phi_1 (a \cdot b) + \phi_1 (a) \cdot b,\\
l_1 (a, u) - l_1' (a, u) =~& a \cdot \psi_1 (u) - \psi_1 (a \cdot u) + \phi_1 (a) \cdot u,\\
r_1 (u, a) - r_1' (u, a) =~& u \cdot \phi_1 (a) - \psi_1 (u \cdot a) + \psi_1 (u) \cdot a,\\
T_{1,1} - T'_{1,1} =~& T_1' \circ \psi_1 - \phi_1 \circ T_{1}, \\
T_{2,1} - T'_{2,1} =~& T_2' \circ \psi_1 - \phi_1 \circ T_{2}.
\end{align*}
Combining these relations, we get that
\begin{align*}
\big(  \mu_1 + l_1 + r_1 , (T_{1,1}, T_{2,1}) \big) - \big(  \mu'_1 + l'_1 + r'_1 , (T'_{1,1}, T'_{2,1}) \big) = \delta_\mathrm{cOA} \big( (\phi_1, \psi_1) \big).
\end{align*}
As a consequence, we get the following.

\begin{thm}
Let $(A,,M,(T_1,T_2))$ be a compatible $\mathcal{O}$-operator algebra. Then the infinitesimal in a formal deformation of this compatible $\mathcal{O}$-operator algebra is a $2$-cocycle in the cohomology complex of $(A,M,(T_1,T_2))$. Moreover, the corresponding cohomology class depends only on the equivalence class of the formal deformation.
\end{thm}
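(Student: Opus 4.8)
The plan is to establish both assertions by isolating the coefficient of $t$ in the appropriate formal power series and then recognizing the resulting linear relations as the components of the coboundary operator $\delta_\mathrm{cOA}$. Recall that, up to the overall sign $(-1)^{n-2}$, this operator is the differential $l_1^{\alpha}$ induced by the Maurer-Cartan element $\alpha = (\pi[1],(T_1,T_2))$ in the $L_\infty$-algebra $(V'[1]\oplus\mathfrak{a}_c, \{\widetilde{l}_k\}_{k=1}^\infty)$. Thus the whole argument reduces to matching the explicit order-one deformation (resp. equivalence) identities against this differential, a computation entirely parallel to the non-compatible $\mathcal{O}$-operator algebra case of \cite{DasMishra}.

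For the cocycle claim, I would expand the six defining relations of a formal deformation as power series in $t$ and read off the coefficient of $t^1$. The contributions coming from $\mu_t, l_t, r_t$ assemble into a single element of $V'_2 = \mathrm{Hom}(A^{\otimes 3}, A) \oplus \mathrm{Hom}(\mathcal{A}^{2,1}, M)$, expressing that $\mu_1+l_1+r_1$ is closed for the Hochschild-type differential attached to $\pi$; the contributions involving $T_{1,t}$ and $T_{2,t}$ assemble into three elements of $\mathfrak{a}_1 = \mathrm{Hom}(M^{\otimes 2}, A)$, one quadratic in $T_1$, one mixed in $T_1,T_2$, and one quadratic in $T_2$. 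Since $\delta_\mathrm{cOA}$ carries the infinitesimal $(\mu_1+l_1+r_1,(T_{1,1},T_{2,1})) \in C^2_\mathrm{cOA}$ into $C^3_\mathrm{cOA}(A,M,(T_1,T_2)) = (V'[1]\oplus\mathfrak{a}_c)_1 = V'_2 \oplus (\mathfrak{a}_1)^{\oplus 3}$, it has exactly these four components; the key step is to check that each collected order-one relation is precisely the vanishing of the corresponding component, so that the infinitesimal is a $2$-cocycle.

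For the second assertion, suppose two formal deformations $(\mu_t,l_t,r_t,T_{1,t},T_{2,t})$ and $(\mu'_t,l'_t,r'_t,T'_{1,t},T'_{2,t})$ are equivalent via a morphism $(\phi_t,\psi_t)$ with $\phi_0=\mathrm{id}_A$ and $\psi_0=\mathrm{id}_M$. I would again extract the coefficient of $t$ from the five compatibility identities defining such a morphism. Assembling the resulting relations should show that the difference of the two infinitesimals equals $\delta_\mathrm{cOA}((\phi_1,\psi_1))$, where $(\phi_1,\psi_1)\in C^1_\mathrm{cOA}(A,M,(T_1,T_2)) = (V'[1])_{-1} = \mathrm{Hom}(A,A)\oplus\mathrm{Hom}(M,M)$. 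Hence the two $2$-cocycles are cohomologous, and their common cohomology class depends only on the equivalence class of the deformation.

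The main obstacle is the bookkeeping in writing $\delta_\mathrm{cOA}$ explicitly. Because $l_1^{\alpha}$ is assembled from the brackets $\widetilde{l}_k$ with the Maurer-Cartan element inserted $k-1$ times, I would first determine which $\widetilde{l}_k$ actually contribute in the relevant degree and track all the Koszul signs, together with the $(-1)^{n-2}$ appearing in the definition of $\delta_\mathrm{cOA}$. Once this explicit form is pinned down, the identification with the order-one deformation and equivalence equations is a direct component-by-component comparison.
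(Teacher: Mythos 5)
Your proposal is correct and follows essentially the same route as the paper: the paper likewise extracts the coefficient of $t$ from the six deformation equations to get $\delta_\mathrm{cOA}\big(\mu_1+l_1+r_1,(T_{1,1},T_{2,1})\big)=0$, and from the five morphism identities to get that the difference of infinitesimals equals $\delta_\mathrm{cOA}\big((\phi_1,\psi_1)\big)$. Your identification of the cochain groups $C^1_\mathrm{cOA}$, $C^2_\mathrm{cOA}$, $C^3_\mathrm{cOA}$ and of the four components of the differential matches the paper's computation exactly.
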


\medskip

\section{Compatible dendriform algebras}\label{sec-comp-dend}
In this section, we first consider compatible dendriform algebras introduced in \cite{LSB19} and find their relations with compatible $\mathcal{O}$-operators. Then we introduce cohomology of compatible dendriform algebras and relate with the cohomology of compatible associative algebras and compatible $\mathcal{O}$-operators.

\begin{defn} \cite{L01} A dendriform algebra is a triple $(D, \prec, \succ)$ consisting of a vector space $D$ together with bilinear operations $\prec, \succ : D \otimes D \rightarrow D$ satisfying for $x, y, z \in D$,
\begin{align}
(x\prec y)\prec z=~& x\prec(y\prec z+y\succ z), \label{dend-id1}\\
 (x\succ y)\prec z=~& x\succ (y\prec z),\\
(x\prec y+x\succ y)\succ z=~& x\succ(y\succ z). \label{dend-id3}
\end{align}
\end{defn}

\begin{remark}
(i) Let $(D, \prec, \succ)$ be a dendriform algebra. It follows from the identities (\ref{dend-id1})-(\ref{dend-id3}) that the bilinear operation $x \star y :=  x \prec y + x \succ y$ is associative. In other words, $(D, \star)$ is an associaative algebra.

(ii) Let $T: M \rightarrow A$ be an $\mathcal{O}$-operator on $M$ over the algebra $A$. Then $(M, \prec_T, \succ_T)$ is a dendriform algebra, where $u \prec_T v := u \cdot T(v)$ and $u \succ_T v := T(u) \cdot v$, for $u, v \in M$.
\end{remark}

Dendriform algebras arise naturally in the arithmetic of planar binary trees and shuffle algebras \cite{L01,G12}. See also \cite{dendr1,lod-val} for some interesting results about dendriform algebras.

\begin{defn}
A compatible dendriform algebra is a quintuple $(D, \prec_1, \succ_1, \prec_2, \succ_2)$ consisting of a vector space $D$ together with four bilinear operations
$\prec_1, \succ_1, \prec_2, \succ_2 : D \otimes D \rightarrow D$
so that $(D, \prec_1, \succ_1)$ and $(D, \prec_2, \succ_2)$ are both dendriform algebras satisfying for $x, y, z \in D$,
\begin{align}
(x\prec_1 y)\prec_2 z + (x\prec_2 y)\prec_1 z =~& x\prec_2(y\prec_1 z+y\succ_1 z) + x\prec_1(y\prec_2 z+y\succ_2 z), \label{comp-dend1}\\
 (x\succ_1 y)\prec_2 z + (x\succ_2 y)\prec_1 z =~& x\succ_2 (y\prec_1 z) + x\succ_1 (y\prec_2 z) ,\\
(x\prec_1 y+x\succ_1 y)\succ_2 z + (x\prec_2 y+x\succ_2 y)\succ_1 z =~& x\succ_2(y\succ_1 z) + x\succ_1 (y\succ_2 z). \label{comp-dend3}
\end{align}
\end{defn}

\begin{remark}
The compatibility conditions (\ref{comp-dend1})-(\ref{comp-dend3}) in a compatible dendriform algebra is equivalent to the fact that $(D, \lambda \prec_1 + \eta \prec_2 , \lambda \succ_1 + \eta \succ_2)$ is a dendriform algebra, for any $\lambda, \eta \in {\bf k}.$
\end{remark}

Let $(D, \prec_1, \succ_1, \prec_2, \succ_2)$ and $(D', \prec'_1, \succ'_1, \prec'_2, \succ'_2)$ be two compatible dendriform algebras. A morphism between them is a linear map $\psi : D \rightarrow D'$ which preserve the respective structure maps.

\begin{prop}\label{dend-tot}
Let $(D, \prec_1, \succ_1, \prec_2, \succ_2)$ be a compatible dendriform algebra. Then the triple $(D, \star_1, \star_2)$ is a compatible associative algebra, where $x \star_1 y := x\prec_1 y + x \succ_1 y$ and $x \star_2 y := x \prec_2 y + x \succ_2 y$. (This is called the total compatible associative algebra, denoted by $D^\mathrm{Tot}$).
\end{prop}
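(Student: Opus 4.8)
The plan is to avoid verifying the associative compatibility (\ref{ass-comp}) for $(\star_1, \star_2)$ by a direct expansion, and instead to reduce the statement to the assertion that the three products $\star_1$, $\star_2$ and $\star_1 + \star_2$ are each associative. Once this is established, I would invoke the elementary equivalence between such data and a compatible associative structure that is already exploited in the proof of Theorem~\ref{induced-comp-th}.

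First I would record that $(D, \star_1)$ and $(D, \star_2)$ are associative algebras: this is immediate from the first part of the Remark following the definition of a dendriform algebra, applied separately to $(D, \prec_1, \succ_1)$ and $(D, \prec_2, \succ_2)$, since $\star_i = \prec_i + \succ_i$ is exactly the associated associative product. To obtain associativity of the sum, I would use the Remark following the definition of a compatible dendriform algebra, specialised to $\lambda = \eta = 1$: this says that $(D, \prec_1 + \prec_2, \succ_1 + \succ_2)$ is again a dendriform algebra. Applying the same Remark once more, its associated associative product $(\prec_1 + \prec_2) + (\succ_1 + \succ_2) = \star_1 + \star_2$ is associative.

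To conclude, I would appeal to the fact that simultaneous associativity of $\star_1$, $\star_2$ and $\star_1 + \star_2$ is equivalent to $(D, \star_1, \star_2)$ being a compatible associative algebra. Expanding the associativity relation for $\star_1 + \star_2$ and subtracting the associativity relations for $\star_1$ and for $\star_2$ leaves precisely the compatibility identity (\ref{ass-comp}); this is the identical reduction used in the proof of Theorem~\ref{induced-comp-th}, so no new work is needed.

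I anticipate no serious obstacle: the mathematical content is simply the clean packaging of the dendriform identities into associative ones. Should a self-contained route be preferred, the compatibility (\ref{ass-comp}) for $(\star_1, \star_2)$ can alternatively be derived by directly adding the three compatibility identities (\ref{comp-dend1})--(\ref{comp-dend3}); after this sum the left-hand sides collapse to $(x \star_1 y)\star_2 z + (x \star_2 y)\star_1 z$ and the right-hand sides to $x \star_2 (y \star_1 z) + x \star_1 (y \star_2 z)$, so the only care required is to correctly merge the $\prec_i$ and $\succ_i$ terms into the products $\star_i$.
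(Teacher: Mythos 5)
Your proposal is correct, and its first step (associativity of $\star_1$ and $\star_2$, obtained by totalizing each dendriform structure separately) coincides with the paper's proof. Where you diverge is in how the compatibility identity (\ref{ass-comp}) is produced: the paper simply adds the respective sides of (\ref{comp-dend1})--(\ref{comp-dend3}) and observes that they collapse to $(x \star_1 y)\star_2 z + (x \star_2 y)\star_1 z = x \star_2 (y \star_1 z) + x \star_1 (y \star_2 z)$, which is exactly the computation you relegate to your ``self-contained alternative'' at the end, and your description of how the sides collapse is accurate. Your primary route instead invokes the Remark following the definition of compatible dendriform algebras (at $\lambda = \eta = 1$) to conclude that $(D, \prec_1+\prec_2, \succ_1+\succ_2)$ is dendriform, totalizes it to get associativity of $\star_1 + \star_2$, and then recovers (\ref{ass-comp}) by subtracting the two individual associativity relations from that of the sum. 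This is more modular and mirrors the paper's own treatment of Theorem \ref{induced-comp-th}, where compatibility of the induced products on $M$ is deduced precisely from associativity of their sum; it also makes explicit the useful equivalence between ``$\star_1$, $\star_2$, $\star_1+\star_2$ all associative'' and ``$(D,\star_1,\star_2)$ compatible associative.'' The trade-off is that your main route leans on a Remark that the paper states without proof (and whose verification is, in substance, the very addition you defer), so the paper's direct computation is the more self-contained of the two; but both arguments are valid, and your fallback is literally the paper's proof.
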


\begin{proof}
Since $(D, \prec_1, \succ_1)$ and $(D, \prec_2, \succ_2)$ are both dendriform algebras, it follows that $(D, \star_1)$ and $(D, \star_2)$ are both associative algebras. On the other hand, by adding the respective sides of the identities (\ref{comp-dend1})-(\ref{comp-dend3}), we get that
\begin{align*}
(x \star_1 y) \star_2 z + (x \star_2 y) \star_1 z = x \star_2 (y \star_1 z) + x \star_1 (y \star_2 z), \text{ for } x, y, z \in D.
\end{align*}
This shows that $(D, \star_1, \star_2)$ is a compatible associative algebra.
\end{proof}

It is well-known that dendriform algebras are related to pre-Lie algebras \cite{aguiar-pre-dend}. In the same way, compatible dendriform algebras are related to compatible pre-Lie algebras. We first recall some definitions about pre-Lie algebras \cite{aguiar-pre-dend,LSB19}.

\begin{defn}
(i) A (left) pre-Lie algebra is a pair $(P, \diamond)$ consisting of a vector space $P$ together with a bilinear operation $\diamond : P \otimes P \rightarrow P$ satisfying
\begin{align*}
( x \diamond y ) \diamond z - x \diamond (y \diamond z) = ( y \diamond x ) \diamond z - y \diamond (x \diamond z), ~ \text{ for } x, y, z \in P.
\end{align*}

(ii) The `subadjacent Lie algebra' of a pre-Lie algebra $(P, \diamond)$ is the Lie algebra $(P, [~,~]_\diamond)$, where
\begin{align*}
[x,y]_\diamond := x \diamond y - y \diamond x, \text{ for } x, y \in P.
\end{align*}

(iii) A compatible (left) pre-Lie algebra is a triple $(P, \diamond_1, \diamond_2)$ in which $(P, \diamond_1)$ and $(P, \diamond_2)$ are both (left) pre-Lie algebras satisfying additionally
\begin{align*}
( x \diamond_1 y ) \diamond_2 z &+ ( x \diamond_2 y ) \diamond_1 z - x \diamond_2 (y \diamond_1 z)  - x \diamond_1 (y \diamond_2 z)  \\&= ( y \diamond_1 x ) \diamond_2 z + ( y \diamond_2 x ) \diamond_1 z -  y \diamond_2 (x \diamond_1 z) - y \diamond_1 (x \diamond_2 z), ~ \text{ for } x, y, z \in P.
\end{align*}
\end{defn}

The following result is straightforward. Hence we omit the details.

\begin{prop}
(i) Let $(D, \prec_1, \succ_1, \prec_2, \succ_2)$ be a compatible dendriform algebra. Then $(D, \diamond_1, \diamond_2)$ is a compatible pre-Lie algebra, where $x \diamond_1 y := x \succ_1 y - y \prec_1 x$ and $x \diamond_2 y := x \succ_2 y - y \prec_2 x$, for $x, y \in D$.

(ii) Let $(P, \diamond_1, \diamond_2)$ be a compatible (left) pre-Lie algebra. Then $(P, [~,~]_{\diamond_1}, [~,~]_{\diamond_1})$ is a {compatible Lie algebra}.
\end{prop}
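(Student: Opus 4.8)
The plan is to avoid expanding the defining identities directly and instead lean on the two classical correspondences together with the ``sum-is-a-structure'' reformulation of compatibility that already appears in this paper (see the Remark following the definition of compatible dendriform algebras). The mechanism is the same in both parts: for bilinear products, saying that two (pre-Lie, resp.\ Lie) products are compatible is equivalent to saying that their sum is again a (pre-Lie, resp.\ Lie) product, since expanding the structure identity for the sum and cancelling the two ``pure'' identities leaves exactly the cross terms of the compatibility relation.

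For part (i), I would first invoke the classical result of Aguiar \cite{aguiar-pre-dend} that every dendriform algebra $(D, \prec, \succ)$ yields a (left) pre-Lie algebra under $x \diamond y := x \succ y - y \prec x$. Applying this separately to $(D, \prec_1, \succ_1)$ and to $(D, \prec_2, \succ_2)$ shows at once that $(D, \diamond_1)$ and $(D, \diamond_2)$ are pre-Lie algebras, so it remains only to establish the compatibility condition. By the reformulation above, this reduces to showing that $\diamond_1 + \diamond_2$ is pre-Lie. But by the Remark following (\ref{comp-dend3}), the quadruple $(D, \prec_1 + \prec_2, \succ_1 + \succ_2)$ is itself a dendriform algebra, and its associated pre-Lie product is exactly
\[
x (\succ_1 + \succ_2) y - y (\prec_1 + \prec_2) x = (x \diamond_1 y) + (x \diamond_2 y),
\]
that is, $\diamond_1 + \diamond_2$. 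Applying the Aguiar correspondence one more time, this sum is pre-Lie, which is what was needed.

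For part (ii), the argument runs in parallel. The skew-symmetrization of any pre-Lie algebra is a Lie algebra (as recalled in the definition of the subadjacent Lie algebra), so $[~,~]_{\diamond_1}$ and $[~,~]_{\diamond_2}$ are both Lie brackets. From the identity $x(\diamond_1 + \diamond_2)y - y(\diamond_1 + \diamond_2)x = [x,y]_{\diamond_1} + [x,y]_{\diamond_2}$ one reads off that $[~,~]_{\diamond_1} + [~,~]_{\diamond_2} = [~,~]_{\diamond_1 + \diamond_2}$. Since $(P, \diamond_1, \diamond_2)$ is compatible pre-Lie, the sum $\diamond_1 + \diamond_2$ is pre-Lie (again by the cross-term identity), so its skew-symmetrization $[~,~]_{\diamond_1 + \diamond_2}$ is a Lie bracket, forcing the compatibility of $[~,~]_{\diamond_1}$ and $[~,~]_{\diamond_2}$. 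I note in passing that the statement's second bracket should read $[~,~]_{\diamond_2}$ rather than $[~,~]_{\diamond_1}$.

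I do not expect a serious obstacle here: the computations are routine once the reformulation is in place, and the only point requiring a moment's care is verifying that compatibility of two bilinear products is genuinely equivalent to their sum being a structure of the same type. This is the elementary bilinearity step that underlies every compatible-structure argument in the paper, so I would record it once and reuse it for both parts, exactly as the author does elsewhere.
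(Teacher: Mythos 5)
Your proof is correct, but note that the paper itself offers no argument to compare against: it simply declares this proposition ``straightforward'' and omits the details. Your proposal therefore fills a genuine gap, and it does so more cleanly than the brute-force expansion that the word ``straightforward'' presumably alludes to. The key observation you isolate --- that for any algebraic structure whose defining identities are quadratic in the product (dendriform, pre-Lie, Lie), compatibility of two such products is equivalent to their sum being a product of the same type, because expanding the identity for the sum and cancelling the two pure identities leaves exactly the cross terms --- is precisely the mechanism the paper uses elsewhere (the Remark after the definition of compatible dendriform algebras, and the analogous statement for compatible bimodules), so invoking it here is both legitimate and in the spirit of the text. Combining it with Aguiar's dendriform-to-pre-Lie correspondence applied three times (to $(\prec_1,\succ_1)$, to $(\prec_2,\succ_2)$, and to the sum $(\prec_1+\prec_2,\succ_1+\succ_2)$), together with the identity $\diamond_1+\diamond_2 = $ the pre-Lie product of the sum dendriform structure, settles part (i); the parallel argument with skew-symmetrization and $[\,\cdot\,,\cdot\,]_{\diamond_1+\diamond_2}=[\,\cdot\,,\cdot\,]_{\diamond_1}+[\,\cdot\,,\cdot\,]_{\diamond_2}$ settles part (ii). Your observation that the statement of part (ii) contains a typo --- the conclusion should read $(P,[~,~]_{\diamond_1},[~,~]_{\diamond_2})$ rather than repeating $[~,~]_{\diamond_1}$ --- is also correct. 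The one step you should write out explicitly rather than assert is the cross-term cancellation itself for the pre-Lie identity, since the pre-Lie compatibility condition in the paper is stated with a particular arrangement of the eight terms, and the reader should see that it coincides term-by-term with the cross terms of the pre-Lie identity for $\diamond_1+\diamond_2$; this is a two-line verification.
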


With all these constructions, we have the following commutative diagram
\[
\xymatrix{
 \text{ comp. dendriform algebra} \ar[r]^{\mathrm{total}} \ar[d] & \text{ comp. associative algebra} \ar[d]^{\text{skew-symmetrization}} \\
 \text{ comp. pre-Lie algebra} \ar[r]_{\text{subadjacent}} &  \text{ comp. Lie algebra}. \\
}
\]

The following result has been proved in \cite{LSB19}.

\begin{prop}\label{ind-dend}
(i) Let $A$ be an associative algebra and $M$ be an $A$-bimodule. Let $(T_1, T_2)$ be a compatible $\mathcal{O}$-operator on $M$ over the algebra $A$. Then $(M, \prec_{T_1}, \succ_{T_1}, \prec_{T_2}, \succ_{T_2})$ is a compatible dendriform algebra, denoted by $M^\mathrm{Ind}$, where
\begin{align*}
u \prec_{T_1} v := u \cdot T_1 (v), ~~
u \succ_{T_1} v := T_1 (u) \cdot v, ~~
u \prec_{T_2} v := u \cdot T_2 (v) ~ \text{ and } ~
u \succ_{T_2} v := T_2 (u) \cdot v, ~\text{for } u, v \in M.
\end{align*}

(ii) Let $(T_1, T_2)$ and $(T'_1, T'_2)$ be  two compatible $\mathcal{O}$-operators on $M$ over the algebra $A$. If $(\phi, \psi)$ is a morphism of compatible $\mathcal{O}$-operators from $(T_1, T_2)$ to $(T'_1, T'_2)$ then $\psi : M \rightarrow M$ is a morphism of compatible dendriform algebras from $(M, \prec_{T_1}, \succ_{T_1}, \prec_{T_2}, \succ_{T_2})$ to $(M, \prec_{T'_1}, \succ_{T'_1}, \prec_{T'_2}, \succ_{T'_2})$.
\end{prop}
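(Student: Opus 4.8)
For part (i), the plan is to avoid a direct verification of the three compatibility identities (\ref{comp-dend1})--(\ref{comp-dend3}) and instead exploit the remark that these conditions are equivalent to requiring $(M, \lambda \prec_{T_1} + \eta \prec_{T_2}, \lambda \succ_{T_1} + \eta \succ_{T_2})$ to be a dendriform algebra for all $\lambda, \eta \in {\bf k}$. The crucial observation is that the induced dendriform operations depend linearly on the underlying operator. Indeed, for $u, v \in M$ one computes
\begin{align*}
u \, (\lambda \prec_{T_1} + \eta \prec_{T_2}) \, v = u \cdot (\lambda T_1 + \eta T_2)(v) = u \prec_{\lambda T_1 + \eta T_2} v,
\end{align*}
and likewise $u \, (\lambda \succ_{T_1} + \eta \succ_{T_2}) \, v = (\lambda T_1 + \eta T_2)(u) \cdot v = u \succ_{\lambda T_1 + \eta T_2} v$. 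Hence $\lambda \prec_{T_1} + \eta \prec_{T_2} = \prec_{\lambda T_1 + \eta T_2}$ and $\lambda \succ_{T_1} + \eta \succ_{T_2} = \succ_{\lambda T_1 + \eta T_2}$ as bilinear operations on $M$.

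Next I would invoke the fact, recorded just after the definition of a compatible $\mathcal{O}$-operator, that $\lambda T_1 + \eta T_2$ is itself an $\mathcal{O}$-operator on $M$ over $A$ for every $\lambda, \eta \in {\bf k}$. By the standard construction recalled in the remark following the definition of a dendriform algebra, any $\mathcal{O}$-operator $S$ yields a dendriform algebra $(M, \prec_S, \succ_S)$. Applying this to $S = \lambda T_1 + \eta T_2$ and combining with the identifications above shows that $(M, \lambda \prec_{T_1} + \eta \prec_{T_2}, \lambda \succ_{T_1} + \eta \succ_{T_2})$ is a dendriform algebra for all $\lambda, \eta$. Specializing to $(\lambda, \eta) = (1,0)$ and $(0,1)$ gives that $(M, \prec_{T_1}, \succ_{T_1})$ and $(M, \prec_{T_2}, \succ_{T_2})$ are each dendriform algebras, while the full one-parameter family supplies precisely the equivalent reformulation of the compatibility conditions. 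This establishes part (i).

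For part (ii) the plan is a direct verification that $\psi$ intertwines the four operations, using the morphism relations $\phi \circ T_1 = T_1' \circ \psi$, $\phi \circ T_2 = T_2' \circ \psi$, $\psi(a \cdot u) = \phi(a) \cdot \psi(u)$ and $\psi(u \cdot a) = \psi(u) \cdot \phi(a)$. For instance,
\begin{align*}
\psi(u \prec_{T_1} v) = \psi(u \cdot T_1(v)) = \psi(u) \cdot \phi(T_1(v)) = \psi(u) \cdot T_1'(\psi(v)) = \psi(u) \prec_{T_1'} \psi(v),
\end{align*}
and dually $\psi(u \succ_{T_1} v) = \phi(T_1(u)) \cdot \psi(v) = T_1'(\psi(u)) \cdot \psi(v) = \psi(u) \succ_{T_1'} \psi(v)$. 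Running the same computation with $T_2, T_2'$ in place of $T_1, T_1'$ handles $\prec_{T_2}$ and $\succ_{T_2}$, so $\psi$ preserves all four structure maps and is therefore a morphism of compatible dendriform algebras.

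I do not expect a genuine obstacle here. The only temptation to avoid is a brute-force expansion of (\ref{comp-dend1})--(\ref{comp-dend3}) in terms of the associative product and the bimodule actions, which is possible but lengthy; the linearity-in-the-operator argument reduces everything to the already-established single-operator case and keeps the proof short and transparent. Part (ii) is then purely a matter of unwinding the definitions.
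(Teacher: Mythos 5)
Your proposal is correct and takes essentially the same approach as the paper: for part (i) the paper likewise avoids expanding the compatibility identities, arguing that $T_1$, $T_2$ and $T_1+T_2$ are all $\mathcal{O}$-operators and that $\prec_{T_1+T_2}=\prec_{T_1}+\prec_{T_2}$, $\succ_{T_1+T_2}=\succ_{T_1}+\succ_{T_2}$, which is exactly your linearity-in-the-operator argument specialized to $(\lambda,\eta)=(1,1)$. Part (ii) coincides with the paper's direct verification, with your version merely making the intermediate step through $\phi$ explicit.
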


\begin{proof}
(i) Since $T_1$ is an $\mathcal{O}$-operator, it follows that $(M, \prec_{T_1}, \succ_{T_1})$ is a dendriform algebra. Similarly, $T_2$ is an $\mathcal{O}$-operator implies that $(M, \prec_{T_2}, \succ_{T_2})$ is a dendriform algebra. Finally, $T_1 + T_2$ is an $\mathcal{O}$-operator implies that $(M, \prec_{T_1 + T_2} = \prec_{T_1} + \prec_{T_2 },  \succ_{T_1 + T_2} = \succ_{T_1} + \succ_{T_2})$ is a dendriform algebra. This shows that $(M, \prec_{T_1}, \succ_{T_1}, \prec_{T_2}, \succ_{T_2})$ is a compatible dendriform algebra.

(ii) For any $u, v \in M$, we have
\begin{align*}
\psi (u \prec_{T_1} v) = \psi (u \cdot T_1 (v)) = \psi (u) \cdot T_1' (\psi (v)) = \psi (u) \prec_{T_1'} \psi (v), \\
\psi (u \succ_{T_1} v) = \psi (T_1 (u) \cdot v) = T_1' (\psi (u)) \cdot \psi (v) = \psi (u) \succ_{T_1'} \psi (v).
\end{align*}
Similarly, we have $\psi ( u \prec_{T_2} v) = \psi (u) \prec_{T_2'} \psi (v)$ and $\psi ( u \succ_{T_2} v) = \psi (u) \succ_{T_2'} \psi (v)$. This proves the result.
\end{proof}

\begin{remark}
Let $(T_1,T_2)$ be a compatible $\mathcal{O}$-operator on $M$ over the algebra $A$. Then $(M, \diamond_1, \diamond_2)$ is a compatible pre-Lie algebra, where
\begin{align*}
u \diamond_1 v := u \succ_1 v - v \prec_1 u ~~~ \text{ and } ~~~ u \diamond_2 v := u \succ_2 v - v \prec_2 u, \text{ for } u, v \in M.
\end{align*}
\end{remark}


\medskip

\noindent {\bf Cohomology of compatible dendriform algebras.}
Here we define cohomology theory of compatible dendriform algebras. We first recall certain combinatorial maps defined in \cite{Das11802}. Let $C_n$ be the set of first $n$ natural numbers. We will treat elements of $C_n$ as symbols. Hence we write $C_n = \{[1], [2],\ldots, [n]\}$. For each $m, n \geq 1$ and $1 \leq i \leq m$, we put the elements of $C_{m+n-1}$ into $m$ many boxes as follows:
\begin{align*}
\framebox{$[1]$} \quad \framebox{$[2]$} \quad \cdots \quad \framebox{$[i-1]$} \quad \framebox{$[i]$ \quad $[i+1]$ $\cdots$ $[i+n-1]$} \quad \framebox{$[i+n]$} \quad \cdots \quad \framebox{$[m+n-1]$}~.
\end{align*}
Observe that there is exactly one element in each box except the $i$-th box which contains $n$ elements. We define maps $R_{m; i, n} : C_{m+n-1} \rightarrow C_m$ and $S_{m; i, n} : C_{m+n-1} \rightarrow {\bf k}[C_n]$ by
\begin{align*}
R_{m;i, n} ([r]) =~& k ~~~ \text{ if } [r] \text{ lies in the }k\text{-th box},\\
S_{m;i, n} ([r]) =~& \begin{cases}
[1]+ \cdots + [n] ~ &\text{ if } [r] \not\in \{ [i], [i+1], \ldots, [i+n-1]\}\\
&~~~ (\text{i.e. if } r \text{ doesn't lie in the }i\text{-th box})\\
[r-i+1] ~ &\text{ if } [r] \in \{ [i], [i+1], \ldots, [i+n-1]\}.
\end{cases}
\end{align*}
For any vector space $D$ (not necessarily a dendriform algebra), one may now define maps (called partial compositions)
\begin{align*}
\circ_i :  \mathrm{Hom} (\mathbf{k}[C_{m}]\otimes D^{\otimes m}, D) ~ \otimes ~  \mathrm{Hom} (\mathbf{k}[C_{n}]\otimes D^{\otimes n}, D) \rightarrow  \mathrm{Hom} (\mathbf{k}[C_{m+n-1}]\otimes D^{\otimes m+n-1}, D) ~~ \text{ by }
\end{align*}
\begin{align*}
(f \circ_i g) ([r]; x_1, \ldots, x_{m+n-1} ) = f \big(  R_{m;i, n}[r]; x_1, \ldots, x_{i-1}, g \big( S_{m;i,n}[r]; x_i, \ldots, x_{i+n-1}  \big), x_{i+n},\ldots, x_{m+n-1}  \big),
\end{align*}
for $f \in \mathrm{Hom} (\mathbf{k}[C_{m}]\otimes D^{\otimes m}, D)$, $g \in \mathrm{Hom} (\mathbf{k}[C_{n}]\otimes D^{\otimes n}, D)$ and $[r] \in C_{m+n-1}$.
It has been observed in \cite{Das11802} that the collection $\{  \mathrm{Hom} (\mathbf{k}[C_{n}]\otimes D^{\otimes n}, D)  \}_{n \geq 1}$ of spaces with the above partial compositions forms a nonsymmetric operad. Therefore, there is a degree $-1$ graded Lie bracket on $\oplus_{n \geq 1} \mathrm{Hom} (\mathbf{k}[C_{n}]\otimes D^{\otimes n}, D) $ given by
\begin{align*}
\{ \! \! \{ f, g \} \! \! \}  = \sum_{i=1}^m (-1)^{(i-1)(n-1)} f \circ_i g ~-~ (-1)^{(m-1)(n-1)} \sum_{i=1}^n (-1)^{(i-1)(m-1)} g \circ_i f.
\end{align*}

\medskip

Let $(D, \prec_1, \succ_1, \prec_2, \succ_2)$ be a compatible dendriform algebra. Define $\pi_1, \pi_2 \in \mathrm{Hom}({\bf k}[C_2]\otimes D^{\otimes 2}, D )$ by
\begin{align*}
\pi_1 ([r];x, y) = \begin{cases}
x \prec_1 y &\text{ if } [r] =[1] \\
x \succ_1 y &\text{ if } [r] =[2]
\end{cases}
~~~ \text{ and } ~~~
\pi_2 ([r];x, y) = \begin{cases}
x \prec_2 y &\text{ if } [r] =[1] \\
x \succ_2 y &\text{ if } [r] =[2].
\end{cases}
\end{align*}
Since $(\prec_1, \succ_1)$ defines a dendriform structure on $D$, it follows that $\{ \! \! \{ \pi_1, \pi_1 \} \! \! \} = 0$. Similarly, we have $\{ \! \! \{ \pi_2 , \pi_2 \} \! \! \} = 0$. Finally, the compatibility conditions (\ref{comp-dend1})-(\ref{comp-dend3}) is equivalent to $\{ \! \! \{ \pi_1, \pi_2 \} \! \! \} = 0$. We are now in a position to define the cohomology of the compatible dendriform algebra $(D, \prec_1, \succ_1, \prec_2, \succ_2)$.

For each $n \geq 1$, we define an abelian group $C^n_\mathrm{cDend}(D,D)$ by
\begin{align*}
C^n_\mathrm{cDend} (D,D) = \underbrace{ \mathrm{Hom} (\mathbf{k}[C_{n}]\otimes D^{\otimes n}, D) \oplus \cdots \oplus \mathrm{Hom} (\mathbf{k}[C_{n}]\otimes D^{\otimes n}, D)   }_{n\text{ summand}}
\end{align*}
and define a map $\delta_\mathrm{cDend} : C^n_\mathrm{cDend} (D,D) \rightarrow C^{n+1}_\mathrm{cDend} (D,D)$ by
\begin{align*}
\delta_\mathrm{cDend} ((f_1, \ldots, f_n)) = (-1)^{n-1} \big(   \{ \! \! \{ \pi_1, f_1 \} \! \! \}, \ldots, \underbrace{ \{ \! \! \{ \pi_1, f_i \} \! \! \} + \{ \! \! \{ \pi_2, f_{i-1} \} \! \! \} }_{i\text{-th place}} , \ldots, \{ \! \! \{ \pi_2, f_n \} \! \! \} \big).
\end{align*}

\begin{prop}
The map $\delta_\mathrm{cDend}$ satisfies $(\delta_\mathrm{cDend})^2 =0$.
\end{prop}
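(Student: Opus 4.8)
The plan is to reduce the statement to three operator identities and then expand $\delta_\mathrm{cDend}^2$ componentwise. Introduce, for $a \in \{1, 2\}$, the degree $+1$ operators $D_a := \{ \! \! \{ \pi_a, - \} \! \! \}$ on $\oplus_{n \geq 1} \mathrm{Hom}(\mathbf{k}[C_n] \otimes D^{\otimes n}, D)$, where this graded space is given the shifted grading in which an arity-$n$ map sits in degree $n-1$, so that the degree $-1$ operadic bracket becomes a genuine graded Lie bracket and each $\pi_a$ (arity $2$) has degree $1$. The entire proof rests on establishing
\[
D_1 \circ D_1 = 0, \qquad D_2 \circ D_2 = 0, \qquad D_1 \circ D_2 + D_2 \circ D_1 = 0.
\]

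To obtain these, I would apply the graded Jacobi identity of $\{ \! \! \{ -, - \} \! \! \}$ together with the relations $\{ \! \! \{ \pi_1, \pi_1 \} \! \! \} = \{ \! \! \{ \pi_2, \pi_2 \} \! \! \} = \{ \! \! \{ \pi_1, \pi_2 \} \! \! \} = 0$ recorded just before the statement (the first two express that $(\prec_1, \succ_1)$ and $(\prec_2, \succ_2)$ are dendriform, the third being the compatibility (\ref{comp-dend1})--(\ref{comp-dend3})). Since $|\pi_a| = 1$, the Jacobi identity gives $\{ \! \! \{ \pi_a, \{ \! \! \{ \pi_b, f \} \! \! \} \} \! \! \} = \{ \! \! \{ \{ \! \! \{ \pi_a, \pi_b \} \! \! \}, f \} \! \! \} - \{ \! \! \{ \pi_b, \{ \! \! \{ \pi_a, f \} \! \! \} \} \! \! \}$. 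Taking $a = b$ yields $2 D_a^2 = \{ \! \! \{ \{ \! \! \{ \pi_a, \pi_a \} \! \! \}, - \} \! \! \} = 0$, hence $D_a^2 = 0$ since $\mathrm{char}(\mathbf{k}) = 0$; taking $a = 1, b = 2$ yields $D_1 D_2 = -D_2 D_1$.

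Then I would expand. Writing a cochain $(f_1, \ldots, f_n) \in C^n_\mathrm{cDend}(D,D)$ with the convention $f_0 = f_{n+1} = 0$, the definition reads $\delta_\mathrm{cDend}(f_1, \ldots, f_n)_i = (-1)^{n-1}(D_1 f_i + D_2 f_{i-1})$ for $i = 1, \ldots, n+1$. Applying $\delta_\mathrm{cDend}$ once more to this $(n+1)$-tuple contributes the sign $(-1)^{n}$, so the $j$-th component of $\delta_\mathrm{cDend}^2(f_1, \ldots, f_n)$ equals
\[
(-1)^{2n-1}\bigl( D_1^2 f_j + (D_1 D_2 + D_2 D_1) f_{j-1} + D_2^2 f_{j-2} \bigr),
\]
which vanishes by the three identities above. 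Hence $\delta_\mathrm{cDend}^2 = 0$.

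I expect the only real subtlety to be bookkeeping: confirming that the shifted operadic grading genuinely turns $\{ \! \! \{ -, - \} \! \! \}$ into a graded Lie bracket (so that the Jacobi identity and the sign $(-1)^{|\pi_a||\pi_b|}$ are legitimate), and tracking the $(-1)^{n-1}$ prefactors through the two applications of $\delta_\mathrm{cDend}$. Conceptually there is no obstacle: this is precisely an instance of the general construction of Proposition \ref{new-gla-prop}, with $\mathfrak{g}$ the shifted graded Lie algebra of the dendriform operad and $(\pi_1, \pi_2)$ a Maurer--Cartan element of $\mathfrak{g}_c$, so that $\delta_\mathrm{cDend}$ agrees up to sign with $\llbracket (\pi_1, \pi_2), - \rrbracket$ and squares to zero automatically. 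The argument is entirely parallel to the proof that $\delta_{(T_1, T_2)}^2 = 0$ for compatible $\mathcal{O}$-operators.
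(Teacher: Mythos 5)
Your proof is correct and follows essentially the same route as the paper: both expand $(\delta_\mathrm{cDend})^2$ componentwise and reduce each component to the three relations $\{ \! \! \{ \pi_1, \pi_1 \} \! \! \} = \{ \! \! \{ \pi_2, \pi_2 \} \! \! \} = \{ \! \! \{ \pi_1, \pi_2 \} \! \! \} = 0$, the only difference being that you make explicit the graded Jacobi identity step (via the operators $D_a$ and the identities $D_a D_b + D_b D_a = \{ \! \! \{ \{ \! \! \{ \pi_a, \pi_b \} \! \! \}, - \} \! \! \}$) that the paper's proof invokes implicitly when it declares the components zero.
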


\begin{proof}
For any $(f_1, \ldots, f_n) \in C^n_\mathrm{cDend} (D,D)$, we have
\begin{align*}
    &(\delta_\mathrm{cDend})^2 ((f_1,\cdots,f_{n}))\\
    &=(-1)^{n-1} ~ \big(   \{ \! \! \{ \pi_1, f_1 \} \! \! \}, \ldots, \underbrace{ \{ \! \! \{ \pi_1, f_i \} \! \! \} + \{ \! \! \{ \pi_2, f_{i-1} \} \! \! \} }_{i\text{-th place}} , \ldots, \{ \! \! \{ \pi_2, f_n \} \! \! \} \big)\\
    &=- \big( \{ \! \! \{ \pi_1,\{ \! \! \{ \pi_1,f_1 \} \! \! \}  \} \! \! \}, \{ \! \! \{ \pi_1, \{ \! \! \{ \pi_1,f_2 \} \! \! \} \} \! \! \} + \{ \! \! \{ \pi_1,\{ \! \! \{ \pi_2,f_1 \} \! \! \}  \} \! \! \} + \{ \! \! \{ \pi_2,\{ \! \! \{ \pi_1,f_1 \} \! \! \}\} \! \! \}  ,\ldots,\\
    & ~ \qquad \underbrace{\{ \! \! \{ \pi_1,\{ \! \! \{ \pi_1,f_{i}\} \! \! \} \} \! \! \} + \{ \! \! \{ \pi_1,\{ \! \! \{ \pi_2,f_{i-1} \} \! \! \} \} \! \! \} + \{ \! \! \{ \pi_2,\{ \! \! \{ \pi_1,f_{i-1} \} \! \! \} \} \! \! \} + \{ \! \! \{ \pi_2, \{ \! \! \{ \pi_2,f_{i-2} \} \! \! \} \} \! \! \} }_{3\leq i\leq n-1},\ldots,\\
& ~ \qquad \{ \! \! \{ \pi_1,\{ \! \! \{ \pi_2,f_n \} \! \! \} \} \! \! \} + \{ \! \! \{ \pi_2,\{ \! \! \{ \pi_1,f_n \} \! \! \} \} \! \! \} +  \{ \! \! \{ \pi_2, \{ \! \! \{ \pi_2,f_{n-1} \} \! \! \} \} \! \! \},
\{ \! \! \{ \pi_2,\{ \! \! \{ \pi_2,f_n \} \! \! \} \} \! \! \} \big)\\
    &=(0,0,\ldots,0)   \qquad (\text{as } \{ \! \! \{ \pi_1, \pi_1 \} \! \! \} = \{ \! \! \{ \pi_2, \pi_2 \} \! \! \} = \{ \! \! \{ \pi_1, \pi_2 \} \! \! \} = 0).
\end{align*}
This completes the proof.
\end{proof}

The cohomology groups of the cochain complex $\{ C^\bullet_\mathrm{cDend} (D,D), \delta_\mathrm{cDend} \}$ are called the cohomology of the compatible dendriform algebra $(D, \prec_1, \succ_1, \prec_2, \succ_2 )$.

\medskip

\medskip

\noindent {\bf Relation with the cohomology of compatible associative algebras.} Here we find a morphism from the cohomology of a compatible dendriform algebra to the cohomology of the corresponding total compatible associative algebra.

\medskip

Let $(D, \prec_1, \succ_1, \prec_2, \succ_2 )$ be a compatible dendriform algebra. Then we have seen in Proposition \ref{dend-tot} that $D^\mathrm{Tot} = (D, \star_1, \star_2)$ is a compatible associative algebra. Let $\{ C^\bullet_\mathrm{cAss}(D^\mathrm{Tot}, D^\mathrm{Tot}), \delta_{cAss}  \}$ be the cochain complex defining the cohomology of the compatible associative algebra $D^\mathrm{Tot}$ with coefficients in the adjoint compatible bimodule. For each $n \geq 1$, we define a map
\begin{center}
$\Phi_n : C^n_\mathrm{cDend} (D,D) \rightarrow C^n_\mathrm{cAss} (D^\mathrm{Tot}, D^\mathrm{Tot} )$  ~~ by
\end{center}
\begin{align*}
\Phi_n ((f_1, \ldots, f_n )) = (f_1^\mathrm{Tot}, \ldots, f_n^\mathrm{Tot}), \text{ where } f_i^\mathrm{Tot} (x_1, \ldots, x_n ) = \sum_{r=1}^n f_i ([r]; x_1, \ldots, x_n).
\end{align*}
It has been shown in \cite{Das11802} that $\delta^1_\mathrm{Ass} (f_i^\mathrm{Tot}) = (-1)^{n-1} \{ \! \! \{ \pi_1, f_i \} \! \! \}^\mathrm{Tot}$ and $\delta^2_\mathrm{Ass} (f_i^\mathrm{Tot}) = (-1)^{n-1} \{ \! \! \{ \pi_2, f_i \} \! \! \}^\mathrm{Tot}$. Here $\delta^1_\mathrm{Ass}$ (resp. $\delta^2_\mathrm{Ass}$) denotes the Hochschild coboundary operator of the associative algebra $(D, \star_1)$ ~\big(resp. $(D, \star_2)$\big) with coefficients in itself. Therefore, we have
\begin{align*}
(\delta_\mathrm{cAss} \circ \Phi_n) ((f_1, \ldots, f_n ))
=~& \delta_\mathrm{cAss} (  f_1^\mathrm{Tot}, \ldots, f_n^\mathrm{Tot} ) \\
=~& \big( \delta^1_\mathrm{Ass} ( f_1^\mathrm{Tot}), \ldots, \delta^1_\mathrm{Ass} ( f_i^\mathrm{Tot}) + \delta^2_\mathrm{Ass} ( f_{i-1}^\mathrm{Tot}) , \ldots, \delta^2_\mathrm{Ass} ( f_n^\mathrm{Tot})\big) \\
=~& (-1)^{n-1} \big( \{ \! \! \{ \pi_1, f_1 \} \! \! \}^\mathrm{Tot},  \ldots, \{ \! \! \{ \pi_1, f_i \} \! \! \}^\mathrm{Tot} + \{ \! \! \{ \pi_2, f_{i-1} \} \! \! \}^\mathrm{Tot} , \ldots, \{ \! \! \{ \pi_2, f_n \} \! \! \}^\mathrm{Tot}\big) \\
=~& (\Phi_{n+1} \circ \delta_\mathrm{cDend}) ((f_1, \ldots, f_n )).
\end{align*}
As a consequence, we get the following.

\begin{thm}
Let $(D, \prec_1, \succ_1, \prec_2, \succ_2 )$ be a compatible dendriform algebra and let $D^\mathrm{Tot} = (D, \star_1, \star_2)$ be the corresponding total compatible associative algebra. Then the collection $\{ \Phi_n \}_{n \geq 1}$ of maps induces a morphism $H^\bullet_\mathrm{cDend}(D,D) \rightarrow H^\bullet_\mathrm{cAss} (D^\mathrm{Tot},D^\mathrm{Tot})$ between corresponding cohomologies.
\end{thm}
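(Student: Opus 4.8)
The plan is to read the assertion as an instance of the elementary fact that a morphism of cochain complexes descends to a morphism on cohomology; thus the substantive task is to check that the family $\{\Phi_n\}_{n\ge 1}$ is a cochain map, that is, that it intertwines the two differentials. Concretely I would establish
\[
\delta_\mathrm{cAss}\circ\Phi_n \;=\; \Phi_{n+1}\circ\delta_\mathrm{cDend}\colon\; C^n_\mathrm{cDend}(D,D)\longrightarrow C^{n+1}_\mathrm{cAss}(D^\mathrm{Tot},D^\mathrm{Tot}),
\]
which is precisely the chain of equalities displayed just before the statement.

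First I would recall the single-structure input of \cite{Das11802}: for $f\in\mathrm{Hom}(\mathbf{k}[C_n]\otimes D^{\otimes n},D)$ one has $\delta^1_\mathrm{Ass}(f^\mathrm{Tot})=(-1)^{n-1}\{ \! \! \{ \pi_1,f \} \! \! \}^\mathrm{Tot}$ and $\delta^2_\mathrm{Ass}(f^\mathrm{Tot})=(-1)^{n-1}\{ \! \! \{ \pi_2,f \} \! \! \}^\mathrm{Tot}$, where $\delta^1_\mathrm{Ass}$ and $\delta^2_\mathrm{Ass}$ are the Hochschild coboundary operators of $(D,\star_1)$ and $(D,\star_2)$ with coefficients in themselves. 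Then I would apply $\delta_\mathrm{cAss}$ to $\Phi_n((f_1,\dots,f_n))=(f_1^\mathrm{Tot},\dots,f_n^\mathrm{Tot})$, expand by the componentwise formula for $\delta_\mathrm{cAss}$, and substitute these two relations in every slot; since the total operation $(-)^\mathrm{Tot}$ is linear, the doubled $i$-th entry $\delta^1_\mathrm{Ass}(f_i^\mathrm{Tot})+\delta^2_\mathrm{Ass}(f_{i-1}^\mathrm{Tot})$ collapses to $(-1)^{n-1}(\{ \! \! \{ \pi_1,f_i \} \! \! \}+\{ \! \! \{ \pi_2,f_{i-1} \} \! \! \})^\mathrm{Tot}$. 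Comparing slot-by-slot with $\Phi_{n+1}(\delta_\mathrm{cDend}((f_1,\dots,f_n)))$, in which the same prefactor $(-1)^{n-1}$ appears, yields the chain-map identity.

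Once the chain-map property is in hand, the descent to cohomology is routine and I would spell it out briefly: if $\omega$ is a $\delta_\mathrm{cDend}$-cocycle then $\delta_\mathrm{cAss}\Phi_n(\omega)=\Phi_{n+1}\delta_\mathrm{cDend}(\omega)=0$, so cocycles map to cocycles; and if $\omega=\delta_\mathrm{cDend}(\eta)$ then $\Phi_n(\omega)=\delta_\mathrm{cAss}\Phi_{n-1}(\eta)$, so coboundaries map to coboundaries. Hence each $\Phi_n$ induces a well-defined map $H^n_\mathrm{cDend}(D,D)\to H^n_\mathrm{cAss}(D^\mathrm{Tot},D^\mathrm{Tot})$, and these assemble into the asserted morphism. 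The only real obstacle is bookkeeping rather than conceptual: one must keep the common factor $(-1)^{n-1}$ aligned across the two differentials and handle correctly the boundary slots ($i=1$ and $i=n+1$) as well as the doubled interior slots where the two brackets meet, all while relying on the linearity of $(-)^\mathrm{Tot}$ so that sums of composition brackets commute with totalization.
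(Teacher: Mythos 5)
Your proposal is correct and follows essentially the same route as the paper: it invokes the same single-structure identities from \cite{Das11802} relating $\delta^1_\mathrm{Ass}$, $\delta^2_\mathrm{Ass}$ on totalizations to the brackets $\{ \! \! \{ \pi_1, - \} \! \! \}$, $\{ \! \! \{ \pi_2, - \} \! \! \}$, performs the same slot-by-slot verification that $\delta_\mathrm{cAss}\circ\Phi_n = \Phi_{n+1}\circ\delta_\mathrm{cDend}$, and then descends to cohomology by the standard cochain-map argument. The paper's proof is exactly this computation (displayed just before the theorem), so there is nothing to add.
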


\medskip

\noindent {\bf Relation with the cohomology of compatible $\mathcal{O}$-operators.} Let $A$ be an associative algebra and $M$ be an $A$-bimodule. Suppose $(T_1, T_2)$ is a compatible $\mathcal{O}$-operator on $M$ over the algebra $A$. Then we have seen in Proposition \ref{ind-dend} that $M$ carries a compatible dendriform algebra structure (denoted by $M^\mathrm{Ind}$). Let $\pi_1, \pi_2 \in \mathrm{Hom}({\bf k}[C_2]\otimes M^{\otimes 2}, M)$ be the elements given by
\begin{align*}
\pi_1 ([r];u,v) = \begin{cases}
u \prec_{T_1} v &\text{ if } [r] =[1] \\
u \succ_{T_1} v &\text{ if } [r] =[2]
\end{cases}
~~~ \text{ and } ~~~
\pi_2 ([r];u,v) = \begin{cases}
u \prec_{T_2} v &\text{ if } [r] =[1] \\
u \succ_{T_2} v &\text{ if } [r] =[2].
\end{cases}
\end{align*}
In the following, we find a morphism from the cohomology of the compatible $\mathcal{O}$-operator $(T_1, T_2)$ to the cohomology of the compatible dendriform algebra $M^\mathrm{Ind}$.

For each $n \geq 1$, we define a map
$\Psi_n : C^n_\mathrm{cO}(M,A) \rightarrow C^{n+1}_\mathrm{cDend}(M^\mathrm{Ind}, M^\mathrm{Ind})$ by
\begin{align*}
\Psi_n ((f_1, \ldots, f_{n+1})) := (f_1^\mathrm{Ind}, \ldots, f_{n+1}^\mathrm{Ind}), ~~\mathrm{ where}
\end{align*}
\begin{align*}
(f_i^\mathrm{Ind}) ([r]; u_1, \ldots, u_{n+1}) = \begin{cases} (-1)^{n+1} ~u_1 \cdot f_i (u_2, \ldots, u_{n+1}) & \text{ if }~ [r] = [1] \\
f_i(u_1, \ldots, u_n) \cdot u_{n+1} & \text{ if }~ [r] = [n+1] \\
0 & \text{ if }~ [r] \neq [1], [n+1] . \end{cases}
\end{align*}
It has been shown in \cite[Subsec 3.1]{D20} that $\{ \! \! \{ \pi_1, f_i^\mathrm{Ind} \} \! \! \} = (-1)^n ~[T_1, f_i ]^\mathrm{Ind}$ and $\{ \! \! \{ \pi_2, f_i^\mathrm{Ind} \} \! \! \} = (-1)^n ~[T_2, f_i ]^\mathrm{Ind}$, for all $i=1, \ldots, n+1$. Therefore, we have
\begin{align*}
~&(\delta_\mathrm{cDend} \circ \Psi_n) ((f_1, \ldots, f_{n+1})) = \delta_\mathrm{cDend} ( f_1^\mathrm{Ind}, \ldots, f_{n+1}^\mathrm{Ind} ) \\
&= (-1)^n \big(  \{ \! \! \{ \pi_1, f_1^\mathrm{Ind} \} \! \! \}, \ldots, \{ \! \! \{ \pi_1, f_i^\mathrm{Ind} \} \! \! \} + \{ \! \! \{ \pi_2, f_{i-1}^\mathrm{Ind} \} \! \! \}, \ldots, \{ \! \! \{ \pi_2, f_{n+1}^\mathrm{Ind} \} \! \! \} \big) \\
&= \big(  [T_1, f_1]^\mathrm{Ind}, \ldots, [T_1, f_i]^\mathrm{Ind}  + [T_2, f_{i-1}]^\mathrm{Ind}, \ldots, [T_2, f_{n+1}]^\mathrm{Ind} \big) \\
&=(\Psi_{n+1} \circ \delta_{(T_1, T_2)} ) ((f_1, \ldots, f_{n+1})).
\end{align*}
As a consequence, we get the following.
\begin{thm}
Let $(T_1,T_2)$ be a compatible $\mathcal{O}$-operator on $M$ over the algebra $A$. Then the collection $\{ \Psi_n \}_{n \geq 1}$ of maps induces a morphism $H^\bullet_{(T_1, T_2)} (M,A) \rightarrow H^{\bullet +1}_\mathrm{cDend}(M^\mathrm{Ind}, M^\mathrm{Ind})$ from the cohomology of the compatible $\mathcal{O}$-operator $(T_1, T_2)$ to the cohomology of the induced compatible dendriform algebra $M^\mathrm{Ind}$.
\end{thm}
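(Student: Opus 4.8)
The plan is to show that the family $\{\Psi_n\}_{n\geq 1}$ constitutes a cochain map (up to the degree shift by one that is built into its definition) from the complex $\{C^\bullet_\mathrm{cO}(M,A),\delta_{(T_1,T_2)}\}$ to $\{C^\bullet_\mathrm{cDend}(M^\mathrm{Ind},M^\mathrm{Ind}),\delta_\mathrm{cDend}\}$; granting this, a cochain map descends automatically to a morphism of the associated cohomology groups, and since $\Psi_n$ raises degree by one, the induced map is exactly the asserted $H^\bullet_{(T_1,T_2)}(M,A)\to H^{\bullet+1}_\mathrm{cDend}(M^\mathrm{Ind},M^\mathrm{Ind})$. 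So the entire content reduces to the single identity $\delta_\mathrm{cDend}\circ\Psi_n=\Psi_{n+1}\circ\delta_{(T_1,T_2)}$.

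First I would observe that $\Psi_n$ is well defined and respects the tuple structure: both $C^n_\mathrm{cO}(M,A)$ and $C^{n+1}_\mathrm{cDend}(M^\mathrm{Ind},M^\mathrm{Ind})$ are direct sums of $n+1$ copies (of $\mathrm{Hom}(M^{\otimes n},A)$ and of $\mathrm{Hom}(\mathbf{k}[C_{n+1}]\otimes M^{\otimes n+1},M)$ respectively), and $\Psi_n$ acts componentwise by $f_i\mapsto f_i^\mathrm{Ind}$, so the lengths match. The essential input is then the single-operator identity $\{ \! \! \{ \pi_1, f^\mathrm{Ind} \} \! \! \}=(-1)^n[T_1,f]^\mathrm{Ind}$ together with $\{ \! \! \{ \pi_2, f^\mathrm{Ind} \} \! \! \}=(-1)^n[T_2,f]^\mathrm{Ind}$ for $f\in\mathrm{Hom}(M^{\otimes n},A)$, established in \cite[Subsec 3.1]{D20}; here it is applied separately to the two $\mathcal{O}$-operators, namely $T_1$ with the products $\prec_{T_1},\succ_{T_1}$ recorded in $\pi_1$, and $T_2$ with $\prec_{T_2},\succ_{T_2}$ recorded in $\pi_2$. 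Substituting these two identities into the definition of $\delta_\mathrm{cDend}$ and comparing with $\delta_{(T_1,T_2)}$ — which assembles $[T_1,-]$ and $[T_2,-]$ in exactly the same staircase pattern, the $i$-th slot being the $\pi_1$-contribution of the $i$-th entry plus the $\pi_2$-contribution of the $(i-1)$-th entry — the two sides agree slot by slot once the common overall sign $(-1)^n$ is accounted for. This is precisely the computation displayed immediately before the statement.

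The step I expect to be the only real obstacle is the componentwise bracket identity itself, since $\delta_\mathrm{cDend}$ is built from the operadic bracket $\{ \! \! \{ -, - \} \! \! \}$ on $\oplus_n\mathrm{Hom}(\mathbf{k}[C_n]\otimes D^{\otimes n},D)$, whereas $\delta_{(T_1,T_2)}$ is built from the derived bracket $\llbracket -, - \rrbracket$; one must verify that the assignment $f\mapsto f^\mathrm{Ind}$ intertwines these two brackets (against each of $\pi_1,\pi_2$) up to the sign $(-1)^n$. This, however, is exactly what the cited reference verifies for a single $\mathcal{O}$-operator, so in the compatible setting nothing new is required beyond matching the two identical staircase patterns entry by entry, which is purely formal bookkeeping. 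The chain-map property follows, and with it the induced morphism on cohomology.
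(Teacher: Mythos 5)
Your proposal is correct and follows essentially the same route as the paper: reduce the statement to the chain-map identity $\delta_\mathrm{cDend}\circ\Psi_n=\Psi_{n+1}\circ\delta_{(T_1,T_2)}$, invoke the single-operator identities $\{ \! \! \{ \pi_1, f^\mathrm{Ind} \} \! \! \}=(-1)^n[T_1,f]^\mathrm{Ind}$ and $\{ \! \! \{ \pi_2, f^\mathrm{Ind} \} \! \! \}=(-1)^n[T_2,f]^\mathrm{Ind}$ from \cite[Subsec 3.1]{D20}, and match the two staircase-shaped differentials component by component, with the sign $(-1)^n$ from $\delta_\mathrm{cDend}$ cancelling against the sign in the bracket identities. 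This is exactly the computation the paper performs before stating the theorem, so no further comment is needed.
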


\medskip

\noindent {\bf {Acknowledgements.}}
A. Das thanks the Department of Mathematics, IIT Kharagpur (India) for providing the beautiful academic atmosphere. The work of S. Guo is supported by the NSF of China (No. 12161013).     Y. Qin is financed by  the NSF of China  (No.    12071137)   and  by  STCSM  (No. 18dz2271000).





\begin{thebibliography}{BFGM03}















\bibitem{A99} M. Aguiar,  Infinitesimal Hopf algebras, Contemporary Mathematics, New trends in Hopf algebra theory
(La Falda, 1999), 1-29, Contemp. Math., 267, Amer. Math. Soc., Providence, RI, (2000).

\bibitem{aguiar-pre-dend}   M. Aguiar, Infinitesimal bialgebras, pre-Lie and dendriform algebras, In: Hopf algebras, 1-33, Lecture Notes
in Pure and Appl. Math. 237, Dekker, New York, 2004.

\bibitem{atkinson} F. V. Atkinson, Some aspects of Baxter's functional equation, {\em J. Math. Anal. Appl.} 7 (1963), 1-30.


\bibitem{BBGN13} C. Bai, O. Bellier, L. Guo and X. Ni, Splitting of operations, Manin products, and Rota-Baxter operators, {\em Int. Math. Res. Not.} 3(2013), 485-524.

\bibitem{bgn} C. Bai, L. Guo and X. Ni, $\mathcal{O}$-operators on associative algebras and associative Yang-Baxter equations, {\em Pacific J. Math.} 256 (2012), 257-289.



\bibitem{B60} G. Baxter, An analytic problem whose solution follows from a simple algebraic identity, {\em Pacific J. Math.} 10 (1960), 731-742.

\bibitem{dual1} A. V. Bolsinov and A. V. Borisov, Lax representation and compatible Poisson brackets on Lie algebras, {\em Math. Notes} 72 (2002), 11-34.




\bibitem{cartier} P. Cartier, On the structure of free Baxter algebras, {\em Adv. Math.} 9 (1972), 253-265.



\bibitem{cdm} T. Chtioui, A. Das and S. Mabrouk, (Co)homology of compatible associative algebras, arXiv:2107.09259.

\bibitem{CK00} A. Connes and D. Kreimer, Renormalization in quantum field theory and the Riemann-Hilbert problem. I. The Hopf algebra structure of graphs and the main theorem, {\em Comm. Math. Phys.} 210 (2000) 249-273.

\bibitem{D20}  A. Das, Deformations of associative Rota-Baxter operators, {\em J. Algebra} 560 (2020), 144-180.

\bibitem{Das11802} A. Das, Cohomology and deformations of dendriform algebras, and Dend$_{\infty}$-algebras, {\em Comm. Algebra}, 50 (4) (2022), 1544-1567.

\bibitem{DasMishra} A. Das and S. K. Mishra, The $L_{\infty}$-deformations of associative Rota-Baxter algebras and homotopy Rota-Baxter operators, {\em J. Math. Phys.} 63 (5) (2022) 051703.

\bibitem{dendr1} K. Ebrahimi-Fard, Loday-type algebras and the Rota-Baxter relation, {\em Lett. Math. Phys.} 61 (2002), 139-147.


\bibitem{EG08} K. Ebrahimi-Fard and L. Guo, Rota-Baxter algebras and dendriform algebras,  {\em J. Pure Appl. Algebra} 212 (2008), 320-339.




\bibitem{G64}  M. Gerstenhaber, On the deformation of rings and algebras, {\em Ann. of Math.} 79(2) (1964), 59-103.


\bibitem{getzler} E. Getzler, Lie theory for nilpotent $L_\infty$-algebras, {\em Ann. of Math. (2)} 170 (2009), no. 1, 271-301. 

\bibitem{guo-free-rota} L. Guo and W. Keigher, Baxter algebras and shuffle products, {\em Adv. Math.} 150 (2000), no. 1, 117-149.

\bibitem{G12}  L. Guo, An Introduction to Rota-Baxter Algebra, International Press, 2012.






\bibitem{yks-mag} Y. Kosmann-Schwarzbach and F. Magri, Poisson-Nijenhuis structures, {\em Ann. Inst. Henri Poincar\'{e} A} 53 (1990), 35-81.

\bibitem{kuper}B. A. Kuperschmidt, What a classical r-matrix really is? {\em J. Nonlinear Math. Phys.} 6 (4) (1999), 448-488.

\bibitem{L01} J.-L. Loday, Dialgebras, in: Dialgebras and Related Operads, in: Lecture Notes in Math.  vol. 1763, Springer, Berlin, 2001, pp. 7-66.

\bibitem{lod-val} J.-L. Loday and B. Vallette, Algebraic operads, Springer.

\bibitem{LSB19} J. Liu, Y. Sheng and C. Bai, Compatible $\mathcal{O}$-operators on bimodules over associative algebras,  {\em J. Algebra} 532 (2019), 80-118.




\bibitem{M18}  S. M\'{a}rquez,    Compatible associative bialgebras,  {\em Comm. Algebra}  46(9)(2018), 3810-3832.


\bibitem{dual2} A. G. Reyman and M. A. Semenov-Tian-Shansky, Compatible Poisson structures for Laz equations: an r-matrix approach, {\em Phys. Lett. A} 130 (1988), 456-460.

\bibitem{R-rota} G.-C. Rota,  Baxter algebras and combinatorial identities, I, II, {\em Bull. Amer. Math. Soc.} 75 (1969), 325-339; ibid. 75 1969 330-334.

\bibitem{S83} M. A. Semonov-Tian-Shansky, What is a classical $R$-matrix$?$, {\em Funct. Anal. Appl.} 17 (1983), 259-272.

\bibitem{Stasheff} J. Stasheff, Homotopy associativity of $H$-spaces I, {\em Trans. Amer. Math. Soc.} 108 (1963), p. 275-292.


\bibitem{shengO} R. Tang, C. Bai, L. Guo and Y. Sheng, Deformations and their controlling cohomologies of $\mathcal{O}$-operators, {\em Comm. Math. Phys.} 368 (2019), no. 2, 665-700.

\bibitem{U08} K. Uchino, Quantum analogy of Poisson geometry, related dendriform algebras and Rota-Baxter
operators, {\em Lett. Math. Phys.} 85 (2-3) (2008),  91-109.




\bibitem{U10} K. Uchino, Twisting on associative algebras and Rota-Baxter type operators, {\em J. Noncommut. Geom.} 4 (3) (2010) 349-379.

\bibitem{V05} Th. Voronov, Higher derived brackets and homotopy algebras, {\em J. Pure Appl. Algebra} 202 (1-3)
(2005),  133-153.

\bibitem{WZ} K. Wang and G. Zhou, The homotopy theory of Rota-Baxter associative algebras of arbitrary weights, arXiv:2203.02960.


\bibitem{ZGG}  Y. Zhang, X. Gao and L. Guo, Matching Rota-Baxter algebras, matching dendriform algebras and matching
pre-Lie algebras, {\em J. Algebra}  552 (2020), 134-170.
\end{thebibliography}
\end{document}